\numberwithin{equation}{section}
\newtheorem{theorem}{Theorem}[section]
\newtheorem{lemma}[theorem]{Lemma}
\newtheorem{remark}[theorem]{Remark}
\newtheorem{corollary}[theorem]{Corollary}
\begin{document}

\title[Gradient Estimates Under Integral Ricci Curvature Bounds]{Gradient Estimates For $\Delta u + a(x)u\log u + b(x)u = 0$ and its Parabolic Counterpart Under Integral Ricci Curvature Bounds}
\author{Jie Wang}
\address{Institute of Mathematics, Academy of Mathematics and Systems Science, Chinese Academy of Sciences, Beijing 100190, China.}
\email{wangjie9math@163.com}
\author{Youde Wang*}
\address{1. School of Mathematics and Information Sciences, Guangzhou University; 2. Hua Loo-Keng Key Laboratory of Mathematics, Institute of Mathematics, Academy of Mathematics and Systems Science, Chinese Academy of Sciences, Beijing 100190, China; 3. School of Mathematical Sciences, University of Chinese Academy of Sciences, Beijing 100049, China.}
\email{wyd@math.ac.cn}

\thanks{*Corresponding Author}

\begin{abstract}
In this paper, we consider a class of important nonlinear elliptic equations
$$\Delta u + a(x)u\log u + b(x)u = 0$$
on a collapsed complete Riemannian manifold and its parabolic counterpart under integral curvature conditions, where $a(x)$ and $b(x)$ are two $C^1$-smooth real functions. Some new local gradient estimates for positive solutions to these equations are derived by Moser's iteration provided that the integral Ricci curvature is small enough. Especially, some classical results are extended by our estimates and a few interesting corollaries are given. Furthermore, some global gradient estimates are also established under certain geometric conditions. Some estimates obtained in this paper play an important role in a recent paper by Y. Ma and B. Wang \cite{MWW}, which extended and improved the main results due to B. Wang \cite{WB2} to the case of integral Ricci curvature bounds.
\end{abstract}
\keywords{gradient estimate; nonlinear equations; local Sobolev inequality; Moser's iteration}
\maketitle

\vspace{1cm}
\tableofcontents

\newpage
\section{\textbf{Introduction}}
Let $(M,g)$ be a complete Riemannian manifold of dimension $n \geq 2$. Gradient estimate is an essential tool in understanding solutions of nonlinear partial differential equations from geometry. Yau \cite{Yau} and Li-Yau \cite{LY} proved the well-known gradient estimates for Laplace equation $\Delta u=0$ and heat equation $u_t=\Delta u$ defined on $(M, g)$ with Ricci curvature bounded from below, respectively. It is well-known that, by utilizing the gradient estimates of the heat equation, one can obtain Harnack inequalities, the upper bound and the lower bound for the heat kernel, eigenvalue estimate and the lower bound of Green's function on Riemannian manifolds under various geometric conditions on $(M, g)$. The Harnack estimate also plays important role on the Ricci flow (see \cite{H*}).

In this paper, we are concerned with the gradient estimates and related issue for positive solution to following
\begin{equation}\label{equ}
\Delta u+a(x)u \log u+b(x)u=0,
\end{equation}
which is defined on a Riemannian manifold, and its parabolic counterpart
\begin{equation}\label{equ*}
\left( \Delta-\frac{\partial}{\partial t}\right) u+a(x,t)u\log u+b(x,t)u=0.
\end{equation}
In (\ref{equ}), $a(x)$ and $b(x)$ is $C^{1}$-smooth  on $M$, and in (\ref{equ*}), both $ a(x,t) $ and $ b(x,t) $ are $ C^{2} $ with respect to $x\in M$ while are $ C^{1} $ with respect to the time $t$.

The above equation (\ref{equ}) is the close relative of the Euler-Lagrange equations associated with Log Sobolev functional and $\mathcal{W}$-entropy which is a fundamental quantity for Ricci flow introduced by Perelman \cite{P}. Indeed, Perelman has ever made use of the existence of extremals of his $\mathcal{W}$-entropy to prove a no breather theorem stating that shrinking breathers of Ricci flows on compact manifolds are shrinking gradient solitons.

The following definition is one of several equivalent ways in which Perelman's $\mathcal{W}$-entropy on $(M, g)$ can be written. Let $v\in W^{1,2}(M)$ and $\tau>0$ be a parameter. The $\mathcal{W}$-entropy is the quantity
$$W(g,v,\tau) \equiv \int_M\{\tau(4|\nabla v|^2 + Rv^2) - v^2\ln v^2 - \frac{n}{2}(\ln 4\pi\tau)v^2 - nv^2\}dg.$$
Here $n=\dim(M)$ and $R$ is the scalar curvature of $(M, g)$. Let $c > 0$ be a positive constant, it is clear that the $\mathcal{W}$-entropy has the following scaling invariant property
$$\mathcal{W}(cg, c^{-n/4}v, c\tau) = \mathcal{W}(g, v, \tau).$$
Hence we can always take $\tau = 1$ if necessary. If $\tau = 1$ and $\|v\|_{L^2(M)} = 1$, then
$$\mathcal{W}(g,v,\tau) =  \int_M\{(4|\nabla v|^2 + Rv^2) - v^2\ln v^2\}dg - \frac{n}{2}(\ln 4\pi) - n.$$
This means that
$$\mathcal{W}(g,v,\tau) \equiv L(v,g) - \frac{n}{2}(\ln 4\pi) - n.$$
where $$L(v,g)\equiv \int_M\{(4|\nabla v|^2 + Rv^2) - v^2\ln v^2\}dg$$ is the Log Sobolev functional perturbed by the scalar curvature of the manifold $M$. Therefore, the $\mathcal{W}$-entropy and the Log Sobolev functional differ only by a normalizing constant after scaling.
In other words, the $\mathcal{W}$-entropy is just the Log Sobolev functional scaled with certain time dependent parameter. One has known the study of the analytic properties and behaviors of $\mathcal{W}$-entropy's Euler-Lagrange equation is also crucial for understanding deeply the geometry of $(M, g)$, for instance, B. Wang \cite{Wb, WB2} has ever studied the properties and behaviors of Ricci flow via Perelman's local entropy and obtained a series of important results.

For the Log-Sobolev functional, the existence problem of extremal functions in the compact case was solved by O. Rothaus \cite{Ro} 40 years ago.  Later, F. Chung and S.-T. Yau \cite{Ch-Y} also showed that if $M$ is a compact Riemannian manifold of non-positive curvature, then the function $u$ achieving the log-Sobolev constant satisfies a interesting logarithmic Harnack inequality and then used this to prove a lower bound for the log-Sobolev constant of a manifold of non-positive curvature. Now, let us recall the definition of Log-Sobolev functional defined in \cite{Ro}. Let $H$ be a non-negative measurable function on a bounded domain $\Omega\subset M$, for which $\ln H \in L^p$ ($p>\frac{n}{2}$). Let $\rho$ be a positive real number and define $a_\rho(H)$ as the infimum of
$$A_\rho(H)\equiv\int_{\Omega}(\rho|\nabla f|^2 -f^2\ln f^2 + f^2\ln H)d\Omega,$$
for $f \in H^1_0(\Omega)$, subject to the restriction $\int_\Omega f^2=1$. Here, the integral is well defined, since $f \in H^1_0(\Omega)\subset L^{\frac{2n}{n-2}}$. It is easy to see that the minimizer of the constrained variational problem satisfies the following Euler-Lagrange equation
$$\rho\Delta f + f \ln f^2 -f \ln H + a_\rho(H)f = 0.$$
Obviously, the above equation is just a special case of (\ref{equ}).

However, in the noncompact case, the problem is wide open. If $M$ be a complete, connected noncompact manifold with bounded geometry, Zhang \cite{Z*} proved that the Log Sobolev functional $L(v,g)$ has an extremal function decaying exponentially near infinity under a condition near infinity of $(M, g)$. We should mention that, if one drops the connectedness, then it is easy to construct a manifold with infinitely many disconnected components, such that the Log Sobolev functional does not have an extremal. See the example at the beginning of Section 3 in \cite{Z*}.

It is worth to point out that, in addition to being an interesting problem in its own right, the study of Log Sobolev inequality or $\mathcal{W}$-entropy in the noncompact setting is also important to Ricci flow. One reason is that many of the more interesting singularity models are noncompact, even when the Ricci flow under consideration is compact. One such example in the three dimensional case is the round neck $\mathbb{S}^2 \times \mathbb{R}$, which is a typical singularity model. On the other hand, in the case $(M,g)$ is a noncompact gradient shrinking solution, Carrillo and Ni \cite{CN} proved that potential functions are extremals for $\mathcal{W}$-entropy.

In fact, many mathematicians have payed attention to studying the following nonlinear elliptic equation
$$\Delta u + au\log u+bu=0$$
defined on a complete Riemannian manifold $(M, g)$ and its corresponding parabolic equation since the existence problem of extremal functions for the Log-Sobolev functional was solved in the compact case. For example, L. Ma in \cite{Ma} studied the gradient estimates of the positive solutions to the following
$$\Delta u+ au\log u =0 \quad\mbox{in}\,\, M$$
where $(M, g)$ is a complete and $\dim(M)\geq 3$, and obtained some gradient bounds for the case $a<0$ is a constant. Yang \cite{Yang} considered the following
$$\Delta u(x) +au \log u + bu= 0 \quad \mbox{on} \,\, M$$
where $a$ and $b$ are two real numbers, and improves the estimate of \cite{Ma} and extends it to the case $a>0$ and $M$ is of any dimension. Moreover, in \cite{Yang} and \cite{C-C*} they studied
$$u_t=\Delta u(x,t) +au(x,t) \log u(x,t) + bu(x,t) \quad \mbox{on} \,\, M$$
and derived a local gradient estimate for the positive solution of the parabolic equation defined on complete noncompact manifolds with a fixed metric and curvature locally bounded below. For more results on gradient estimates of some related nonlinear equations, we refer to \cite{HM, LY, ML, PWW, PWW1, Wj} and references therein. Note that the above gradient estimates were proved provided the Ricci curvature is bounded from below and the main tool is maximum principle.

On the other hand, there have been some results on gradient estimates under assumptions on integral Ricci curvature bounds(see definitions below) since P. Petersen and G.F. Wei in \cite{PW2} introduced the refined concept on integral Ricci curvature condition. For the gradient estimates, at the very beginning P. Petersen and G.F. Wei in Theorem 3.3 of \cite{PW2} proved a type of gradient estimate for positive harmonic functions on a geodesic ball $B(x,r)$ under some integral Ricci curvature bounds and noncollapsed $M$. In 2018, Dai, Wei and Zhang in \cite{DWZ} obtained the above result for positive harmonic functions without assuming $M$ is non-collapsed by proving a powerful local Sobolev embedding inequality. Q.S. Zhang and M. Zhu \cite{ZZ, ZZ*} also derived a Li-Yau type gradient estimate for the heat kernel of a heat equation under the integral Ricci curvature bounds introduced by Petersen and Wei \cite{PW1, PW2}. Recently, W. Wang in \cite{W} generalizes the gradient estimates for the heat equation $u_t=\Delta u$ in \cite{ZZ, ZZ*} to the case $u_t=\Delta u + au\log u$ where $a$ is a constant.

It is worthy to point out that the local Sobolev inequality and the corresponding De Giorgi-Nash-Moser iteration plays the crucial role on their proofs of the above gradient estimates. For earlier work, we refer to W. Ding and Y. Wang \cite{DiW}, Y. Wang \cite{Wy} and D. Yang \cite{Yd}. For instance, Y. Wang \cite{Wy} has ever shown that a complete manifold is of at least Euclidean volume growth (and hence non-collapsing) if its Sobolev constant is of a positive lower bound.

For more generality, one also studied the gradient estimates of the positive solutions to the nonlinear elliptic equation with variable coefficients (\ref{equ}). Especially, J. Wang in \cite{Wj} has ever derived the gradient estimates for the positive solutions to (\ref{equ}) and (\ref{equ*}) with variable coefficients $a(x,t)$ and $b(x,t)$ if the Ricci curvature is bounded from below.
\medskip

In this paper, we focus on the bound estimates and gradient estimates of the positive solutions to the above (\ref{equ}) and (\ref{equ*}) under integral Ricci curvature bounds. In order to state our results we need to introduce some notions and notations.

In the following, we use $B(x,r)$ or $B_r$ and $\left| B(x, r)\right|$ or $\left| B_r\right| $ to denote the geodesic ball with radius $r$ in $M$ centered at $x$ and its volume, respectively. For any $x\in M$, let $\rho(x)$ be the smallest eigenvalue for the Ricci tensor $Ric:T_xM\rightarrow T_xM$ and $Ric^-$ denote $\max\left\lbrace0,-\rho(x)\right\rbrace $. Then following \cite{DWZ}, for $p,r>0$, we define
$$\kappa(x,p,r)=r^2\left( \fint_{B(x,r)}\left| Ric^-\right|^p \right)^{\frac{1}{p}} ,\quad \quad \kappa(p,r)=\sup\limits_{x\in M}\kappa(x,p,r)$$
and
$$\left| \left| f\right| \right|^*_{p,B(x,r)}= \left( \fint_{B(x,r)}\left| f\right|^p \right)^{\frac{1}{p}}$$ for functions on $M$. Furthermore, when $p\geq1$, it's well-known that the norm $\left| \left| f\right| \right|^*_{p,B(x,r)}$ is non-decreasing in $p$ for fixed $f$ and $B(x,r)$. Since $\left| \left| Ric^-\right| \right|^*_{p,B(x,r)}=0\Longleftrightarrow Ric_M\geq0$, so we always assume $\left| \left| Ric^-\right| \right|^*_{p,B(x,r)}>0$ throughout this paper.

Throughout this paper, let the symbol
$$(f)_+=\max\left\lbrace 0,\, f(x)\right\rbrace , $$
and
$$(g) ^{+}\equiv\sup\limits_{(x,t)\in B(x,1)\times(0,\infty)}\max(g(x,t),\,0).$$

Inspired by the results in \cite{DWZ}, we want to extend the gradient estimates for positive harmonic functions in \cite{DWZ} to the case (\ref{equ}) with non-vanishing coefficients $a(x)$ and $b(x)$ and obtain
\begin{theorem}\label{th1.2}
Let $(M, g)$ be a complete Riemannian manifold, $a,b,\left|\nabla a \right|$ and $\left| \nabla b\right|$ be some bounded functions on $B(x,r)\subset M$ for $0<r\leq1$, i.e., there exist some positive constants $D_1,\,D_2,\, D_3,\, D_4$ such that $$\sup\limits_{B(x,r)}\left| a\right| =D_1,\quad\hspace*{0.3em}\sup\limits_{B(x,r)}\left| b\right| =D_2,\quad\hspace*{0.3em}\sup\limits_{B(x,r)}\left|\nabla a \right|^2=D_3,\quad\hspace*{0.3em}\sup\limits_{B(x,r)}\left|\nabla b \right|^2=D_4.$$ Assume that $u$ is a positive smooth solution to (\ref{equ}) and set $h=\log u$. If $D_5=\left| \left| h\right| \right|^*_{p,B(x,r)}<+\infty$ for $n\geq3$ and $p>\frac{n}{2}$ or $n=2$ and $p>\frac{3}{2}$, then, there exist two constants $C>0$ and $k=k(n,p)>0$ such that if $\kappa(p,1)\leq k$, there holds true
\begin{equation}\label{1.5}
\sup\limits_{B\left( x,\frac{r}{2}\right) }\frac{\left| \nabla u\right|^2}{u^2}\leq C,
\end{equation}
where $C=C\left( n,p,D_1,D_2,D_3,D_4,D_5,\kappa(p,r),C_S,r^{-2}\right)>0$.
Especially, if $a(x)\equiv 0$, the constant $C$ in (\ref{1.5}) is not relevant to $D_1$, $D_3$ and $D_5$; if $a\geq0,\,b\geq0$ on $B_r$, $C$ does not depend on $D_2$ and $D_4$. Here $C_S$ is the local Sobolev constant (see Section \ref{sec2}).
\end{theorem}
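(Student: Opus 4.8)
The plan is to pass to $h=\log u$, to derive a Bochner-type differential inequality for $w=|\nabla h|^2=|\nabla u|^2/u^2$ whose dominant term is $\tfrac1n w^2$, and then to run a De~Giorgi--Nash--Moser iteration built on the local Sobolev inequality of Section~\ref{sec2}; the smallness of $\kappa(p,1)$ will be exactly what is needed to absorb the curvature contribution. First I would record that, since $\nabla u=u\nabla h$ and $\Delta u=u(\Delta h+|\nabla h|^2)$, equation~\eqref{equ} is equivalent to $\Delta h+|\nabla h|^2+a(x)h+b(x)=0$, so that $\Delta h=-w-ah-b$ and $\nabla\Delta h=-\nabla w-h\nabla a-a\nabla h-\nabla b$; it then suffices to bound $\sup_{B(x,r/2)}w$.

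Next I would apply the Bochner formula to $w$, use $|\nabla^2 h|^2\ge\tfrac1n(\Delta h)^2=\tfrac1n(w+ah+b)^2$ and $\mathrm{Ric}(\nabla h,\nabla h)\ge-\mathrm{Ric}^-\,w$, and substitute the two identities above. Expanding $(w+ah+b)^2\ge w^2-2w(D_1|h|+D_2)$ and bounding the remaining terms by Cauchy--Schwarz (for instance $|h\langle\nabla h,\nabla a\rangle|\le\sqrt{D_3}\,|h|\,w^{1/2}$ and $|\langle\nabla h,\nabla b\rangle|\le\sqrt{D_4}\,w^{1/2}$) yields a pointwise inequality of the form
\[
\Delta w+2\langle\nabla h,\nabla w\rangle\;\ge\;\tfrac1n w^2-2\,\mathrm{Ric}^-\,w-\Phi ,
\]
where $\Phi$ is a sum of terms each bounded by a power of $w$ strictly smaller than $2$ times a coefficient built from $D_1,\dots,D_4$, with the $\log u$-factors entering only linearly in $|h|$; all of these are of lower order in $w$ than $\tfrac1n w^2$, and the $|h|$-factors will be absorbed later by Hölder's inequality and the bound $D_5$. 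When $a\equiv0$ every term containing $h$ or $\nabla a$ disappears, and when $a,b\ge0$ the favourable sign of the $b$- and $\nabla b$-contributions lets one discard them at the appropriate steps; this is the source of the claimed reductions in the dependence of $C$.

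Then comes the iteration. I would multiply the displayed inequality by $\psi^2 w^{\beta}$ for a cutoff $\psi$ supported in $B(x,r)$ and $\beta\ge\beta_0(n,p)$, integrate, and integrate by parts: the term $\int\psi^2 w^\beta\Delta w$ supplies, after integration by parts, a favourable Dirichlet term in $|\nabla w|^2$, while $\int\psi^2 w^\beta\langle\nabla h,\nabla w\rangle$ is, since $|\nabla h|=w^{1/2}$, dominated by Young's inequality by a fraction of that Dirichlet term together with a term that, for $\beta$ large, is a small multiple of the good quantity $\tfrac1n\int\psi^2 w^{\beta+2}$. The crucial point is the curvature term $\int\psi^2\,\mathrm{Ric}^-w^{\beta+1}$: Hölder's inequality with exponents $p$ and $p/(p-1)$ together with $p>\tfrac n2$ place $\psi\,w^{(\beta+1)/2}$ in an $L^s$-space with $2<s<$ the Sobolev exponent, so interpolation and the local Sobolev inequality of Section~\ref{sec2} bound this term by $\varepsilon\,\|\nabla(\psi\,w^{(\beta+1)/2})\|_2^2+C(\varepsilon)\,\|\psi\,w^{(\beta+1)/2}\|_2^2$ with $\varepsilon\to0$ as the threshold $k=k(n,p)$ shrinks; the first summand is then absorbed into the Dirichlet term. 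The pieces of $\Phi$ carrying a factor $|h|$ are handled the same way, Hölder against $\|h\|^*_{p,B(x,r)}=D_5$ followed by the Sobolev inequality. This produces a reverse-Hölder step $\|w\|^*_{\chi q,\,B_{r''}}\le A\,\|w\|^*_{q,\,B_{r'}}$ with $\chi=\chi(n,p)>1$, $A$ depending on the data, valid for all $q\ge\beta_0+1$. Combining it with a base estimate — the $L^1$-bound $\|w\|^*_{1,B(x,3r/4)}\le C$ read off by integrating the equation for $h$ against a cutoff and using $D_1,D_2,D_5$ and the volume-doubling that $\kappa(p,1)\le k$ provides, then bridged up to $L^{\beta_0+1}$ using the superlinear term $\tfrac1n w^2$ — and iterating over a shrinking sequence of radii from $3r/4$ down to $r/2$ yields $\sup_{B(x,r/2)}w\le C$ with the asserted dependence.

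The main obstacle is the interplay of two features. The integral-curvature term forces one to replace a pointwise Ricci bound by the local Sobolev inequality of Section~\ref{sec2} and to track carefully the absorption threshold $k$; and the genuinely new zeroth- and first-order terms $a h,\ b,\ h\nabla a,\ \nabla b$ coming from the variable coefficients carry $\log u$-factors that are only known to lie in $L^p$. Balancing all of these against the single good term $\tfrac1n w^2$ by Young's and Hölder's inequalities, while keeping honest bookkeeping of how $D_1,\dots,D_5$ enter each estimate so as to read off the two special-case statements, is where the care is required.
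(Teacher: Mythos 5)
Your overall architecture matches the paper's: pass to $h=\log u$, derive a Bochner inequality for $|\nabla h|^2$ with leading term $\tfrac{c}{n}|\nabla h|^4$, multiply by $\psi^2 w^\beta$ and integrate, use the local Sobolev inequality of Section~\ref{sec2} together with H\"older/interpolation/Young to absorb the $\mathrm{Ric}^-$ and $|h|$ contributions when $\kappa(p,1)$ is small, and iterate. The bookkeeping for the two special cases is also correctly located. (One harmless deviation: the paper shifts to $v=|\nabla h|^2+N$ with $N=\sup_{B_1}|b|+1$ so that $v\ge1$, which lets it convert the stray lower-order powers $v^l$ into $v^{l+1}$; you will need some device of this kind. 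The paper also handles $\int\eta^2 v^l\langle\nabla h,\nabla v\rangle$ by Green's formula and the equation rather than by Young, but both routes yield the same structural conclusion for $\beta$ large.)

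There is, however, one genuine gap, in your bridging step from the $L^1$ base estimate to $L^{\beta_0+1}$. You propose to close it \emph{``using the superlinear term $\tfrac1n w^2$,''} but this does not work, precisely because of the drift term $2\langle\nabla h,\nabla w\rangle$. For small exponents this term, after integration by parts and substitution of $\Delta h=-w-ah-b$, produces a contribution $\sim\frac{2}{\beta+1}\int\psi^2 w^{\beta+2}$ (or, via Young against the Dirichlet term, a contribution with coefficient of size $\sim\tfrac1\beta$), and for $\beta+1<2n$ this \emph{exceeds} the coefficient $\tfrac1n$ in front of the good term, so nothing can be absorbed; for $\beta=0$ one even gets $(2-\tfrac1n)\int\psi^2 w^2$ on the wrong side, which is vacuous. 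This is exactly why the reverse-H\"older step only holds for $\beta+1\geq \mu^m\gtrsim 2n$. The paper bridges the gap by a different device: Lemma~\ref{l2.3}, the Li--Schoen refinement, which takes the sup bound $\sup_{B_{\theta R}}v\le (K_1\delta^{-K_2})^{1/t}\|v\|^*_{t,B_{(\theta+\delta)R}}$ already established at level $t=\mu^m$ and, by iterating it over a geometrically shrinking sequence of radii and exploiting $\|v\|^*_t\le (\sup v)^{(t-s)/t}(\|v\|^*_s)^{s/t}$ together with volume doubling, produces $\sup_{B_{(4/5-\tau)R}}v\le C\,\|v\|^*_{1,B_R}$ directly. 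Without this (or an equivalent mechanism) the iteration cannot be closed against the $L^1$ base estimate, so this step needs to be supplied. A secondary omission: if you take the DWZ-style two-fold integration by parts you will pick up $\Delta\psi$, which then needs the Laplacian comparison (\ref{2.1}) under integral Ricci bounds; this is avoidable by a single integration by parts plus Young, but it should be stated either way.
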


In consideration of the monotonicity of the norm $\left| \left| f\right| \right|^*_{p,B(x,r)}$ in $p\geq1$, under stronger assumptions , we can also obtain a different type of gradient estimate and some priori estimates about the positive solutions to equation (\ref{equ}).

\begin{corollary}\label{c1.2'}
Under the same assumptions with $r=1$ as in Theorem \ref{th1.2}, we assume furthermore that $\left\| h\right\|^*_{2p,B(x,1)}=\left\| \log u \right\|^*_{2p,B(x,1)}=D_5'<+\infty,\,D_1\neq0$ and $\sup\limits_{B(x,1)}\left|\Delta a\right| =D_6<+\infty$. Then  there exist two constants $k=k(n, p)>0$ and $C>0$ such that if $\kappa(2p, 1)\leq k$, then
	\begin{equation}\label{1.6*}
		\sup\limits_{B\left( x,\frac{1}{2}\right) }\left( \frac{\left| \nabla u\right|^2}{u^2}+a\log u\right) \leq C,
	\end{equation}
where $C=C\left( n,p,D_1,D_2,D_3,D_4,D_5',D_6,\kappa(2p,1),C_S\right)$. Thus, since $ah\leq C$, there holds true that on $B\left( x,\frac{1}{2}\right) $
	
	(1) if $a\geq A_1$ for some positive constant $A_1$, then
	\begin{equation}\label{1.7*}
		u\leq e^{\frac{C}{A_1}},
	\end{equation}
	
	(2) if $a\leq A_2$ for some negative constant $A_2$, then
	\begin{equation}\label{1.8*}
		u\geq e^{\frac{C}{A_2}}.
	\end{equation}
\end{corollary}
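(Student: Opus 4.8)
The plan is to run the same Moser iteration that proves Theorem~\ref{th1.2}, but applied to the auxiliary quantity
$$w=\frac{|\nabla u|^{2}}{u^{2}}+a\log u=|\nabla h|^{2}+ah,\qquad h:=\log u,$$
so that (\ref{1.6*}) is exactly $\sup_{B(x,1/2)}w\le C$; once this is known, (\ref{1.7*}) and (\ref{1.8*}) follow at once by discarding the nonnegative term $|\nabla h|^{2}$ in (\ref{1.6*}) to get $a\log u\le C$ on $B(x,\tfrac12)$ and dividing by $a$ according to its sign. The starting point is the equation for $h$, obtained from (\ref{equ}) by dividing by $u$ and using $\Delta u=u(\Delta h+|\nabla h|^{2})$: namely $\Delta h+|\nabla h|^{2}+ah+b=0$, i.e. $\Delta h=-w-b$. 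Applying Bochner's formula to $|\nabla h|^{2}$, adding $\Delta(ah)=h\,\Delta a+2\langle\nabla a,\nabla h\rangle+a\,\Delta h$, and substituting $\Delta h=-w-b$, $\nabla\Delta h=-\nabla w-\nabla b$, together with $2|\nabla^{2}h|^{2}\ge\tfrac{2}{n}(\Delta h)^{2}=\tfrac{2}{n}(w+b)^{2}$ and $Ric(\nabla h,\nabla h)\ge-Ric^{-}|\nabla h|^{2}$, I would arrive at a differential inequality of the form
$$\Delta w\ \ge\ \tfrac{2}{n}(w+b)^{2}-2\langle\nabla h,\nabla w\rangle-2\,Ric^{-}|\nabla h|^{2}+h\,\Delta a-aw-ab-2\langle\nabla h,\nabla b\rangle+2\langle\nabla a,\nabla h\rangle .$$
Keeping the first, ``reaction'', term is essential: after integration by parts in the iteration the drift $-2\langle\nabla h,\nabla w\rangle$ produces a term $\tfrac2q\int w_{+}^{q+1}\eta^{2}$ (via $\Delta h=-w-b$), and for large $q$ this is absorbed by the $\tfrac1n\int w_{+}^{q+1}\eta^{2}$ coming from $\tfrac2n(w+b)^{2}\ge\tfrac1n w^{2}-\tfrac2n b^{2}$.

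Next I would bring this to the standard shape $\Delta w\ge-2\langle\nabla h,\nabla w\rangle+\tfrac1n w^{2}-c_{1}|w|-\Phi(x)$. To control the $Ric^{-}|\nabla h|^{2}$ and $h\,\Delta a$ terms I first invoke Theorem~\ref{th1.2} — whose hypotheses hold since $D_{5}=\|h\|^{*}_{p,B(x,1)}\le\|h\|^{*}_{2p,B(x,1)}=D_{5}'$ and $\kappa(p,1)\le\kappa(2p,1)\le k$ — together with a routine covering argument (for $y\in B(x,\tfrac34)$ apply it on $B(y,\tfrac14)\subset B(x,1)$) to get $|\nabla h|^{2}\le C_{0}$ on $B(x,\tfrac34)$, with $C_{0}$ in the admissible list. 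Using also $D_{1},\dots,D_{4},D_{6}$, one then has $c_{1}=c_{1}(n,D_{1})$ and $\Phi=2C_{0}\,Ric^{-}+D_{6}|h|+D_{1}D_{2}+2\sqrt{C_{0}}(\sqrt{D_{3}}+\sqrt{D_{4}})$, whose $L^{p}$-norm over $B(x,1)$ is controlled in terms of $n,p,D_{1},\dots,D_{4},D_{6},D_{5}',\kappa(p,1)$ and $C_{S}$ (the $\|h\|_{L^{p}(B(x,1))}$ part dominated by $\|h\|^{*}_{2p,B(x,1)}$ and $|B(x,1)|$, itself controlled through $C_{S}$ and the volume comparison available under small integral Ricci curvature).

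Then comes the iteration, carried out exactly as for Theorem~\ref{th1.2}: multiply the inequality by $w_{+}^{q-1}\eta^{2}$ with a cut-off $\eta$, integrate by parts, absorb the first-order term using $|\nabla h|\le\sqrt{C_{0}}$ and Young's inequality (its leading boundary contribution having the favourable sign noted above), apply the local Sobolev inequality with constant $C_{S}$, absorb the $Ric^{-}$ term into the gradient term because $\kappa(2p,1)\le k$ is small, and estimate $D_{6}|h|\,w_{+}^{q-1}$ by H\"older against $\|h\|^{*}_{2p}=D_{5}'$ — this last point is exactly why the stronger hypothesis must be on the $2p$-norm of $h$. Letting $q\to\infty$ through a geometric sequence and iterating over balls shrinking from $B(x,\tfrac34)$ to $B(x,\tfrac12)$ yields
$$\sup_{B(x,1/2)}w\ \le\ C\Big(\|w_{+}\|_{L^{2p}(B(x,3/4))}+\|\Phi\|_{L^{p}(B(x,1))}\Big),$$
and the right-hand side is bounded since $\|w_{+}\|_{L^{2p}(B(x,3/4))}\le\||\nabla h|^{2}\|_{L^{2p}}+D_{1}\|h\|_{L^{2p}}$ is controlled (up to $|B(x,1)|$) by $C_{0}$ and $D_{5}'$. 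This gives (\ref{1.6*}), and (\ref{1.7*})–(\ref{1.8*}) follow as indicated.

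I expect the main obstacle to be the bookkeeping in this last step: pushing the extra term $ah$ through the full De~Giorgi–Nash–Moser scheme of Theorem~\ref{th1.2} and checking that every new contribution — the inhomogeneity $h\,\Delta a$, the constants $ab$, $\langle\nabla a,\nabla h\rangle$, $\langle\nabla b,\nabla h\rangle$, and above all the higher-power term $\tfrac2q\int w_{+}^{q+1}\eta^{2}$ generated by integrating the drift by parts — is either absorbed into the Sobolev/gradient term (thanks to the smallness of $\kappa$ and to the good sign of $\tfrac2n(w+b)^{2}$) or dominated by the prescribed quantities; in particular one must verify that $\|h\|^{*}_{2p}<\infty$ (and not merely $\|h\|^{*}_{p}$) together with $\sup|\Delta a|<\infty$ is precisely what is needed to close the estimate.
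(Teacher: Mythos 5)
Your proposal reaches the same conclusion by a genuinely different route. The paper's own proof does \emph{not} first bound $|\nabla h|^{2}$; it Moser-iterates directly on $v'=v\chi_{B'}$ with $v=|\nabla h|^{2}+ah$ and $B'=\{v>1\}$, and in the Bochner step the curvature term splits as $Ric(\nabla h,\nabla h)\ge -R_{c}(v-ah)=-R_{c}v+ahR_{c}$, which produces the product $ahR_{c}$ in the integrand. That product is the entire reason the corollary asks for $\|h\|^{*}_{2p}$ and $\kappa(2p,1)$ instead of the $p$-level hypotheses of Theorem~\ref{th1.2}: it is absorbed by H\"older as $\|aR_{c}h\|^{*}_{p}\le D_{1}\|R_{c}\|^{*}_{2p}\|h\|^{*}_{2p}$, see~\eqref{2.41}. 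Your reduction --- first establishing $|\nabla h|^{2}\le C_{0}$ on $B(x,\tfrac34)$ from Theorem~\ref{th1.2} plus a covering, then bounding $R_{c}|\nabla h|^{2}\le C_{0}R_{c}$ pointwise --- eliminates that product altogether, which decouples the difficulty cleanly. One consequence you seem not to have noticed: once this is done the $2p$-norm hypotheses are no longer forced. The term $D_{6}|h|$ in your $\Phi$ is handled by the $p$-norm of $h$ exactly as the $|h|$-term is handled in~\eqref{2.22} of the proof of Theorem~\ref{th1.2}; $R_{c}$ only needs to lie in $L^{p}$; and if you iterate down to $\|w_{+}\|^{*}_{1}$ via Lemma~\ref{l2.3} as the paper does (rather than stopping at $L^{2p}$), the starting norm $\|w_{+}\|^{*}_{1}\le C_{0}+D_{1}\|h\|^{*}_{1}$ is controlled by the $p$-norm as well. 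So your closing diagnosis --- that the $D_{6}|h|$ estimate is ``exactly why'' the $2p$-norm is needed --- is not right: the $2p$-level enters in the paper only through the $ahR_{c}$ product that your scheme sidesteps. That is a virtue of your route, not a gap. Both proofs rely on the same truncation device (your $w_{+}$ versus the paper's $v\chi_{B'}$), and the passage from~\eqref{1.6*} to~\eqref{1.7*} and~\eqref{1.8*} is the same trivial observation in both.
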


Moreover, under some specific conditions, we have the following result which does not depend on $D_5$.
\begin{corollary}\label{c1.1}
Under the same assumptions as in Theorem \ref{th1.2}, we assume furthermore that $u\leq D$ for some positive constant $D$ is a bounded positive smooth solution to (\ref{equ}), $a\leq0$ is constant, $b$ and $\left| \nabla b\right|$ are bounded on $B(x,r)\subset M$ for $0<r\leq1$. Then, for $n\geq3$ and $p>\frac{n}{2}$ or $n=2$ and $p>\frac{3}{2}$, there exist two constants $C>0$ and $k=k(n,p)>0$ such that, if $\kappa(p,1)\leq k$ there holds true
	\begin{equation}\label{1.7'}
	\sup\limits_{B\left( x,\frac{r}{2}\right) } \frac{\left| \nabla u\right|^2}{u^2}\leq C,
	\end{equation}
where $C=C\left( n,p,D_1,D_2,D_4,D,\kappa(p,r),C_S,r^{-2}\right)>0$. Especially, if $a\log D+b\geq0$ on $B_r$, $C$ does not depend on $D_2$, $D_4$ and $D$.
\end{corollary}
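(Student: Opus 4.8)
The plan is to re-run the Bochner-formula plus De Giorgi--Nash--Moser iteration that establishes Theorem \ref{th1.2}, observing that the two extra hypotheses — $a$ a nonpositive \emph{constant} and $u\le D$ — are exactly what makes $D_5=\left\|\log u\right\|^*_{p,B(x,r)}$ disappear from the constant. (One cannot dispose of $D_5$ by bounding it outright: $u\le D$ controls $\log u$ from above, but there is no a priori pointwise lower bound for $u$ on $B\left(x,\frac r2\right)$ in terms of the listed data, so the improvement has to be produced inside the iteration.)

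Put $h=\log u$ and $f=|\nabla h|^2=|\nabla u|^2/u^2$; from (\ref{equ}), $\Delta h=-f-ah-b$. Apply Bochner's formula to $f$, substitute $\nabla\Delta h=-\nabla f-a\nabla h-\nabla b$ — here $\nabla a\equiv0$ because $a$ is constant, which already removes the term $-h\langle\nabla a,\nabla h\rangle$ that was one source of the $D_5$-dependence in Theorem \ref{th1.2} — and use $|\nabla^2 h|^2\ge\frac1n(\Delta h)^2=\frac1n(f+ah+b)^2$ together with $Ric(\nabla h,\nabla h)\ge-Ric^{-}f$. One gets, schematically,
\begin{equation*}
\tfrac12\Delta f+\langle\nabla h,\nabla f\rangle\ \ge\ \tfrac1n f^2+\tfrac2n f(ah+b)-af-\langle\nabla b,\nabla h\rangle-Ric^{-}f .
\end{equation*}
Every remaining occurrence of $h$ sits in a product $(\text{nonnegative})\cdot ah$. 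Write $h=h_+-h_-$ with $h_-=\max\{-h,0\}\ge0$; since $u\le D$ we have $h_+\le(\log D)_+$, so, because $a\le0$, the $h_+$-contribution is at least $-D_1(\log D)_+$ times the nonnegative prefactor, while the $h_-$-contribution has the favourable sign $-ah_-\ge0$ and is discarded. Likewise $-af\ge0$ is discarded, $|b|\le D_2$ is absorbed, and $-\langle\nabla b,\nabla h\rangle$ is estimated by Cauchy--Schwarz via $D_4$. This yields, on $B(x,r)$,
\begin{equation*}
\tfrac12\Delta f+\langle\nabla h,\nabla f\rangle\ \ge\ \tfrac1n f^2-C_0 f-C_1-Ric^{-}f ,
\end{equation*}
with $C_0=C_0(n,D_1,D_2,D)$ and $C_1=C_1(D_4)$ — and no $D_5$.

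From here the iteration runs word-for-word as in the proof of Theorem \ref{th1.2}. First, integrating $\Delta h=-f-ah-b$ against a cutoff and splitting $h=h_+-h_-$ as above gives an $L^1$ bound $\int_{B(x,r/2)}f\le C\left(r^{-2}+D_1|\log D|+D_2\right)|B_r|$, again free of $D_5$. Then one multiplies the differential inequality by $\varphi^2 f^{q}$ for a cutoff $\varphi$ with $\varphi\equiv1$ on $B\left(x,\frac r2\right)$ and $|\nabla\varphi|\le C/r$, integrates over $B(x,r)$, integrates by parts (the drift term $\langle\nabla h,\nabla f\rangle$ contributes, via $\Delta h=-f-ah-b$, an extra nonnegative higher power of $f$ plus further $ah$-terms, handled exactly as above), applies the local Sobolev inequality with constant $C_S$ — legitimate because $\kappa(p,1)\le k(n,p)$ — to raise the integrability exponent, and iterates to bound $\sup_{B(x,r/2)}f$ by the $L^1$ bound just obtained; this is (\ref{1.7'}), with the stated dependence of $C$. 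The final assertion, that $C$ may be taken independent of $D_2$, $D_4$, $D$ when $a\log D+b\ge0$ on $B_r$, follows because then $a\log u+b\ge a\log D+b\ge0$ on $B_r$, so the whole term $\frac2n f(a\log u+b)=\frac2n f(ah+b)$, and the analogous term appearing after integration by parts, is nonnegative and can simply be discarded — the same mechanism by which the refinements in Theorem \ref{th1.2} are obtained. The only real difficulty is the bookkeeping: one must verify that \emph{every} appearance of $h=\log u$ — in the Bochner inequality and after each integration by parts in the iteration — is either favourably signed (hence discarded) or controlled through $h_+\le\log D$; granting that, the iteration itself is routine and identical to that of Theorem \ref{th1.2}.
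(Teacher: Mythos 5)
The plan is sound up to the Bochner step, but there is a genuine gap in the treatment of the drift term $\langle\nabla h,\nabla f\rangle$ during the Moser iteration, and it is precisely the place where the paper's proof introduces a different, crucial device.

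After multiplying the differential inequality by $\eta^2 f^l$ and integrating, you need an \emph{upper} bound on $\int\eta^2 f^l\langle\nabla h,\nabla f\rangle$ (since this term is moved to the other side with a minus sign before applying the Sobolev inequality). Integration by parts and $\Delta h=-f-ah-b$ give
\begin{equation*}
\int\eta^2 f^l\langle\nabla h,\nabla f\rangle
=\frac{1}{l+1}\int\eta^2 f^{l+1}(f+ah+b)-\frac{2}{l+1}\int\eta f^{l+1}\langle\nabla\eta,\nabla h\rangle.
\end{equation*}
The term $\frac{1}{l+1}\int\eta^2 f^{l+1}\,ah$ must therefore be bounded \emph{above}. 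Splitting $h=h_+-h_-$ gives $ah=ah_+-ah_-$; with $a\le 0$ the piece $ah_+\le 0$ is harmless, but $-ah_-\ge 0$ is exactly the wrong sign here and is unbounded because there is no lower bound on $u$. So the $-ah_-$ piece cannot be ``discarded'' — in the Bochner inequality you wanted a \emph{lower} bound and $-ah_-\ge 0$ helped, but at this step you want an \emph{upper} bound and it hurts. Your claim that the $ah$-terms arising from the integration by parts are ``handled exactly as above'' conflates the two opposite signs. In short, keeping $f=|\nabla h|^2$ leaves behind a loose $ah$-term in the iteration that the data $D_1,D_2,D_4,D$ do not control.

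The paper avoids this by changing the iterand, not merely the normalization: it sets $h=\log(u/D)\le 0$ so that $ah\ge 0$ (since $a\le 0$), takes $N=\sup_{B_1}|a\log D+b|+1$, and iterates on $v=|\nabla h|^2+ah+N$. Then $\Delta h=N-v-b-a\log D$ has no explicit $h$-dependence, so the integration by parts produces only the absorbable $\int\eta^2 v^{l+2}$ and terms with the bounded coefficient $N-b-a\log D$; moreover $ah,N\ge 0$ gives $|\nabla h|^2\le v$, which controls the $\langle\nabla\eta,\eta\nabla h\rangle$ piece. This absorption of $ah$ into $v$ is the step your argument is missing, and it is the essential reason the constant can be made independent of $D_5$. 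Your Bochner computation, $L^1$ estimate (which does go through, since there $-ah$ needs only an upper bound and $-ah\le |a|(\log D)_+$), and the final remark about the case $a\log D+b\ge 0$ are all fine; the fix is to replace $f$ by the paper's $v$ in the iteration.
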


On the other hand, it's well-known that when $a(x)>0$ is constant, equation (\ref{equ}) is related closely to Perelman's $\mathcal{W}$-entropy and Log-Sobolev functional $\mathcal{L}(u, M, g)$ for $u\in W^{1,2}(M)$, in this situation, $b(x)$ is linked with scalar curvature and the best Log-Sobolev constant. For details about correlative backgrounds, see \cite{RV, Z*}.

The deep connections between the equation (\ref{equ}) with  constant $a(x)\equiv a>0$ and many significant areas in geometry and analysis reveal that this case does matter. Actually, in view of Corollary \ref{c1.1}, we can see that the case of $a\leq0$ is much simpler than $a>0$ in some sense.

For the case of $a>0$, for a solution $u\in W^{1,2}_{loc}(M)$ to (\ref{equ}), Q.S. Zhang in \cite{Z*} obtained the bounds and gradient estimates provided assumptions of bounded geometry and non-collapsing conditions. In the present paper, we can also extend these results to the case of integral Ricci curvature bounds with non-collapsed $(M, g)$.
\begin{theorem}\label{th1.2'}
Let $(M, g)$ be a Riemannian manifold, $u\in W^{1,2}(B_1)$ be a positive solution to (\ref{equ}) and $a>0$ be constant. Also, assume that $\inf\limits_{x\in M}\left| B\left( x, \frac{1}{2}\right) \right|\geq V$ for some constant $V>0$. Then, for $n\geq3$ and $p>\frac{n}{2}$ or $n=2$ and $p>\frac{3}{2}$ there exists a constant $k=k(n,p)>0$ such that, when $\kappa(p,1)\leq k$, we have
	
	(1) if $\left| \left|(b)_+ \right| \right|^*_{p, B_1}<+\infty$, there holds
	\begin{equation}\label{1.8**}
	\sup\limits_{B\left( x,\frac{1}{2}\right) } u\leq D,
	\end{equation}
	where $D=D\left( n,p,a,V,C_S,\left| \left|(b)_+ \right| \right|^*_{p, B_1},\left| \left| u\right| \right|_{2,B_1} \right)$ is some positive constant;
	
	(2) if $\left| \left|\left| \nabla b\right|^2  \right| \right|^*_{p, B_{1}}<+\infty$ and $\left| \left|(b)_+ \right| \right|^*_{p, B_1}<+\infty$, there holds
	\begin{equation}\label{1.9**}
	\sup\limits_{B\left( x,\frac{1}{2}\right) } \left|\nabla u\right|^2 \leq C,
	\end{equation}
	 where $C=C\left( n,p,a,V,\kappa(p,1),C_S,\left| \left|(b)_+\right| \right|^*_{p, B_1},\left| \left|\left| \nabla b\right|^2  \right| \right|^*_{p, B_{1}},\left| \left| u\right| \right|_{W^{1,2}(B_1)} \right)$ is some positive constant.
\end{theorem}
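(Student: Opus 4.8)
The plan is to prove both (1) and (2) by De Giorgi--Nash--Moser iteration based on the local Sobolev inequality of \cite{DWZ}, which holds on $B_1$ (with constant $C_S$) as soon as $\kappa(p,1)\le k(n,p)$ is small enough. Since $\kappa(p,1)$ and all the averaged norms $\|\cdot\|^*_{p,B_1}$ are scale invariant, the whole iteration can be carried out with averaged integrals over the fixed ball $B_1$, so no volume comparison between different balls is needed; the non-collapsing bound $V$ enters only at the very end, to pass from the averaged norms $\|u\|^*_{2,B_1}$ and $\|\,|\nabla u|^2\,\|^*_{1,B_1}$ to the un-normalized quantities $\|u\|_{2,B_1}$, $\|u\|_{W^{1,2}(B_1)}$ appearing in the statement. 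By elliptic regularity $u$ is smooth in $B_1$ (bootstrapping from $u\in W^{1,2}(B_1)\subset L^{2\nu}_{loc}$, using that $a$ is constant and $b\in C^1$), so the computations below are legitimate; here $\nu=\tfrac{n}{n-2}$ when $n\ge3$, while for $n=2$ one uses instead a local Sobolev inequality with an arbitrary fixed large exponent $2\nu$, the argument being identical.

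\emph{Part (1): the $L^\infty$ bound for $u$.} The decisive structural observation is that $a>0$ forces us to work directly with $u$ rather than with $h=\log u$: only $(b)_+$ is under control, and passing to $h$ would reintroduce $-b$ with the bad sign. Writing $-bu=-(b)_+u+(b)_-u\ge-(b)_+u$ and using the elementary inequality $\log t\le\tfrac{1}{\delta}\,t^{\delta}$, valid for all $t>0$ and $\delta>0$ (the left side being $\le0$ for $t\le1$), equation \eqref{equ} yields
$$\Delta u+\tfrac{a}{\delta}\,u^{1+\delta}+(b)_+\,u\ \ge\ 0\qquad\text{on }B_1,$$
where $\delta=\delta(n,p)>0$ is a small fixed number chosen below. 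I would then run Moser iteration along exponents $p_k=2\nu^{k}$ with radii $r_k\downarrow\tfrac{3}{4}$: test against $u^{p_k-1}\phi_k^2$ (first with a truncation $\min(u,N)$, then letting $N\to\infty$, which is legitimate since inductively $u\in L^{p_k}_{loc}$), absorb the cutoff cross term by Cauchy--Schwarz, add a multiple of $\fint_{B_1}u^{p_k}\phi_k^2$ to both sides, and apply the local Sobolev inequality to $u^{p_k/2}\phi_k$, gaining the exponent from $p_k$ to $\nu p_k=p_{k+1}$. The term $\fint(b)_+u^{p_k}\phi_k^2$ is estimated by Hölder through $\|(b)_+\|^*_{p,B_1}$ together with $p'=\tfrac{p}{p-1}<\nu$ (which is exactly where $p>\tfrac{n}{2}$, resp.\ $p>\tfrac{3}{2}$, is used), then interpolated between $L^2$ and $L^{2\nu}$ with the $L^{2\nu}$-part absorbed by Young's inequality. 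The super-linear term $\fint u^{p_k+\delta}\phi_k^2=\fint(u^{p_k/2}\phi_k)^{2}\,u^{\delta}$ is treated by a further Hölder with a fixed exponent $\sigma\in(1,\nu)$: the factor $\fint_{B_{r_k}}u^{\delta\sigma'}$ is bounded by $\bigl(\fint_{B_{r_k}}u^{p_k}\bigr)^{\delta\sigma'/p_k}$ via Jensen — for this we fix $\delta$ small enough that $\delta\sigma'\le2$ — while the factor $\bigl(\fint(u^{p_k/2}\phi_k)^{2\sigma}\bigr)^{1/\sigma}$ is interpolated between $L^2$ and $L^{2\nu}$ and again Young-absorbed. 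This produces the usual recursive inequality, whose only non-standard feature is an extra exponent $1+O(\delta/p_k)$ on the previous-level quantity coming from the $u^{1+\delta}$-term; since $\sum_k\delta/p_k<\infty$ this costs at most a finite power, the product of constants in the iteration converges in the usual way, and one obtains $\sup_{B_{3/4}}u\le C\bigl(\|u\|^*_{2,B_1}\bigr)^{\gamma}$ for some $\gamma=\gamma(n,p)\ge1$. Hence $\sup_{B_{3/4}}u\le D'$ with $D'$ of the form asserted for $D$ (after replacing $\|u\|^*_{2,B_1}$ by $V^{-1/2}\|u\|_{2,B_1}$), and in particular $\sup_{B_{1/2}}u\le D'$, which gives \eqref{1.8**}; the bound on $B_{3/4}$ is retained for the next part.

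\emph{Part (2): the gradient bound.} Assume $u\le D'$ on $B_{3/4}$. From the Bochner formula and $\nabla(\Delta u)=-\bigl((a\log u+b+a)\nabla u+u\nabla b\bigr)$ one gets, for $w=|\nabla u|^2$,
$$\Delta w=2|\nabla^2u|^2-2(a\log u+b+a)\,w-2u\,\nabla b\cdot\nabla u+2\,Ric(\nabla u,\nabla u).$$
Discarding $2|\nabla^2u|^2\ge0$, using $Ric(\nabla u,\nabla u)\ge-Ric^-\,|\nabla u|^2$, the Young inequality $-2u\,\nabla b\cdot\nabla u\ge-w-(D')^2|\nabla b|^2$, and $\log u\le\log D'$ on $B_{3/4}$, we arrive at the subsolution inequality
$$\Delta w+V_+\,w+F\ \ge\ 0\qquad\text{on }B_{3/4},$$
where it suffices to take $V_+=2a(\log D')_++2(b)_++2a+1+2\,Ric^-$ and $F=(D')^2|\nabla b|^2$. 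The crucial point is that for an upper bound via Moser iteration only the \emph{positive} part of the potential intervenes, so $b$ enters through $(b)_+$ only and no lower bound for $u$ is required; then $\|V_+\|^*_{p,B_{3/4}}$ is controlled by $n,p,a,D',\|(b)_+\|^*_{p,B_1},\kappa(p,1)$ and $\|F\|^*_{p,B_{3/4}}$ by $D'$ and $\|\,|\nabla b|^2\,\|^*_{p,B_1}$, both with $p>\tfrac{n}{2}$. The De Giorgi--Nash--Moser local boundedness estimate for subsolutions with inhomogeneous right-hand side — which involves no logarithm at this stage and is routine — then gives
$$\sup_{B_{1/2}}|\nabla u|^2\ \le\ C\bigl(\|\,|\nabla u|^2\,\|^*_{1,B_{3/4}}+\|F\|^*_{p,B_{3/4}}\bigr),$$
and since $\|\,|\nabla u|^2\,\|^*_{1,B_{3/4}}\le V^{-1}\|\nabla u\|^2_{L^2(B_1)}\le V^{-1}\|u\|^2_{W^{1,2}(B_1)}$, this is exactly \eqref{1.9**} with $C$ of the stated form.

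\emph{The main obstacle.} The real difficulty is confined to part (1): the term $a\,u\log u$ with $a>0$ is a genuine obstruction on $\{u>1\}$, and after the logarithmic Young inequality it produces a term $u^{1+\delta}$ lying strictly above the natural scaling of \eqref{equ}, so the Moser scheme does not close by mere absorption of cutoff terms. The heart of the argument is to choose the single exponent $\delta=\delta(n,p)$ small enough that, at each iteration step, $u^{2\beta+\delta}$ can be interpolated between the previous level $u^{2\beta}$ and the current one $u^{2\nu\beta}$ with interpolation weight $O(\delta/\beta)$ on the latter; these weights being summable, the iteration still converges (at the cost that the bound is polynomial, rather than linear, in $\|u\|^*_{2,B_1}$). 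Once $u\le D$ is in hand, part (2) is routine; there the only subtlety, already emphasised, is to retain the structure $\Delta w+V_+w+F\ge0$ with $V_+$ built only from $(b)_+$ and $(\log D')_+$, since neither $(b)_-$ nor a lower bound for $u$ is available.
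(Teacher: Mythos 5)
Your argument is correct and follows essentially the same strategy as the paper, with a small but worth-noting variation in how the iteration is closed in part (1). The shared skeleton is: convert $a\,u\log u$ into a power $u^{1+\delta}$ via $\log t\le\delta^{-1}t^\delta$, run Moser iteration on $u^q$ against the local Sobolev inequality of \cite{DWZ}, treat $(b)_+u^q$ by H\"older through $\|(b)_+\|^*_{p,B_1}$ and interpolation between $L^2$ and $L^{2\nu}$ (this is where $p>\tfrac{n}{2}$, resp.\ $p>\tfrac32$, enters), and in part (2) apply Bochner to $|\nabla u|^2$ to obtain a linear subsolution inequality whose potential is built only from $(b)_+$, $\log D$ and $Ric^-$. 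Where the paper is cleverer: when splitting $\fint u^{2\tilde e}(\eta w)^2$ it uses the H\"older pair $(p,\tfrac{p}{p-1})$ and \emph{fixes} $\tilde e p=1$, so the auxiliary factor collapses to $\bigl(\int u^2\bigr)^{1/p}$, an iteration-independent constant (normalised to $1$), and the recursion stays linear. You instead take a generic $\sigma\in(1,\nu)$ and bound $\fint u^{\delta\sigma'}$ by Jensen against $\bigl(\fint u^{p_k}\bigr)^{\delta\sigma'/p_k}$, which forces the superlinear exponents $1+O(\delta/p_k)$; since $\sum_k\delta/p_k<\infty$ this still converges, but costs you a polynomial rather than linear dependence on $\|u\|^*_{2,B_1}$. (You could recover the paper's linear recursion simply by specialising $\sigma=\tfrac{p}{p-1}$, $\delta=2/p$.) Two further minor points in your favour: in part (2) you keep $D^2|\nabla b|^2$ as an inhomogeneity $F$ instead of folding it into the potential via $v>1$, which is harmless, and you restrict the Bochner step to $B_{3/4}$ (where the supremum bound from part (1) is genuinely available) rather than asserting $u\le D$ on all of $B_1$ by a covering that would need $u\in W^{1,2}$ slightly beyond $B_1$ — your version is actually the cleaner one on that point.
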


However, when $a>0$ is constant and $u$ is only a bounded solution to equation (\ref{equ}), we can not expect to obtain a estimate like (\ref{1.5}) since $D_5$ maybe infinite. But in Proposition 3.1 of \cite{TW}, under the conditions that $\left| B_1\right| $ and $Ric_{M}$ are bounded from below, the authors obtain a type of local H\"{o}lder continuity of $\left| \nabla u\right|$ by using maximum principle to get a estimate of $\left| \nabla u\right|/u$.

Later in \cite{Wb, WB2}, B. Wang studied the properties of Ricci flow via Perelman's local entropy and obtained a series of important results. Especially, in his work \cite{WB2} the gradient estimate in \cite{TW} is indispensable to obtain the crucial improved pseudo-locality theorem. In order to generalize the main results in \cite{WB2} to the case of integral Ricci curvature bounds, we need to obtain an analogous gradient estimate and this is actually our original motivation to study the gradient estimate of the positive solutions to (\ref{equ}). In fact, by similar methods with that to prove Theorem \ref{th1.2}, we can obtain the following estimate under integral Ricci curvature bounds. Utilizing this estimate, Y. Ma and B. Wang \cite{MWW} extended successfully the main results in \cite{WB2} to the case of integral Ricci curvature bounds. For more information about the relations between equation (\ref{equ}) and local entropy, we refer to \cite{MWW, Wb, WB2}.

\begin{theorem}\label{th1.3'}
Assume $a\geq0$ is constant and $0<u\leq D$ is a positive solution to (\ref{equ}) where $D$ is some positive number. If $\left| \left|b\right| \right|^*_{p, B_\lambda}<+\infty$ and $\left| \left|\left| \nabla b\right|^2\right| \right|^*_{p, B_\lambda}<+\infty$, then, for $n\geq3$ and $p>\frac{n}{2}$ or $n=2$ and $p>\frac{3}{2}$, there exists a constant $k=k(n,p)>0$ such that if $\kappa(p,1)\leq k$, the following holds true:

For any $0<\lambda\leq1$ and $q>1$, we have
\begin{equation}\label{1.10''}
\sup\limits_{B\left( x,\frac{\lambda}{2}\right) }\frac{\left|\nabla u\right|}{u^{1-\frac{1}{2q}}} \leq \frac{C}{\lambda}
\end{equation}
where the constant $C=C\left( n,p,q,D,a,C(n),\kappa(p,1),\left| \left|b\right| \right|^*_{p, B_1},\left| \left|\left| \nabla b\right|^2 \right|\right|^*_{p, B_1}\right) $ and $C(n)$ is determined by $C_S$(see Theorem \ref{th1.1}).
\end{theorem}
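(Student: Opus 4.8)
The plan is to follow the proof of Theorem~\ref{th1.2}, the only new device being a weight $u^{1/q}$ that trades the finiteness of $\|\log u\|^*_p$ (unavailable here) against the a priori bound $u\le D$. By the scaling $g\mapsto\lambda^{-2}g$, under which $u$ still solves an equation of type \eqref{equ} with all the relevant parameters ($a$, $\kappa(p,1)$, the norms of $b$ and of $|\nabla b|^2$) bounded by $C(n,p)$ times their original $B_1$-values — this using the relative volume comparison valid when $\kappa(p,1)\le k$ — it suffices to treat $\lambda=1$ and then scale back, the factor $1/\lambda$ reappearing because $|\nabla_{\lambda^{-2}g}u|=\lambda|\nabla_g u|$. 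So fix $\lambda=1$, put $h=\log u\le\log D$ and $F=|\nabla h|^2=|\nabla u|^2/u^2$, and note that
\[
\frac{|\nabla u|}{u^{1-\frac{1}{2q}}}=\bigl(F\,u^{1/q}\bigr)^{1/2},
\]
so the goal is $\sup_{B(x,1/2)}\bigl(F u^{1/q}\bigr)\le C$.

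From \eqref{equ}, and because $a$ is constant, $\Delta h=-(F+ah+b)$ and $\nabla\Delta h=-(\nabla F+a\nabla h+\nabla b)$; Bochner's formula together with $|\mathrm{Hess}\,h|^2\ge\frac1n(\Delta h)^2$, $Ric(\nabla h,\nabla h)\ge-Ric^-F$ and $(\Delta h)^2\ge\frac12F^2-(ah+b)^2$ then gives
\[
\tfrac12\Delta F\ \ge\ \tfrac{1}{2n}F^2-\langle\nabla h,\nabla F\rangle-aF-\langle\nabla h,\nabla b\rangle-Ric^-F-\tfrac1n(ah+b)^2 .
\]
This is exactly the inequality used in Theorem~\ref{th1.2}, where the term $(ah+b)^2=(a\log u+b)^2$ was controlled through $\|\log u\|^*_p$. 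Here I multiply by $u^{1/q}$ and work with $\widetilde F:=Fu^{1/q}=|\nabla u|^2u^{\frac1q-2}$. Since $a\ge0$, $0<u\le D$, and $\sup_{0<t\le D}t^{1/q}|\log t|^m<\infty$ for every $m$, every power of $\log u$ now comes with a factor $u^{1/q}$ and is bounded; moreover the hypothesis $\||\nabla b|^2\|^*_{p,B_1}<\infty$ with $2p>n$ forces $b\in L^\infty(B_1)$ by Sobolev embedding, so $u^{1/q}(ah+b)^2$ is a bounded function, while $u^{1/q}|\langle\nabla h,\nabla b\rangle|\le\tfrac{\epsilon}{2}\widetilde F+C_\epsilon D^{1/q}|\nabla b|^2$ with $|\nabla b|^2\in L^p(B_1)$. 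Carrying out the computation — writing $u^{1/q}\langle\nabla h,\nabla F\rangle=\langle\nabla h,\nabla\widetilde F\rangle-\tfrac1q\widetilde F\,F$, so that the resulting first-order term, when integrated by parts against $\widetilde F^{\beta}\psi^2$, returns through $\Delta h$ a controlled amount of the good quantity $\int\widetilde F^{\beta+2}\psi^2$ (here one uses $u^{-1/q}\ge D^{-1/q}>0$) — one arrives at
\[
\Delta\widetilde F+\langle X,\nabla\widetilde F\rangle\ \ge\ c_0\,\widetilde F^{\,2}u^{-1/q}-c_1\widetilde F-C\,Ric^-\widetilde F-W,\qquad 0\le W\le C\bigl(1+|\nabla b|^2\bigr),
\]
with $c_0=c_0(n,q)>0$ and $|\nabla b|^2\in L^p(B_1)$, $p>n/2$.

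The last step is Moser's iteration for the subsolution $\widetilde F$, performed exactly as in Theorem~\ref{th1.2} and in \cite{DWZ}: test the above against $\widetilde F^{\beta}\psi^2$, integrate by parts, absorb the first-order term and $c_1\widetilde F$ into $c_0\int\widetilde F^{\beta+2}\psi^2$ plus lower-order terms via Young's inequality, absorb $\int Ric^-\widetilde F^{\beta+1}\psi^2$ via H\"older's inequality using $\kappa(p,1)\le k$, and apply the local Sobolev inequality of Theorem~\ref{th1.1} (available since $\kappa(p,1)\le k$, with constant the $C(n)$ of the statement) to get a reverse-H\"older inequality $\|\widetilde F\|_{L^{\chi\beta}(B_{r'})}\le(C(r-r')^{-2})^{1/\beta}\|\widetilde F\|_{L^{\beta}(B_r)}+(\text{lower order from }W)$ for some $\chi=\chi(n,p)>1$; iterating $\beta\to\infty$ bounds $\sup_{B(x,1/2)}\widetilde F$ by $C(\|\widetilde F\|_{L^1(B_{3/4})}+\|W\|_{L^p(B_1)})$. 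The initial bound $\|\widetilde F\|_{L^1(B_{3/4})}\le C$ follows by testing \eqref{equ} itself against $u^{\frac1q-1}\psi^2$: since $\frac1q-1<0$, the coefficient of $\int|\nabla u|^2u^{\frac1q-2}\psi^2$ is positive, while $au^{1/q}\log u$ is bounded below (as $a\ge0$, $u\le D$) and $\int|b|u^{1/q}\psi^2\le D^{1/q}\|b\|_{L^\infty(B_1)}|B_1|<\infty$. Together these yield \eqref{1.10''}.

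I expect the main obstacle to be the second step: organizing the Bochner computation so that, after inserting the weight $u^{1/q}$ and integrating the drift $\langle X,\nabla\widetilde F\rangle$ by parts, a strictly positive multiple of $\int\widetilde F^{\beta+2}\psi^2$ survives with constant independent of $\beta$, and so that the $Ric^-$ and first-order contributions remain absorbable at every stage of the iteration — which is precisely where the smallness of the integral Ricci curvature is needed twice, once for the local Sobolev inequality and once for the absorptions.
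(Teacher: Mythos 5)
Your proposal is substantively the same as the paper's: both replace $\log u$ by the bounded weight $u^{1/q}$, both run Moser iteration on the quantity $|\nabla u|^2 u^{1/q-2}$ using Theorem~\ref{th1.1} and Lemma~\ref{l1.1}, and both close with the same $L^1$ bound obtained by testing the equation against $u^{1/q-1}\psi^2$. Your quantity $\widetilde F = F u^{1/q}$ is exactly $\tfrac{q^2}{q-1}$ times the paper's $v=(q-1)\tfrac{|\nabla w|^2}{w}$ with $w=u^{1/q}$, so the two are not different objects.

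The one place where your derivation genuinely deviates is the Bochner step, and it quietly changes the integrability demanded of $b$. The paper derives the PDE \eqref{5.1} satisfied by $w$, applies Bochner to $\tfrac{|\nabla w|^2}{w}$, and obtains the good quadratic term $\tfrac{v^2}{(q-1)w}$ from the interaction of $\Delta(1/w)$ with the equation, simply discarding the Hessian term $\tfrac{2|D^2w|^2}{w}\ge 0$ — so $b$ only ever enters linearly and is handled at each iteration step by H\"older with $\|b\|^*_p$. You instead keep $|D^2h|^2\ge\tfrac1n(\Delta h)^2$ and then bound $(\Delta h)^2\ge\tfrac12 F^2-(ah+b)^2$; after multiplying by $u^{1/q}$, the cross and square terms in $(ah+b)^2$ are killed by $\sup_{(0,D]}t^{1/q}|\log t|^m<\infty$ except for $u^{1/q}b^2\le D^{1/q}b^2$, which forces you to have $b^2\in L^p$, i.e.\ $b\in L^{2p}$ or $b\in L^\infty$. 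You appeal to Morrey/Sobolev from $\nabla b\in L^{2p}$, $2p>n$; that is indeed available here (volume doubling from Lemma~\ref{l2.2} plus the local Sobolev inequality of Theorem~\ref{th1.1} yield a local Poincar\'e inequality and hence a Morrey-type embedding), but it is a genuine extra step not needed by the paper's cleaner decomposition. Your rescaling $g\mapsto\lambda^{-2}g$ to reduce to $\lambda=1$ is also a valid alternative to the paper's direct tracking of $\lambda$ through the cut-off and iteration, at the cost of shrinking $k$ by the factor $2^{-1/p}$ coming from Remark~\ref{r1.2}.
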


\begin{remark}\label{r1.1'}
As a direct consequence of (\ref{1.10''}), we obtain (\ref{1.9**}) at once without assuming $\left| \nabla u\right|\in L^2(B_1)$ by setting $\lambda=1$. But here the condition $\left| \left|(b)_+ \right| \right|^*_{p, B_1}<+\infty$ needs to be replaced by $\left| \left|b \right| \right|^*_{p, B_1}<+\infty$.
\end{remark}

Up till now, all estimates obtained in the above are local. A natural problem is whether or not we can obtain some global results which are similar to (\ref{1.10''}). Indeed, we can show the following theorem which can be regarded as the extension of some classical gradient estimates.

\begin{theorem}\label{th1.4'}
Let $(M,g)$ be a complete non-compact Riemannian manifold of dimension $n\geq3$ without boundary, and its Ricci curvature $Ric_M\geq0$. Let $0<u\leq D$ be a solution to (\ref{equ}) with constant coefficients $a(x)\equiv a\geq0$ and $b(x)\equiv b$, where $D$ is some positive number. Then, for any $q>1$ and $R>0$, there exists a uniform constant $C=C(n,q,a,b,D)$ which does not depend on the radius $R$ such that
\begin{equation}\label{1.11'}
\sup\limits_{B\left( x,\frac{R}{2}\right) }\frac{\left|\nabla u\right|}{u^{1-\frac{1}{2q}}}\leq\frac{C}{R},
\end{equation}
if $\,a\log D+2aq+b\leq0$. Consequently, in this situation, $u$ must be constant on $M$. In other words, if $a>0$, then $u$ must be constant if the upper bound of $u$ is small enough.
\end{theorem}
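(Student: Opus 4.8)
The plan is to reduce the estimate to an interior gradient bound for the power $v:=u^{1/(2q)}$ and then to run a Bochner/maximum-principle argument in which the sign hypothesis $a\log D+2aq+b\le0$, together with $u\le D$ and $Ric_M\ge0$, forces every curvature- and coefficient-dependent term to have a favorable sign; that is exactly what keeps the resulting constant independent of $R$. Since $0<u\le D$ we have $0<v\le D':=D^{1/(2q)}$ and $|\nabla v|=\tfrac{1}{2q}\,|\nabla u|/u^{1-\frac{1}{2q}}$, so \eqref{1.11'} is equivalent to $\sup_{B(x,R/2)}|\nabla v|\le C/R$. Differentiating \eqref{equ} gives the quasilinear equation
\[
\Delta v=(1-2q)\frac{|\nabla v|^{2}}{v}-a v\log v-\frac{b}{2q}\,v .
\]

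Next I would set $F:=|\nabla v|^{2}$ and use the Bochner formula $\Delta F=2|\nabla^{2}v|^{2}+2\langle\nabla v,\nabla\Delta v\rangle+2\,Ric(\nabla v,\nabla v)$, substituting $\nabla\Delta v$ from the equation above. The term coming from $(1-2q)|\nabla v|^{2}/v$ contributes a drift $\langle X,\nabla F\rangle$ with $X:=2(1-2q)\nabla\log v$ together with a term $+\tfrac{2(2q-1)}{v^{2}}F^{2}$, which is nonnegative precisely because $q>1$. The term coming from $-av\log v-\tfrac{b}{2q}v$ equals $-\tfrac1q\big(a\log u+b+2aq\big)F$ (using $\log u=2q\log v$), and since $a\ge0$ and $u\le D$ this is $\ge-\tfrac1q\big(a\log D+b+2aq\big)F\ge0$ by hypothesis; finally $Ric_M\ge0$ discards $Ric(\nabla v,\nabla v)$. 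Hence, writing $\mathcal{L}:=\Delta-\langle X,\nabla\,\cdot\,\rangle$,
\[
\mathcal{L}F\ \ge\ 2|\nabla^{2}v|^{2}+\frac{2(2q-1)}{v^{2}}F^{2}\ \ge\ \frac{2(2q-1)}{v^{2}}F^{2}.
\]

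I would then use the Laplacian comparison theorem (valid since $Ric_M\ge0$) to pick a cutoff $\eta$ with $\eta\equiv1$ on $B(x,R/2)$, $\eta$ supported in $B(x,R)$, $|\nabla\eta|^{2}/\eta\le C(n)/R^{2}$ and $|\Delta\eta|\le C(n)/R^{2}$, and analyze an interior maximum of $F\eta$ (the case $F\equiv0$ being trivial). There $\mathcal{L}(F\eta)\le0$, which, using the displayed Bochner inequality and $|X|=2(2q-1)F^{1/2}/v$, unwinds into an inequality in which the good term $\tfrac{2(2q-1)}{v^{2}}\eta F^{2}$ must dominate error terms of the form $\tfrac{C(n)}{R^{2}}F$ and $\tfrac{C(n,q)}{vR}F^{3/2}$. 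The decisive (and most delicate) point is that these are absorbed by Young's inequality using \emph{only} the upper bound $v\le D'$ — e.g. $\tfrac{F^{3/2}}{vR}\le\varepsilon\tfrac{F^{2}}{v^{2}}+C_{\varepsilon}\tfrac{(D')^{2}}{R^{4}}$ — and never a lower bound for $v$; this is why one must keep the weight $v^{-2}$ rather than estimate $v$ from below, and it is the reason the exponent $1-\tfrac1{2q}$ (equivalently $2q-1>0$) is chosen. One concludes $\sup_{B(x,R/2)}|\nabla v|^{2}\le C(n,q,D)/R^{2}$, i.e.\ \eqref{1.11'}, with a constant that depends on $a,b$ only through the validity of the sign hypothesis and, crucially, not on $R$.

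Finally, letting $R\to\infty$ in \eqref{1.11'} and using $u\le D$ gives $|\nabla u|\le CD^{1-\frac1{2q}}/R\to0$ on all of $M$, so $u$ is constant; and when $a>0$ the inequality $a\log D+2aq+b\le0$ can be satisfied for some $q>1$ precisely when $\log D<-2-b/a$, i.e.\ once the upper bound $D$ is small enough, in which case $u$ must be constant. I expect the Bochner step (isolating the drift and the favorable quadratic term) and the absorption step above to be the main obstacles; everything else is the standard cutoff bookkeeping of Li--Yau/Yau-type gradient estimates.
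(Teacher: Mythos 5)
Your proposal is essentially correct, but it runs on a genuinely different engine than the paper's proof. The paper takes $w=u^{1/q}$, sets $v=(q-1)|\nabla w|^2/w$ (which is the same quantity as your $F=|\nabla(u^{1/(2q)})|^2$ up to the factor $4(q-1)$), and then runs the same Moser iteration machinery used throughout the paper, applied now with the scale-invariant Sobolev inequality (\ref{6.2}) valid under $Ric_M\geq0$ and the Li--Yau cutoff; the sign hypothesis $a\log D+2aq+b\leq0$ enters precisely to discard the term $C_1+\tfrac{b'}{2}$ in (\ref{6.13}). You instead set $v=u^{1/(2q)}$, $F=|\nabla v|^2$, isolate the drift $X=2(1-2q)\nabla\log v$ and the favorable quadratic $\tfrac{2(2q-1)}{v^2}F^2$ from Bochner, and close via a pointwise maximum-principle argument on a compactly supported cutoff. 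Both routes work and both avoid any lower bound on $u$ (you keep the weight $v^{-2}$ so only $v\leq D^{1/(2q)}$ is used; the paper simply never takes $\log u$). Your max-principle route is more elementary in that it bypasses the Moser bookkeeping and the abstract $L^p$--$L^\infty$ bridge of Lemma \ref{l2.3}; the paper's Moser route is the natural specialization of the integral-curvature arguments of Sections 3--5 (where a max principle is unavailable) and hence unifies the exposition. Two small points in your favor: the paper's stated constant $C=C(n,q,a,b,D)$ depends on $a,b$ through $C_3=a\log D^{1/q}+b'$, but under the hypothesis $C_3\leq0$, so this term could be dropped exactly as you observe, and your characterization $\log D<-2-b/a$ of when the hypothesis is satisfiable for some $q>1$ makes the paper's last sentence precise. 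Two minor points to tighten: at the max-point computation you should carry $F\eta^2$ (not $F\eta$) so that the bad term becomes $C(q)\,(\eta^2F)^{3/2}/(vR)$ and closes cleanly by Young's inequality against $(2q-1)(\eta^2F)^2/v^2$, giving $\eta^2F\leq C(n,q)\,v^2/R^2$; and the maximum may sit on the cut locus of $x$, for which one should invoke Calabi's trick as the paper does implicitly.
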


\begin{remark}\label{r1.8}
	From the following proof of Theorem \ref{th1.4'}, obviously, one can see that if $a=0$ and $b\leq 0$ simultaneously, then we need not to assume that $u$ has a upper bound. In fact, by a scaling argument, (\ref{1.11'}) also holds true as long as $\kappa(p,R)<\varepsilon(n,p)$. But as $R\longrightarrow\infty$, in view of Lemma \ref{l2.2}, $\kappa(p,R)<\varepsilon(n,p)$ implies $Ric_M\geq0$.
\end{remark}

As a direct consequence of Theorem \ref{th1.4'} and Remark \ref{r1.8}, we obtain the following classical result again. Specifically, letting $a=0$, $b=0$ and $R\rightarrow+\infty$, we have the following
\begin{corollary}
Let $(M,g)$ satisfy the same assumptions as in Theorem \ref{th1.4'}. Then, any positive harmonic functions on such $M$ must be constant.
\end{corollary}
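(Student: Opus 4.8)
The plan is to read this off directly from Theorem \ref{th1.4'} together with Remark \ref{r1.8}, with essentially no new work required. Take $a(x)\equiv 0$ and $b(x)\equiv 0$ in equation (\ref{equ}); then (\ref{equ}) becomes $\Delta u=0$, so the hypothesis covers exactly the positive harmonic functions on $M$. The structural condition $a\log D+2aq+b\le 0$ demanded in Theorem \ref{th1.4'} reduces to $0\le 0$, hence is satisfied for every $q>1$, while the curvature hypothesis $Ric_M\ge 0$ and the completeness, non-compactness and dimension assumptions are all inherited from Theorem \ref{th1.4'}.

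The only point needing care is the a priori boundedness $0<u\le D$ imposed in Theorem \ref{th1.4'}: a positive harmonic function on a complete manifold need not be bounded. Here I would invoke Remark \ref{r1.8}, which asserts that when $a=0$ and $b\le 0$ simultaneously the proof of Theorem \ref{th1.4'} goes through without any upper bound on $u$ — the quantity $D$ enters only through the term $a\log D$, which disappears once $a=0$. Since $b=0\le 0$ in our situation, the estimate (\ref{1.11'}) therefore applies to an \emph{arbitrary} positive harmonic $u$, with a constant $C=C(n,q)$ depending on neither $D$ nor $R$.

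It then remains to let $R\to\infty$. Fix any $q>1$; for every $R>0$ and every $x\in M$ we obtain
\[
\sup_{B\left(x,\frac{R}{2}\right)}\frac{|\nabla u|}{u^{1-\frac{1}{2q}}}\le\frac{C(n,q)}{R}.
\]
Fixing a point $y\in M$ (at which $u(y)>0$, so $u(y)^{1-\frac{1}{2q}}$ is a finite positive number) and letting $R\to\infty$ forces $|\nabla u|(y)\,u(y)^{-(1-\frac{1}{2q})}=0$, hence $|\nabla u|(y)=0$; since $y$ was arbitrary and $M$ is connected, $u$ is constant on $M$.

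I do not anticipate a genuine obstacle, since the entire analytic content has been absorbed into Theorem \ref{th1.4'}. The one point worth double-checking is that Remark \ref{r1.8} really does remove the boundedness assumption in the regime $a=0$, $b\le 0$ — that is, that in the proof of Theorem \ref{th1.4'} the bound $D$ intervenes only via $a\log D$ (equivalently, only through the sign requirement $a\log D+2aq+b\le 0$) and not, say, in controlling an $L^2$-norm of $u$ or of $\log u$ on $B_R$ that would blow up as $R\to\infty$. Once that is confirmed, the limit $R\to\infty$ is immediate and the classical Yau-type Liouville theorem for nonnegative Ricci curvature follows at once.
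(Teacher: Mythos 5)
Your proposal is correct and follows exactly the route the paper intends: set $a=0$, $b=0$ in Theorem \ref{th1.4'}, observe that the sign condition $a\log D+2aq+b\le0$ is trivially satisfied, invoke Remark \ref{r1.8} to dispense with the a priori upper bound on $u$, and let $R\to\infty$ in (\ref{1.11'}) to conclude $|\nabla u|\equiv 0$. You have also correctly isolated the one subtle point (that $D$ enters the proof of Theorem \ref{th1.4'} only through the $a\log D$ term and thus drops out when $a=0$), which is precisely what Remark \ref{r1.8} is asserting.
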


As for the equation (\ref{equ*}), under inspirations from \cite{W} and \cite{ZZ}, we can also prove the following result:
\begin{theorem}\label{th1.3}
Let $u$ be a positive smooth solution to (\ref{equ*}) and satisfy $u\leq D$ for some positive constant $D$. Let $f=\log\frac{u}{D}$ and assume that, on $B(x,1)\times(0,\infty)$, $a, b, \left| \nabla a\right|, \left| \nabla b\right|$ and $\left|a_t \right|$ are bounded and $\Delta b$ is bounded from below. For manifold $(M,g)$, we assume dimension $n\geq2$ and $p>\frac{n}{2}$. Furthermore, let $N<0$ be a constant depends on $n$, $D$ and the bounds of $a(x, t)$, and $A$ be a constant which satisfies $ A>(a)^+$ and $A\geq(-a)^++3$. Then, on $B\left( x,\frac{1}{2}\right) \times(0,\infty)$, there exists a constant $k=k(n,p)>0$ such that if $\kappa(p,1)\leq k$, it holds that
\begin{equation}\label{1.7}
\begin{split}
&\underline{J}\left| \nabla f\right|^2+(A+a)f+2(N+b)-2f_t\\
&\leq \frac{4n}{(2-\delta)\underline{J}}\left\lbrace \frac{n\left( \left|\nabla a \right|^2 \right) ^+}{(2-\delta)\underline{J}(A-(a)^+)^2}+\frac{(\Delta a+a_t)^+}{(A-(a)^+)}+\frac{8nC}{(2-\delta)\underline{J}}+(a)^++8C+\frac{1}{t}\right\rbrace
\end{split}
\end{equation}
where
$C$ is the same as in Lemma \ref{l1.1}, $\delta$, $C_1$ and $C_2$ are some constants such that $$0<\delta\leq\frac{2}{1+4n}, \quad C_1=\frac{5}{\delta},\quad C_2=C_2(n,p)$$
and
$$\underline{J}=\underline{J}(t)=2^{\frac{-1}{C_1-1}}\exp\left\lbrace -2C_2k\left( 1+\left[ 2C_2(C_1-1)k\right]^{\frac{n}{2p-n}} \right)t \right\rbrace.$$
\end{theorem}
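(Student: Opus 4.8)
The plan is to follow the scheme of Zhang--Zhu \cite{ZZ} and W. Wang \cite{W}, adapting it to the variable-coefficient equation (\ref{equ*}) and, crucially, replacing the pointwise lower Ricci bound used in the classical arguments (\cite{Yang, C-C*, Wj}) by the integral smallness $\kappa(p,1)\le k$, which will be absorbed through the time-dependent weight $\underline{J}(t)$.

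First I would reduce (\ref{equ*}) to a cleaner form. Setting $f=\log\frac{u}{D}$, so that $f\le 0$ since $0<u\le D$, a direct computation gives
\[
\Delta f-f_t=-|\nabla f|^2-af-(a\log D+b).
\]
As $a,b,|\nabla a|,|\nabla b|,|a_t|$ are bounded on $B(x,1)\times(0,\infty)$ and $\Delta b$ is bounded below, Lemma \ref{l1.1} (a Moser-iteration bound resting on the local Sobolev inequality of Section \ref{sec2}) supplies an a priori constant $C$ with $|\nabla f|^2\le C$ on $B(x,\tfrac12)\times(0,\infty)$; this is the $C$ occurring on the right side of (\ref{1.7}), and it is needed because the integral curvature term below must be estimated in terms of an $L^\infty$ bound on $|\nabla f|^2$.

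Next I would set $w=|\nabla f|^2$ and study the Li--Yau--Hamilton type quantity $\omega=\underline{J}\,w+(A+a)f+2(N+b)-2f_t$. Using the Bochner formula $\Delta w=2|\nabla^2 f|^2+2\langle\nabla f,\nabla\Delta f\rangle+2Ric(\nabla f,\nabla f)$ together with the equation for $f$, I would expand $(\Delta-\partial_t)\omega$; the terms arising from $af$, $a\log D+b$, $\Delta a$, $a_t$ and $\langle\nabla a,\nabla f\rangle$ are controlled via $f\le 0$, the coefficient bounds and Cauchy--Schwarz, while the constraints $A>(a)^+$, $A\ge(-a)^++3$ and the choice of $N<0$ depending on $n$, $D$ and $\sup|a|$ are exactly what keep the leading quadratic terms in $w$ and $f_t$ of a definite sign. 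Completing the square there produces the factor $(2-\delta)\underline{J}/(2n)$ and the constants $\delta\le 2/(1+4n)$, $C_1=5/\delta$, and one arrives at a differential inequality of the schematic shape
\[
(\Delta-\partial_t)\omega\ \ge\ \tfrac{(2-\delta)\underline{J}}{2n}\,\omega^2-\big(\underline{J}'+2\,Ric^-\,\underline{J}\big)w-(\text{controlled lower-order terms}),
\]
the lower-order terms accounting for the $8nC$, $8C$, $(a)^+$, $(\Delta a+a_t)^+$ and $(|\nabla a|^2)^+$ contributions in (\ref{1.7}).

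The step I expect to be the main obstacle is the control of $2\,Ric^-\underline{J}\,w$, since no pointwise lower bound on $Ric$ is available. Here $\underline{J}(t)$ serves as an integrating factor: taking $\underline{J}$ to solve $\underline{J}'=-2C_2k\big(1+[2C_2(C_1-1)k]^{\frac{n}{2p-n}}\big)\underline{J}$ with $\underline{J}(0)=2^{-1/(C_1-1)}$ — which yields precisely the stated formula for $\underline{J}(t)$ — and using the bound $w\le C$ from the first step, the spatial average of $Ric^-w$ over $B(x,1)$ is estimated by $\left\| Ric^-\right\|^*_{p,B(x,1)}$ times a Sobolev-controlled norm of a spatial cutoff of $\omega$; the smallness $\kappa(p,1)\le k$ then lets this be absorbed into $\underline{J}'w$ plus an arbitrarily small multiple of the quadratic term, which is where $k=k(n,p)$, the Sobolev exponent gap $\frac{n}{2p-n}$ and the constant $C_2=C_2(n,p)$ are fixed. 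With this absorption in hand, one runs the De Giorgi--Moser iteration on $\omega_+$: testing the differential inequality against powers of $\omega_+$ times a space-time cutoff (spatial part $\eta$ equal to $1$ on $B(x,\tfrac12)$ and supported in $B(x,1)$, whose derivatives are handled by the integral Laplacian comparison, cf. Lemma \ref{l2.2}; temporal part vanishing near $t=0$), integrating over $B(x,1)\times(0,T]$, and feeding the result through the local Sobolev inequality of Section \ref{sec2} to produce a reverse-Hölder chain whose limit is an $L^\infty$ bound on $\omega_+$, the quadratic term $\tfrac{(2-\delta)\underline{J}}{2n}\omega^2$ supplying the necessary gain at each step. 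Keeping track of the constants and letting $T\to\infty$ then yields (\ref{1.7}) on $B(x,\tfrac12)\times(0,\infty)$, the $1/t$ term being the standard by-product of the temporal cutoff.
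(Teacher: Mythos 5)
Your high-level framing (a Li--Yau--Hamilton quantity, a weight absorbing the integral curvature, Cauchy--Schwarz and sign conditions on $A$, $N$) is in the right spirit, but there are three concrete gaps against what the paper actually does.

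First, you misidentify the constant $C$ in (\ref{1.7}). You propose that $C$ is an a priori bound on $|\nabla f|^2$ produced by a preliminary Moser iteration; but such a bound would already be the content of the theorem, so the argument would be circular. In the paper, $C$ is the constant from Lemma~\ref{l1.1}: it bounds $|\nabla\phi|^2+|\Delta\phi|$ for the spatial cutoff $\phi\in C_0^\infty(B_1)$ that exists under $\kappa(p,1)\le k$. It enters only when the maximum principle is applied to $t\phi^2F$ and one has to control $\phi\Delta\phi$ and $|\nabla\phi|^2$.

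Second, and most importantly, you take the weight to be the explicit time-only function $\underline{J}(t)$ and ask it to solve an ODE; but $\Delta\underline{J}=0$ and $\nabla\underline{J}=0$, so inserting $\underline{J}$ in front of $|\nabla f|^2$ does nothing to cancel the $-2R_c J|\nabla f|^2$ term coming out of the Bochner formula, which can be pointwise huge even when $\kappa(p,1)$ is small. The paper's device (Lemma~\ref{l3.1}, quoting Zhang--Zhu) is a genuine space-time function $J(y,t)$ solving the parabolic PDE
\[
\Delta J-2R_cJ-5\delta^{-1}\frac{|\nabla J|^2}{J}-\partial_tJ=0
\]
with $J(\cdot,0)=1$ and $J=1$ on $\partial B_r$, so that the combination $(\Delta J-2R_cJ-5\delta^{-1}|\nabla J|^2/J-J_t)|\nabla f|^2$ that shows up in $\Delta F-F_t$ (after the Cauchy--Schwarz steps that generate the $5\delta^{-1}|\nabla J|^2/J$ part) vanishes identically. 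The explicit $\underline{J}(t)$ is only a lower bound for this $J$ and is substituted at the very last step. Without the PDE solution $J$, your Ricci term remains unabsorbed and the claimed absorption ``into $\underline{J}'w$ plus a small multiple of the quadratic term'' does not go through at the pointwise level where the maximum principle is applied.

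Third, the paper does not close the argument by De Giorgi--Moser iteration on $\omega_+$; it uses the localized parabolic maximum principle applied to $t\phi^2F$ at its interior maximum, together with a case distinction on the sign of an auxiliary quantity $F_1$ containing the $f\Delta a$, $fa_t$ and $f|\nabla f||\nabla a|$ terms (Case~I yields the $\bigl(|\nabla a|^2\bigr)^+$ and $(\Delta a+a_t)^+$ contributions directly, Case~II yields the $(a)^+$, $8C$, $8nC/[(2-\delta)\underline{J}]$ and $1/t$ contributions). A Moser iteration might in principle be run, but it is a different argument that would produce different constants and would not obviously give the precise form (\ref{1.7}); as written your plan leaves the specific shape of the right-hand side, including the $1/t$ and the $(2-\delta)\underline{J}$ factors, unjustified.

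A corrected plan should: invoke Lemma~\ref{l3.1} for the space-time $J$, define $F=J|\nabla f|^2+(A+a)f+2(N+b)-2f_t$ with this $J$, prove Lemma~\ref{l3.2} to get the clean differential inequality with the Ricci term cancelled, split on the sign of $F_1$, and apply the maximum principle to $t\phi^2F$ using the cutoff of Lemma~\ref{l1.1}; only then replace $J$ by its bounds $\underline{J}\le J\le1$.
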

\begin{remark}\label{r1.1}
From the following proof of Theorem \ref{th1.3}, it's easy to observe that if $a(x,t)$ is a constant, e.g., $a(x,t)\equiv A$, then we do not need to assume $u$ has a upper bound and hence recover the estimate in \cite{W}.
\end{remark}
\begin{remark}\label{r1.2}
	As pointed out in Section 2.3 of \cite{PW2}, for $r_2>r_1$, $\kappa(p, r_2)$ and $\kappa(p, r_1)$ can be controlled by each other via multiplying by constants which depend on $n,p,r_2,r_1$. This is why we can always assume $\kappa(p, 1)\leq k$ rather than $\kappa(p, r)\leq k$ for $r<1$.  Especially, $ \kappa(p,r_1)\leq 2^{\frac{1}{p}}\left( \frac{r_1}{r_2}\right)^{2-\frac{n}{p}}\kappa(p,r_2)\leq2^{\frac{1}{p}}\kappa(p,r_2)$.
\end{remark}

\section{\textbf{Preliminaries and Notations}}\label{sec2}
In the following arguments presented in this paper, we need to use the following theorem which claims the local Sobolev embedding is valid under some geometric conditions on integral Ricci curvature bounds.
\begin{theorem}[{\cite{DWZ}}, Corollary 4.6]\label{th1.1}
For $p>\frac{n}{2}$ and $0<r\leq1$, there exist a constant $k=k(n,p)>0$ and some $C_S=C_S(n,r)$ such that if $\kappa(p,1)\leq k$, there hold that
\begin{equation}\label{1.3}
\left| \left| f\right| \right|^*_{\frac{n}{n-1},B(x,r)}\leq C_S\left| \left| \nabla f\right| \right|^*_{1,B(x,r)}
\end{equation}
and
\begin{equation}\label{1.4}
\left| \left| f\right| \right|^*_{\frac{2n}{n-2},B(x,r)}\leq C_S\left| \left| \nabla f\right| \right|^*_{2,B(x,r)}
\end{equation}
for any $f\in C^{\infty}_0(B(x,r))$. Here we need to point out that $C_S$ in (\ref{1.3}) and (\ref{1.4}) are different from each other, but for simplicity, we use the same symbol. Also, $C_S$ has the form $C_S=C(n)r$, so $C_S$ is in the scale proportional to $r$.
\end{theorem}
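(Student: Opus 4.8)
The statement is Corollary~4.6 of \cite{DWZ}, so the plan is to reproduce the structure of that argument. The whole proof reduces to assembling two geometric facts that become available once the scale‑invariant integral bound $\kappa(p,1)$ is below a threshold $k(n,p)$: (i) a \emph{relative} (scale‑invariant) volume comparison that is quantitatively close to the Euclidean one, and hence volume doubling with constant near $2^{n}$; and (ii) a local $(1,1)$‑Neumann--Poincar\'e inequality valid at every scale $s\le r$ with constant of the form $C(n)\,s$. The reason the sharp Sobolev exponents $\tfrac{n}{n-1}$ and $\tfrac{2n}{n-2}$ come out with a constant $C_{S}=C(n)r$ that does not involve the volume of the ball --- so that, unlike the earlier estimate of Petersen--Wei, no non‑collapsing hypothesis is needed --- is that everything is written in the averaged norms $\left\|\cdot\right\|^{*}_{q,B(x,r)}$, in which the volume normalizations cancel.

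For ingredient (i) I would follow Petersen--Wei and control the excess of the Laplacian comparison theorem. Writing $r(\cdot)=d(x,\cdot)$ and $\psi=\big(\Delta r-\tfrac{n-1}{r}\big)_{+}$, the Riccati/Bochner inequality satisfied by $\Delta r$ together with H\"older's inequality yields an integral bound of the form $\left\|\psi\right\|^{*}_{2p,B(x,r)}\le C(n,p)\,r^{-1}\kappa(p,r)$; feeding this into the differential inequality for the volume density in geodesic polar coordinates shows that the density differs from the Euclidean one by a factor tending to $1$ as $\kappa(p,r)\to0$. This gives, for $\kappa(p,1)\le k$ small, the relative comparison $|B(y,s)|/s^{n}\ge c(n)\,|B(y,t)|/t^{n}$ for $0<s\le t\le1$, hence doubling, and the lower bound $|B(y,s)|\ge c(n)\,s^{n}\,|B(x,r)|\,r^{-n}$ for $y\in B(x,r)$, $s\le r$, which is what is used later. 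For ingredient (ii) the same mechanism --- bounding the Jacobian of the exponential map along geodesics by the integral of $|Ric^{-}|^{p}$ through the excess estimate --- produces a Cheeger--Colding type segment inequality, and from it the local Neumann--Poincar\'e inequality $\fint_{B(y,s)}|f-f_{B(y,s)}|\le C(n)\,s\,\fint_{B(y,s)}|\nabla f|$.

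With (i) and (ii) in hand, for $f\in C_{0}^{\infty}(B(x,r))$ extended by zero I would run the classical telescoping argument: summing the Poincar\'e inequalities over the dyadic balls $B(y,2^{-j}r)$ and using doubling gives the pointwise potential estimate
\[
|f(y)|\ \le\ C(n)\int_{B(x,2r)}\frac{d(y,z)}{|B(y,d(y,z))|}\,|\nabla f|(z)\,dz,
\]
and inserting the volume lower bound from (i) bounds the kernel by $C(n)\,r^{n}|B(x,r)|^{-1}\,d(y,z)^{1-n}$. The remaining step is the mapping property of the Riesz‑type potential $I(y)=\int_{B(x,2r)}|\nabla f|(z)\,d(y,z)^{1-n}\,dz$, namely $\left\|I\right\|_{L^{n/(n-1)}}\le C(n)\,r\,\left\|\nabla f\right\|_{L^{1}}$, proved by splitting into dyadic annuli and using doubling again; converting to the averaged norms makes the volume factors cancel and yields (\ref{1.3}) with $C_{S}=C(n)r$. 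Finally (\ref{1.4}) follows from (\ref{1.3}) applied to $|f|^{\gamma}$ with $\gamma=\tfrac{2(n-1)}{n-2}$, using $\left|\nabla|f|^{\gamma}\right|=\gamma|f|^{\gamma-1}|\nabla f|$ and the numerology $\gamma\cdot\tfrac{n}{n-1}=\tfrac{2n}{n-2}=2(\gamma-1)$, followed by H\"older and a division.

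The hard part is ingredient (i). Because only an $L^{p}$ bound on $Ric^{-}$ is assumed, the Laplacian comparison and the Jacobian estimate can fail at individual points; the real work is to bound the measure of the ``bad set'' by $\kappa(p,r)$ via the integral excess estimate, and then to verify that \emph{all} constants stay uniform as $\kappa(p,1)\to0$ --- in particular that the volume‑comparison factor stays close to the Euclidean value and the Poincar\'e constant does not blow up --- which is exactly what forces the smallness threshold $k=k(n,p)$. Everything after that (the telescoping, the Riesz potential bound, the power trick for (\ref{1.4})) is standard harmonic analysis on doubling spaces.
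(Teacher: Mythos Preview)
The paper does not prove this statement at all: Theorem~\ref{th1.1} is quoted verbatim as Corollary~4.6 of \cite{DWZ} and used throughout as a black box, so there is no ``paper's own proof'' to compare against. Your proposal is therefore not a comparison but an independent reconstruction of the Dai--Wei--Zhang argument, and your outline (relative volume comparison from the Petersen--Wei Laplacian comparison, then a local $(1,1)$ Poincar\'e via a segment inequality, then the Haj\l asz--Koskela telescoping/chaining to a Riesz potential, then the power trick for (\ref{1.4})) is the right architecture and is essentially what \cite{DWZ} does.

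There is one genuine technical slip. You assert the strong mapping property
\[
\bigl\| I \bigr\|_{L^{n/(n-1)}}\ \le\ C(n)\,r\,\bigl\|\nabla f\bigr\|_{L^{1}}
\]
for the Riesz-type potential, ``proved by splitting into dyadic annuli and using doubling again.'' That dyadic argument only yields the \emph{weak}-type $(1,\tfrac{n}{n-1})$ bound; the strong $L^{1}\to L^{n/(n-1)}$ mapping fails for Riesz potentials at the endpoint, already on $\mathbb{R}^{n}$. The Sobolev inequality (\ref{1.3}) is nevertheless true, but to recover it from the pointwise potential estimate you must insert Maz'ya's truncation trick: apply the weak-type bound to the level-set truncations $f_{k}=\min\{(|f|-2^{k})_{+},2^{k}\}$, use that $|\nabla f_{k}|$ is supported on $\{2^{k}<|f|\le 2^{k+1}\}$, and sum in $k$. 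This step is standard (it is how Haj\l asz--Koskela upgrade weak to strong Sobolev on doubling spaces) but it is not optional, and as written your outline skips it. Once you add that, the rest of your plan is sound.
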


In particular, we also need to use the following lemma established in \cite{DWZ} as they derived the gradient estimates on positive harmonic functions in \cite{DWZ, PW1}.
\begin{lemma}[{\cite{DWZ}}, Lemma 5.4]\label{l1.1}
For $p>\frac{n}{2}$ and $0<r\leq1$, there exist a constant $k=k(n,p)>0$ and some constant $C=C(n,p)$ such that if $\kappa(p,1)\leq k$, then there exists a cut-off function $\phi\in C^\infty_0\left(  B(x,r)\right)$ satisfies
\begin{equation}\label{1.6}
0\leq\phi\leq1,\quad \phi\equiv1 \hspace*{0.3em}\mbox{on} \hspace*{0.3em}B\left( x,\frac{r}{2}\right)\quad \mbox{and}\quad \left| \nabla\phi\right|^2+\left| \Delta\phi\right| \leq\frac{C}{r^2}.
\end{equation}
\end{lemma}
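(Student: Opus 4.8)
The plan is to produce $\phi$ as a composition $\phi=\chi\circ v$ of a fixed smooth one–variable cut–off $\chi$ with a suitably normalised auxiliary function $v$ on $B(x,r)$ that vanishes on $\partial B(x,r)$; the natural choice is the first Dirichlet eigenfunction of $B(x,r)$, normalised by $\max v=1$ (the Poisson potential solving $\Delta v=1$, $v|_{\partial B(x,r)}=0$, would do as well). I will describe the argument for the eigenfunction $v$, whose advantage is that it solves a \emph{homogeneous} equation $\Delta v+\lambda_1 v=0$. Although $v$ depends on $x$ and $r$, every bound I need for it will be \emph{uniform}, since each is produced by Moser iteration out of the local Sobolev inequality of Theorem~\ref{th1.1} and the uniform local volume doubling, both available once $\kappa(p,1)\le k(n,p)$. (Any irregularity of the metric sphere $\partial B(x,r)$ is a technical nuisance only; one may instead pose the eigenvalue problem on a smooth domain $\Omega$ with $B(x,\tfrac{9r}{10})\subset\Omega\subset B(x,r)$, since only interior estimates and a weak boundary barrier enter.)

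First I would record the following facts about $v$, with all constants depending only on $n$ and $p$. (a) Testing against the Lipschitz cut–off $\eta(d(x,\cdot))$ gives $\lambda_1\le Cr^{-2}$. (b) By the Bochner identity,
\[
\Delta|\nabla v|^2=2|\nabla^2 v|^2-2\lambda_1|\nabla v|^2+2\,Ric(\nabla v,\nabla v)\ge-2\bigl(\lambda_1+Ric^-\bigr)|\nabla v|^2 ,
\]
so $|\nabla v|^2$ is a subsolution of $\Delta(\cdot)+2(\lambda_1+Ric^-)(\cdot)\ge0$ whose zeroth–order coefficient is under control ($\lambda_1 r^2\le C$, and $r^2(\fint_{B(x,r)}(Ric^-)^p)^{1/p}=\kappa(x,p,r)\le\kappa(p,1)$ is small); Moser's $L^\infty$ estimate — run with the cheap Lipschitz cut–offs, so there is no circularity with \eqref{1.6} — together with $\fint_{B(x,r)}|\nabla v|^2=\lambda_1\fint_{B(x,r)}v^2\le\lambda_1\le Cr^{-2}$ then yields $|\nabla v|\le Cr^{-1}$ on $B(x,(1-\delta)r)$ for a fixed $\delta=\delta(n,p)\in(0,\tfrac12)$. (c) Since $v>0$ solves the homogeneous equation, it obeys the additive–term–free Harnack inequality on balls compactly contained in $B(x,r)$; the boundary decay (d) forces the maximum point $y_0$ of $v$ into $B(x,(1-\delta)r)$, and a Harnack chain of $O_{n,p}(1)$ balls along the broken minimising geodesic $y_0\rightsquigarrow x\rightsquigarrow z$ (which stays inside $B(x,(1-\delta)r)$) then gives $v\ge c>0$ on $B(x,\tfrac r2)$. (d) A boundary barrier — equivalently the boundary gradient bound $|\nabla v|\le Cr^{-1}$ along $\partial\Omega$ — gives $v(y)\le C\bigl(1-r^{-1}d(x,y)\bigr)$ near $\partial B(x,r)$, hence, after shrinking $\delta$ if needed, $v\le\tfrac14 c$ on $B(x,r)\setminus B(x,(1-\delta)r)$.

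Granting (a)–(d), I would fix $\chi\in C^\infty(\mathbb R)$ with $0\le\chi\le1$, $\chi\equiv0$ on $(-\infty,\tfrac14 c]$, $\chi\equiv1$ on $[\tfrac12 c,\infty)$ and $|\chi'|+|\chi''|\le C(c)$, and set $\phi:=\chi(v)$. Then $0\le\phi\le1$; on $B(x,\tfrac r2)$ one has $v\ge c$, so $\phi\equiv1$ there; and $\{\phi>0\}\subset\{v>\tfrac14 c\}\subset B(x,(1-\delta)r)$, so $\phi\in C^\infty_0(B(x,r))$. Finally, since $\nabla\phi=\chi'(v)\nabla v$ and $\Delta\phi=\chi''(v)|\nabla v|^2+\chi'(v)\Delta v=\chi''(v)|\nabla v|^2-\lambda_1\chi'(v)v$, and since $\chi'$ and $\chi''$ are supported where $v\in[\tfrac14 c,\tfrac12 c]$, hence inside $B(x,(1-\delta)r)$ where $|\nabla v|\le Cr^{-1}$, I obtain $|\nabla\phi|\le Cr^{-1}$ and $|\Delta\phi|\le C|\nabla v|^2+C\lambda_1\le Cr^{-2}$ with $C=C(n,p)$; combining (and using $r\le1$) gives \eqref{1.6}.

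The step I expect to be the real obstacle is establishing (b), (c) and (d) \emph{uniformly in $x$} — above all the lower bound $v\ge c(n,p)$ on $B(x,\tfrac r2)$ and the quantitative boundary decay, where the geometry of the possibly very irregular sphere $\partial B(x,r)$ must be dealt with. This is precisely where the hypothesis $\kappa(p,1)\le k$ is used non–perturbatively: it furnishes the uniform local Sobolev inequality of Theorem~\ref{th1.1}, the uniform local volume doubling, and hence the uniform Harnack inequality, and every constant $C=C(n,p)$ above traces back to these.
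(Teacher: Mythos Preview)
The paper does not give its own proof of this lemma; it is quoted as a preliminary tool from \cite{DWZ}, Lemma~5.4, so there is nothing here to compare against directly. Your outline follows the correct Cheeger--Colding template --- solve an auxiliary elliptic problem on the ball and compose with a one--variable cut--off --- and steps (a), (b), and the final composition are sound once (c) and (d) are in hand.

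The genuine gap is step (d), and it propagates back to (c). You propose to obtain the uniform boundary decay of $v$ from ``a boundary barrier --- equivalently the boundary gradient bound $|\nabla v|\le Cr^{-1}$ along $\partial\Omega$''. Under an \emph{integral} Ricci hypothesis neither of these is available with constants depending only on $(n,p)$. Any barrier built from the distance function has Laplacian $g''(d)+g'(d)\,\Delta d$, and by Lemma~\ref{l2.1} the quantity $\Delta d$ is controlled only in $L^{2p}$, not pointwise, so there is no pointwise differential inequality to feed into the maximum principle. Passing to a smooth domain $\Omega$ does not rescue the argument either: the Schauder boundary--gradient constant for the first eigenfunction depends on the second fundamental form of $\partial\Omega$, which is not uniformly bounded in terms of $\kappa(p,1)$ alone. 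Without a uniform (d) you cannot locate the maximum point $y_0$ of $v$ away from $\partial\Omega$, and then the Harnack chain in (c) has no uniform bound on its length, so $c=c(n,p)$ is lost.

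The repair is precisely the alternative you mention in passing: take the Poisson solution $\Delta u=2n\,r^{-2}$ on $B(x,r)$ with $u|_{\partial B(x,r)}=0$, rather than the eigenfunction. The decisive advantage is the explicit model $\underline u(y)=\bigl(d(x,y)^2-r^2\bigr)/r^2$: the difference $w=u-\underline u$ then satisfies $\Delta w=-2r^{-2}d\bigl(\Delta d-(n-1)/d\bigr)$, whose positive part is controlled in $L^{2p}$ by Lemma~\ref{l2.1}, and a weak maximum--principle/Moser argument yields the one--sided bound $-u\ge -\underline u - C\kappa(p,1)^{\alpha}$. This already gives (c) directly, with no Harnack chain needed; the remaining support and gradient control near the boundary in \cite{DWZ} is then extracted from this comparison together with the global $L^2$ gradient bound and a further iteration. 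For the first Dirichlet eigenfunction there is no comparably explicit model solution to compare against, which is exactly why your route stalls at (d).
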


Given $x\in M$, let $r(y)=d(y,x)$ be the distance function from $x$ and $\psi(y)=\left( \Delta r-\frac{n-1}{r}\right) _+$. The classical Laplacian comparison states that, if the Ricci curvature $Ric_M$ of $M$ satisfies $Ric_M\geq0$, then $\Delta r\leq \frac{n-1}{r}$, i.e. $\psi\equiv0$. In \cite{PW1}, this reslut is generalized to the case of integral Ricci bounds:
\begin{lemma}[{\cite{PW1}}, Lemma 2.2]\label{l2.1}
For $p>\frac{n}{2}$ and $r>0$, there holds
\begin{equation*}
\left| \left| \psi\right| \right|_{2p,B(x,r)}\leq\left(\frac{(n-1)(2p-1)}{2p-n} \left| \left|Ric^- \right| \right|_{p,B(x,r)} \right)^{\frac{1}{2}},
\end{equation*}
equivalently,
\begin{equation}\label{2.1}
\left| \left| \psi\right| \right|^*_{2p,B(x,r)}\leq\left(\frac{(n-1)(2p-1)}{2p-n} \left| \left|Ric^- \right| \right|^*_{p,B(x,r)} \right)^{\frac{1}{2}}.
\end{equation}	
\end{lemma}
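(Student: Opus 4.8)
\emph{Sketch of a proof.} The asserted bound is a purely local statement about minimal geodesics issuing from $x$, so the plan is to pass to geodesic polar coordinates at $x$, reduce matters to a Riccati inequality on each radial ray, integrate it, and then integrate over the sphere of directions. Fix a unit vector $\theta\in T_xM$, set $\gamma_\theta(s)=\exp_x(s\theta)$, let $c(\theta)$ be the distance from $x$ to the cut point along $\gamma_\theta$, let $m(s,\theta)=\Delta r\bigl(\gamma_\theta(s)\bigr)$ be the mean curvature of the geodesic sphere (smooth for $0<s<c(\theta)$), and let $\mathcal{A}(s,\theta)$ be the volume density, so that $\partial_s\log\mathcal{A}=m$, $\mathcal{A}(s,\theta)/s^{n-1}\to1$ as $s\to0^+$, and $dV=\mathcal{A}(s,\theta)\,ds\,d\theta$; along $\gamma_\theta$ one has $\psi=(m-\bar m)_+$ with $\bar m=\frac{n-1}{s}$. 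First I would invoke the Bochner formula applied to $r$ together with the Cauchy--Schwarz bound $|\mathrm{Hess}\,r|^2\ge(\Delta r)^2/(n-1)$ to get the Riccati comparison $\partial_s m+\frac{m^2}{n-1}\le-Ric(\gamma_\theta',\gamma_\theta')\le Ric^-\bigl(\gamma_\theta(s)\bigr)$ on $(0,c(\theta))$. Subtracting the model identity $\bar m'+\frac{\bar m^2}{n-1}=0$ and using that $m=\bar m+\psi$ on the open set $\{\psi>0\}$, this becomes the sharp radial inequality
\[
\psi'+\frac{\psi^2+2\bar m\,\psi}{n-1}\ \le\ Ric^-\bigl(\gamma_\theta(s)\bigr),\qquad 0<s<c(\theta).
\]

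The heart of the argument is then to multiply this by $(2p-1)\psi^{2p-2}\mathcal{A}$ (legitimate since $p>\tfrac n2$ forces $2p-2>0$, so $\psi^{2p-1}\mathcal{A}$ is $C^1$) and to combine it with $\mathcal{A}'=m\mathcal{A}=(\bar m+\psi)\mathcal{A}$ on $\{\psi>0\}$. A short computation gives, on $(0,c(\theta))$,
\[
\bigl(\psi^{2p-1}\mathcal{A}\bigr)'\ \le\ (2p-1)\psi^{2p-2}Ric^-\mathcal{A}\ -\ \frac{2p-n}{n-1}\,\psi^{2p}\mathcal{A}\ -\ \frac{4p-n-1}{n-1}\,\bar m\,\psi^{2p-1}\mathcal{A},
\]
and the crucial observation is that the two \emph{positive} terms generated by $\mathcal{A}'/\mathcal{A}=\bar m+\psi$, namely $+\psi^{2p}\mathcal{A}$ and $+\bar m\psi^{2p-1}\mathcal{A}$, are overwhelmed by the dissipation of the Riccati inequality precisely because $p>\frac n2$ (so $2p-n>0$) and $n\ge2$ (so $4p-n-1>n-1>0$); thus both occur with nonpositive coefficients on the right. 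Keeping the $\psi^{2p}\mathcal{A}$ term and discarding the (nonpositive) $\bar m\psi^{2p-1}\mathcal{A}$ one, then integrating in $s$ from $0$ to $\min(c(\theta),r)$ — where the boundary term at $0$ vanishes since $\psi(s,\theta)\to0$ (from $m=\frac{n-1}{s}+O(s)$) and $\mathcal{A}(s,\theta)\to0$, while the one at the endpoint is $\ge0$ — yields
\[
\frac{2p-n}{n-1}\int_0^{\min(c(\theta),r)}\psi^{2p}\mathcal{A}\,ds\ \le\ (2p-1)\int_0^{\min(c(\theta),r)}\psi^{2p-2}Ric^-\mathcal{A}\,ds.
\]

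To finish, I would integrate over the unit directions $\theta$: since the cut locus has measure zero, $B(x,r)$ is up to a null set the image under $\exp_x$ of $\{s\theta:0\le s<\min(c(\theta),r)\}$, so integrating against $d\theta$ converts the radial integrals into integrals over $B(x,r)$ with respect to $dV$, giving $\frac{2p-n}{n-1}\int_{B(x,r)}\psi^{2p}\le(2p-1)\int_{B(x,r)}\psi^{2p-2}Ric^-$. Applying H\"older's inequality with exponents $\frac{p}{p-1}$ and $p$ to the right-hand side and dividing by $\bigl(\int_{B(x,r)}\psi^{2p}\bigr)^{(p-1)/p}$ (which may be assumed positive, else $\psi\equiv0$ and there is nothing to prove) leaves $\int_{B(x,r)}\psi^{2p}\le\bigl(\frac{(n-1)(2p-1)}{2p-n}\bigr)^{p}\int_{B(x,r)}(Ric^-)^{p}$, that is $\|\psi\|_{2p,B(x,r)}^{2}\le\frac{(n-1)(2p-1)}{2p-n}\|Ric^-\|_{p,B(x,r)}$, which is the first displayed estimate; dividing both sides by $|B(x,r)|^{1/(2p)}$ produces the equivalent normalized statement (\ref{2.1}). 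The substantive point is the sign miracle in the weighted computation, where the sharp constant is exactly what the competition between the exponent $2p-1$ (from differentiating $\psi^{2p-1}$) and the leftover dissipation $2p-n$ (after subtracting the Euclidean volume growth) delivers; the remaining care is with the $s\to0$ boundary term and the polar-coordinate integration across the cut locus. This is, in substance, the argument of Petersen--Wei \cite{PW1}.
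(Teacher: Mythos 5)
The paper itself gives no proof here — it simply cites Petersen--Wei \cite{PW1}, Lemma~2.2 — and your argument is a correct and essentially faithful reconstruction of the Petersen--Wei proof: Riccati inequality for $\psi$ along radial geodesics, multiplication by $\psi^{2p-2}\mathcal{A}$ to form the total derivative $(\psi^{2p-1}\mathcal{A})'$, the sign check that $p>\tfrac n2$ (and $n\ge 2$) makes both residual terms nonpositive, integration in $s$ and $\theta$, and a final H\"older step that produces exactly the constant $\tfrac{(n-1)(2p-1)}{2p-n}$. The computation of the coefficients, the vanishing of the boundary term at $s=0$ (since $\psi^{2p-1}\mathcal{A}=O(s^{2p+n-2})$), and the passage to the normalized norms by dividing by $|B(x,r)|^{1/(2p)}$ are all correct.
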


We also need to use the so-called volume doubling property as following
\begin{lemma}[{\cite{PW2}}, Theorem 2.1]\label{l2.2}
For any $p > n/2$ there exists a constant $k = k(n, p) $
such that if $\kappa(p, r)\leq k$, then for any $x\in M$ and $0<r_1 < r_2 \leq r$, we have
\begin{equation}\label{2.2}
\frac{\left| B(x,r_2)\right| }{r_2^n}\leq\frac{2\left| B(x,r_1)\right| }{r_1^n}.
\end{equation}
\end{lemma}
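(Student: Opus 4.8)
The plan is to prove the inequality by a quantitative Bishop--Gromov comparison in which the failure of monotonicity of $r\mapsto |B(x,r)|/r^{\,n}$ is controlled by the integral norm of $Ric^-$, which is the method of Petersen--Wei. I would work in geodesic polar coordinates centred at $x$ and write the volume element on the segment domain as $\mathcal{A}(s,\theta)\,ds\,d\theta$, $\theta\in S^{n-1}$, extending $\mathcal{A}$ by $0$ past the cut locus, so that $|B(x,r)|=\int_{S^{n-1}}\int_0^{r}\mathcal{A}(s,\theta)\,ds\,d\theta$. The goal is to show that for $0<r_1<r_2\le r$,
\[
\frac{|B(x,r_2)|}{r_2^{\,n}}\le E\cdot\frac{|B(x,r_1)|}{r_1^{\,n}},
\]
with $E=E(n,p,\kappa(p,r))\to1$ as $\kappa(p,r)\to0$; one then fixes $k=k(n,p)$ so small that $\kappa(p,r)\le k$ forces $E\le2$, and by Remark \ref{r1.2} the hypothesis may be restated in terms of $\kappa(p,1)$ (up to adjusting $k$).

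First I would record the radial comparison. Along a unit-speed minimizing geodesic $\gamma_\theta$ from $x$, the Bochner formula applied to the distance function gives $(\Delta r)'\le-\tfrac{(\Delta r)^2}{n-1}-Ric(\partial_r,\partial_r)$; comparing with the Euclidean model $\tfrac{n-1}{r}$ and putting $\psi:=\big(\Delta r-\tfrac{n-1}{r}\big)_+$, one discards a nonnegative term to get $\psi'+\tfrac2r\psi\le Ric^-$, hence, after multiplying by the integrating factor $r^{2}$, $\psi(r,\theta)\le r^{-2}\int_0^{r}s^{2}\,Ric^-(\gamma_\theta(s))\,ds$. This is precisely the estimate behind Lemma \ref{l2.1}, which I would invoke directly to get $\|\psi\|^*_{2p,B(x,r)}\le\big(\tfrac{(n-1)(2p-1)}{2p-n}\|Ric^-\|^*_{p,B(x,r)}\big)^{1/2}=c(n,p)\,r^{-1}\kappa(x,p,r)^{1/2}$, so $r\,\|\psi\|^*_{2p,B(x,r)}$ is small once $\kappa(p,r)$ is. Since $\psi$ dominates the logarithmic radial derivative of $\mathcal{A}(r,\theta)/r^{n-1}$, one obtains, in the barrier sense across the cut locus (Calabi's trick), for $0<\rho<r$,
\[
\mathcal{A}(r,\theta)\,\rho^{\,n-1}\le\mathcal{A}(\rho,\theta)\,r^{\,n-1}\exp\!\Big(\int_\rho^{r}\psi(s,\theta)\,ds\Big).
\]

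Next I would run Gromov's double integration: integrate this display in $\rho$ over $(0,r_1)$ and in $\theta$ over $S^{n-1}$, then in $r$ over $(r_1,r_2)$. The left side gives $\tfrac{r_1^{\,n}}{n}\big(|B(x,r_2)|-|B(x,r_1)|\big)$; the ``$1$'' in $e^{\int\psi}=1+(e^{\int\psi}-1)$ gives $\tfrac1n|B(x,r_1)|(r_2^{\,n}-r_1^{\,n})$ on the right, and rearranging yields $\frac{|B(x,r_2)|}{r_2^{\,n}}\le\frac{|B(x,r_1)|}{r_1^{\,n}}+(\text{error})$, where, using $e^{t}-1\le t\,e^{t}$ and $\int_\rho^{r}\psi\le\int_0^{r_2}\psi$, the error is bounded by a constant multiple of
\[
\frac{1}{r_1^{\,n}r_2^{\,n}}\int_{B(x,r_2)}\Big(\int_0^{r_2}\psi(s,\theta)\,ds\Big)\,e^{\int_0^{r_2}\psi(s,\theta)\,ds}\,dV .
\]
Since already with exact $Ric\ge0$ one has $\frac{|B(x,r_2)|}{r_2^{\,n}}\le\frac{|B(x,r_1)|}{r_1^{\,n}}$, it suffices to make this error term at most $\frac{|B(x,r_1)|}{r_1^{\,n}}$, i.e.\ to make $E\le2$.

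The genuinely delicate point is controlling that error term, and this is where I expect the main obstacle: a bound on $\psi$ in $L^{2p}$ over the \emph{ball} does not by itself control the line integrals $\int_0^{r_2}\psi(s,\theta)\,ds$ on individual rays, because in the collapsed regime --- precisely our setting --- there is no lower bound on $\mathcal{A}(s,\theta)$. The way around it is to keep the exponential inside the integral, write $\int_0^{r_2}\psi\,ds\le r_2^{\,1-1/(2p)}\big(\int_0^{r_2}\psi^{2p}\,ds\big)^{1/(2p)}$, use the Bishop-type bound from Step~1 to dominate $\mathcal{A}(s,\theta)$ by $\mathcal{A}(r,\theta)(s/r)^{n-1}e^{\int\psi}$ so that unweighted radial integrals of $\psi^{2p}$ get absorbed into volume-weighted ones, and apply Hölder in the remaining variables against the weight $\mathcal{A}$; the error then re-expresses through $\|\psi\|^*_{2p,B(x,r_2)}$ and powers of the volume ratio $|B(x,r_2)|/|B(x,r_1)|$ and of $e^{\int\psi}$. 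A short bootstrap closes the loop: a first crude pass with the exponential replaced by a fixed large constant yields an a priori bound on the volume ratio, and reinserting it gives $E=1+C(n,p)\,\kappa(p,r)^{1/2}\big(1+o(1)\big)$ as $\kappa(p,r)\to0$. Choosing $k=k(n,p)$ with $C(n,p)\,(2^{1/p}k)^{1/2}$ small enough that $E\le2$ whenever $\kappa(p,r)\le k$ (equivalently $\kappa(p,1)\le k$, by Remark \ref{r1.2}) gives $\frac{|B(x,r_2)|}{r_2^{\,n}}\le\frac{2|B(x,r_1)|}{r_1^{\,n}}$, completing the proof; Steps~1--2 are standard comparison-geometry bookkeeping, with the $L^{2p}$-estimate already supplied by Lemma \ref{l2.1}.
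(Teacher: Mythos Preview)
The paper does not prove this lemma at all: it is stated with the citation ``[\cite{PW2}, Theorem 2.1]'' and no argument is given, so there is nothing in the paper to compare your proposal against. Your outline is recognizably the Petersen--Wei relative volume comparison, and the first two steps (the ODE for $\psi$, the area-element inequality $\mathcal{A}(r,\theta)\rho^{n-1}\le \mathcal{A}(\rho,\theta)r^{n-1}\exp(\int_\rho^r\psi)$, and Gromov's double integration) are set up correctly.

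That said, the third paragraph is where an actual proof has to happen, and what you have written there is not one. The sentence ``use the Bishop-type bound from Step~1 to dominate $\mathcal{A}(s,\theta)$ by $\mathcal{A}(r,\theta)(s/r)^{n-1}e^{\int\psi}$ so that unweighted radial integrals of $\psi^{2p}$ get absorbed into volume-weighted ones'' is the heart of the matter, but as stated it is circular: the exponential factor you are trying to control reappears as a weight in the very inequality you want to use to control it, and your ``short bootstrap'' is asserted rather than carried out. In Petersen--Wei the argument is organized differently: one works directly with the function $y(r)=\big(|B(x,r)|/\omega_n r^n\big)^{1/(2p)}$, derives a differential inequality $y'\ge -C(n,p)\,\|\psi\|_{2p,B(x,r)}\,|B(x,r)|^{-1/(2p)}$ from H\"older applied to $\frac{d}{dr}\log(\mathcal{A}/r^{n-1})\le\psi$ integrated against the area measure, and then integrates using the $L^{2p}$ bound on $\psi$ from Lemma~\ref{l2.1}; the smallness of $\kappa(p,r)$ enters linearly and no bootstrap or exponential is needed. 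If you want a self-contained proof, that route is both shorter and avoids the vagueness in your final step.
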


As we know, the norm $\left| \left| f\right| \right|^*_{p,B(x,r)}$ is non-decreasing in $p\geq1$ for fixed $f$ and $B(x,r)$, but by the proof of Theorem 2.1 of \cite{LS}, a type of inverse inequality about $p$ also holds true under a maximum condition and volume doubling property. The following is a refined result from Theorem 2.1 in \cite{LS}, actually, it has been used in the proof of Theorem 5.3 of \cite{DWZ}.
\begin{lemma}\label{l2.3}
Let the manifold $M$ satisfy volume doubling property. If for any $\frac{1}{2}\leq\theta\leq\frac{4}{5}-\delta$ with $0<\delta\leq\frac{1}{2}$, there exist some constants $K_1$, $K_2$, $t>s>0$ and $R>0$ such that the positive function $v$ satisfies
\begin{equation}\label{2.3'}
\sup\limits_{B(x,\theta R)}v\leq \left( K_1\delta^{-K_2}\right) ^{\frac{1}{t}}\left| \left| v\right| \right|^*_{t,B\left( x,\left( \theta+\delta\right)R \right) },
\end{equation}
then there exists a constant $C=C(K_1,K_2,n,s,t,\tau)$ such that
\begin{equation}\label{2.4'}
\sup\limits_{B\left( x,\left( \frac{4}{5}-\tau\right)  R\right) }v\leq C\left| \left| v\right| \right|^*_{s,B\left( x,R \right) }
\end{equation}
for any constant $0<\tau\leq\frac{3}{10}$.
\end{lemma}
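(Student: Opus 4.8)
The plan is to run the classical Li--Schoen exponent-reduction scheme: starting from the hypothesized sup-bound at exponent $t$ over the slightly larger ball $B(x,(\theta+\delta)R)$, we iterate over a geometric sequence of radii and trade it down to the $L^{s}$-norm over $B(x,R)$. We may of course assume $\|v\|^*_{s,B(x,R)}<\infty$, since otherwise there is nothing to prove. The engine is the elementary interpolation inequality: for $0<s<t$ and any ball $B_{\rho}=B(x,\rho)$ on which $v$ is bounded,
\begin{equation*}
\fint_{B_{\rho}}v^{t}=\fint_{B_{\rho}}v^{t-s}\cdot v^{s}\le\Big(\sup_{B_{\rho}}v\Big)^{t-s}\fint_{B_{\rho}}v^{s},
\qquad\text{i.e.}\qquad
\|v\|^*_{t,B_{\rho}}\le\Big(\sup_{B_{\rho}}v\Big)^{\frac{t-s}{t}}\big(\|v\|^*_{s,B_{\rho}}\big)^{\frac{s}{t}}.
\end{equation*}
Applying this on $B_{\rho}$ with $\rho=(\theta+\delta)R\in[\tfrac12 R,\tfrac45 R]$, then inserting it into the hypothesis $(\ref{2.3'})$ and using the volume doubling property in the form $|B_{R}|/|B_{\rho}|\le 2(R/\rho)^{n}\le 2^{n+1}$ (so that $\|v\|^*_{s,B_{\rho}}\le 2^{(n+1)/s}\|v\|^*_{s,B_{R}}$), I obtain --- writing $F(\rho):=\sup_{B(x,\rho)}v$, $L:=\|v\|^*_{s,B(x,R)}$, $\alpha:=\tfrac{t-s}{t}$, $\beta:=\tfrac{s}{t}=1-\alpha$ and $\mu:=K_{2}/t$ --- the recursion
\begin{equation*}
F(\theta R)\le C_{0}\,\delta^{-\mu}\,F\big((\theta+\delta)R\big)^{\alpha}\,L^{\beta},\qquad C_{0}:=(K_{1}2^{n+1})^{1/t},
\end{equation*}
valid for every admissible pair $(\theta,\delta)$.

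Next I would iterate this recursion along the radii $\theta_{i}:=\tfrac45-\tau 2^{-i}$ for $i\ge 0$, so that $\theta_{0}=\tfrac45-\tau$, $\theta_{i}\nearrow\tfrac45$, and $\delta_{i}:=\theta_{i+1}-\theta_{i}=\tau 2^{-i-1}$; one checks directly that $\theta_{i}\in[\tfrac12,\tfrac45-\delta_{i}]$ and $\delta_{i}\le\tfrac12$ (here $0<\tau\le\tfrac{3}{10}$ is used), so the recursion applies with $(\theta,\delta)=(\theta_{i},\delta_{i})$ at every step. Taking logarithms and writing $a_{i}:=\log F(\theta_{i}R)$ turns the recursion into $a_{i}\le P+(i+1)Q+\alpha\,a_{i+1}+\beta\log L$, where $P:=\log(C_{0}\tau^{-\mu})$ and $Q:=\mu\log2$. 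Iterating from $i=0$ to $m-1$ gives
\begin{equation*}
a_{0}\le (P+\beta\log L)\sum_{j=0}^{m-1}\alpha^{j}+Q\sum_{j=0}^{m-1}(j+1)\alpha^{j}+\alpha^{m}a_{m}.
\end{equation*}
Since $\alpha\in(0,1)$, the two sums converge to $\tfrac{1}{1-\alpha}=\tfrac1\beta$ and $\tfrac{1}{(1-\alpha)^{2}}=\tfrac{1}{\beta^{2}}$; the decisive bookkeeping identity is $1-\alpha=\beta$, which makes the coefficient of $\log L$ in the limit equal to exactly $1$. Thus, provided the remainder $\alpha^{m}a_{m}\to 0$ as $m\to\infty$, letting $m\to\infty$ yields $\log F(\theta_{0}R)\le\tfrac{P}{\beta}+\tfrac{Q}{\beta^{2}}+\log L$, i.e.
\begin{equation*}
\sup_{B(x,(\frac45-\tau)R)}v=F(\theta_{0}R)\le C\,\|v\|^*_{s,B(x,R)},\qquad C:=\exp\Big(\tfrac{P}{\beta}+\tfrac{Q}{\beta^{2}}\Big),
\end{equation*}
and inspection of $P,Q,\beta,\mu$ shows $C$ depends only on $K_{1},K_{2},n,s,t,\tau$, as required.

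The step I expect to be the main obstacle is making sure the remainder $\alpha^{m}a_{m}$ genuinely vanishes, i.e., controlling the growth of $a_{m}=\log F(\theta_{m}R)$ in $m$. For this I would record a crude bound: applying the hypothesis $(\ref{2.3'})$ with $\theta=\theta_{m}$ and the single choice $\delta=\tfrac45-\theta_{m}=\tau 2^{-m}$ (an admissible pair, since then $\theta+\delta=\tfrac45$), together with $N:=\|v\|^*_{t,B(x,\frac45 R)}<\infty$ --- which holds automatically in every setting where we invoke this lemma, as there $v$ is smooth --- one gets $F(\theta_{m}R)\le (K_{1}\tau^{-K_{2}})^{1/t}\,2^{mK_{2}/t}\,N$, so that $a_{m}$ is bounded by an affine function of $m$ and hence $\alpha^{m}a_{m}\to 0$. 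The remaining work is purely mechanical: carefully tracking the $\fint$-normalizations through the doubling estimate, and checking that every $(\theta_{i},\delta_{i})$ --- and the pair used in the crude bound --- lies in the admissible range of the hypothesis.
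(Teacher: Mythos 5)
Your proof is correct and uses the same Li--Schoen exponent-reduction iteration as the paper, along the same radii $\theta_i=\tfrac45-\tau 2^{-i}$ with the same interpolation inequality $\|v\|^*_{t,B_\rho}\le\bigl(\sup_{B_\rho}v\bigr)^{(t-s)/t}\bigl(\|v\|^*_{s,B_\rho}\bigr)^{s/t}$; the only cosmetic difference is that you track $\log\sup v$ and pass to $\|v\|^*_{s,B_R}$ immediately via doubling, while the paper iterates multiplicatively with $M(\theta)=\sup_{B_{\theta R}}v^t$ and normalizes by $|B_{R/2}|^{-1}$ until the very end. You are in fact slightly more careful than the paper's proof at the limit step: the paper lets $j\to\infty$ and silently treats $M(\theta_j)^{\lambda^j}\to 1$, whereas your crude affine-in-$m$ bound on $a_m$ (obtained by one extra application of the hypothesis with $\theta+\delta=\tfrac45$) explicitly justifies that the remainder $\alpha^m a_m$ vanishes.
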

\begin{proof}
Let $B_r$ and $\left|B_r \right|$ denote $B(x, r)$ and $\left|B(x, r) \right|$ respectively. By (\ref{2.3'}), we have
\begin{equation*}
\sup\limits_{B_{\theta R}}v^t\leq K_1\delta^{-K_2}\fint_{B_{\left( \theta+\delta\right)R}} v^t,
\end{equation*}
since $\theta+\delta\geq\frac{1}{2}$, then there holds
\begin{equation}\label{2.8'}
\sup\limits_{B_{\theta R}}v^t\leq K_1\delta^{-K_2}\left| B_{\frac{R}{2}}\right|^{-1} \int_{B_{\left( \theta+\delta\right)R}} v^t.
\end{equation}
On the other hand, we have
\begin{equation}\label{2.9'}
\int_{B_{\left( \theta+\delta\right)R}} v^t\leq\sup\limits_{B_{\left( \theta+\delta\right)R}}v^{t-s}\int_{B_{\left( \theta+\delta\right)R}} v^s\leq\left( \sup\limits_{B_{\left( \theta+\delta\right)R}}v^t\right)^{\frac{t-s}{t}} \int_{B_\frac{4R}{5}} v^s.
\end{equation}
Now, we set $$M(\theta)=\sup\limits_{B_{\theta R}}v^t\quad\quad \mbox{and}\quad\quad N=\left| B_{\frac{R}{2}}\right|^{-1} \int_{B_\frac{4R}{5}} v^s.$$
From (\ref{2.8'}) and (\ref{2.9'}) we get
\begin{equation}\label{2.10'}
M(\theta)\leq NK_1\delta^{-K_2}\left( M(\theta+\delta)\right)^{\frac{t-s}{t}}.
\end{equation}
Choosing $\theta_0=\frac{4}{5}-\tau\geq\frac{1}{2}$, $\delta_{i-1}=2^{-i}\tau$ and $\theta_i=\theta_{i-1}+2^{-i}\tau$ for $i=1,2,3,\dots\ldots$, and denoting $\frac{t-s}{t}$ by $\lambda$, then we have
\begin{equation*}
M(\theta_{i-1})\leq 2^{iK_2}\tau^{-K_2}NK_1\left( M(\theta_i)\right)^\lambda.
\end{equation*}
Iterating step by step, for positive integer $j$ it holds that
\begin{equation}\label{2.11'}
M(\theta_0)\leq2^{K_2\sum^{j}_{i=1}i\lambda^{i-1}}\left(\tau^{-K_2} NK_1\right)^{\sum^{j}_{i=1}\lambda^{i-1}}M(\theta_j)^{\lambda^j}.
\end{equation}
Letting $j\rightarrow+\infty$, then $\theta_j\rightarrow \frac{4}{5}$, $\lambda^j\rightarrow 0$ and $\sum^{j}_{i=1}\lambda^{i-1}\rightarrow \frac{t}{s}$. Then, from (\ref{2.11'}) it turns out that
\begin{equation*}
M(\theta_0)\leq C(K_2, \tau,t,s)K_1^{\frac{t}{s}}\left( \left| B_{\frac{R}{2}}\right|^{-1} \int_{B_\frac{4R}{5}} v^s\right)^\frac{t}{s},
\end{equation*}
and it leads to
\begin{equation}\label{2.12'}
\sup\limits_{B_{\left( \frac{4}{5}-\tau\right) R}}v\leq C(K_2, \tau,t,s)^{\frac{1}{t}}K_1^{\frac{1}{s}}\left( \left| B_{\frac{R}{2}}\right|^{-1} \int_{B_\frac{4R}{5}} v^s\right)^{\frac{1}{s}}.
\end{equation}
By Lemma \ref{l2.2}, there holds
\begin{equation*}
\left| B_{\frac{R}{2}}\right|^{-1}\leq2\left( \frac{8}{5}\right)^n \left| B_\frac{4R}{5}\right|^{-1},
\end{equation*}
then we obtain (\ref{2.4'}) at once.
\end{proof}

\section{\textbf{Proofs of Theorem \ref{th1.2}, Corollary \ref{c1.2'} and  \ref{c1.1}}}
Now we are ready to give the whole proof of Theorem \ref{th1.2} and the basic ideas of iteration follow from  \cite{DWZ, DiW, Wy}.

\begin{proof}[\textbf{Proof of Theorem \ref{th1.2}}]
By scaling we may assume $r=1$. For simplicity, we denote $B(x,r)$
by $B_r$, $\left| Ric^-\right|$ by $R_c$, and if there has no special emphasis on integral domain, we always calculate on $B_1$ and set
$$h=\log u.$$
Let $N=\sup\limits_{B_1}\left| b\right|+1 $, and $v=\left| \nabla h\right|^2+N$, then (\ref{equ}) gives
\begin{equation}\label{2.5}
\Delta h+\left| \nabla h\right|^2+ah+b=0,
\end{equation}
and
\begin{equation}\label{2.6}
\Delta h=N-v-ah-b.
\end{equation}
By Bochner formula, we have
\begin{align}\label{2.7}
\frac{1}{2}\Delta v=\frac{1}{2}\Delta \left| \nabla h\right|^2&=\left|D^2h \right|^2+\left\langle \nabla h, \nabla\Delta h\right\rangle +Ric\left(\nabla h, \nabla h\right)\nonumber\\
&\geq\frac{(\Delta h)^2}{n}+\left\langle \nabla h, \nabla\Delta h\right\rangle-R_c\left| \nabla h\right|^2.
\end{align}
Substituting (\ref{2.6}) into (\ref{2.7}) and noting $\left| \nabla h\right|^2=v-N$, we have
\begin{equation}\label{2.8}
\begin{split}
\frac{1}{2}\Delta v\geq&\frac{(N-v-ah-b)^2}{n}-\left\langle \nabla h, \nabla v\right\rangle-a(v-N)\\
&-h\left\langle \nabla h, \nabla a\right\rangle-\left\langle \nabla h, \nabla b\right\rangle-R_c(v-N).
\end{split}
\end{equation}
By virtue of the following inequalities:
$$(ah+b-N)^2\geq0,$$
$$h\left\langle \nabla h, \nabla a\right\rangle\leq\frac{\left|h \right| }{2}\left( v-N+\left| \nabla a\right|^2 \right),$$
$$\left\langle \nabla h, \nabla b\right\rangle\leq\frac{1 }{2}\left( v-N+\left| \nabla b\right|^2 \right),$$
and noting $R_cN\geq0$, $\left|h \right|N\geq0$, $a\geq-\left|a \right|$ and $N \geq 0$, then, from (\ref{2.8}) we derive
\begin{equation}\label{2.9}
\begin{split}
\frac{1}{2}\Delta v\geq&\frac{v^2+2ahv+2(b-N)v}{n}-\left\langle \nabla h, \nabla v\right\rangle-av-R_cv\\
&-\frac{v}{2}-\frac{\left|h \right|v}{2}-\frac{\left|h \right|\left|\nabla a \right|^2}{2}-\frac{\left|\nabla b \right|^2}{2}-\left| a\right| N.
\end{split}
\end{equation}
Next, for any $l\geq0$ and $\eta\in C^\infty_0(B_1)$, we multiply by $\eta^2v^l$ on both sides of (\ref{2.9}), then
\begin{equation}\label{2.10}
\begin{split}
\int\frac{1}{2}\eta^2v^l\Delta v
&\geq\int\eta^2v^l\left\lbrace \frac{v^2+2ahv+2(b-N)v}{n}-\left\langle \nabla h, \nabla v\right\rangle-av-\frac{\left|h \right|v}{2}-\frac{\left|h \right|\left|\nabla a \right|^2}{2}\right\rbrace\\
&\quad-\int\eta^2v^l \left\lbrace\frac{\left|\nabla b \right|^2}{2}+\frac{v}{2}+\left| a\right| N+R_cv\right\rbrace.
\end{split}
\end{equation}
By Green formula, we have
$$\int\eta^2v^{l+1}\Delta h=-\int \left\langle \nabla\left( \eta^2v^{l+1}, \nabla h\right) \right\rangle =-\int (l+1)\eta^2v^l\left\langle \nabla v, \nabla h\right\rangle-\int 2\eta v^{l+1}\left\langle \nabla\eta, \nabla h\right\rangle,$$
so we derive from (\ref{2.6}) and Cauchy-Schwartz inequality that
\begin{align}
\int\eta^2v^l\left\langle \nabla v, \nabla h\right\rangle&=-\frac{1}{l+1}\int\eta^2v^{l+1}\Delta h-\frac{2}{l+1}\int v^{l+1}\left\langle \nabla\eta, \eta\nabla h\right\rangle\nonumber\\
&\leq -\frac{1}{l+1}\int\eta^2v^{l+1}\left( N-v-ah-b\right)+\frac{1}{l+1}\int v^{l+1}\left(\left|\nabla\eta \right|^2 +\eta^2\left|\nabla h \right|^2\right)\nonumber\\
&\leq \frac{2}{l+1}\int\eta^2v^{l+2}+\frac{1}{l+1}\int v^{l+1}\left|\nabla\eta \right|^2+\frac{1}{l+1}\int\left| a\right| \left| h\right|\eta^2v^{l+1}.\label{2.11}
\end{align}
For the second inequality, we use the facts that $\left|\nabla h \right|^2=v-N$ and $2N-b>0$.

Since $v\geq1$, we have $v^{l+1}\geq v^l$. Then by combining (\ref{2.10}) and (\ref{2.11}), there holds
\begin{equation}\label{2.12}
\begin{split}
\int\frac{1}{2}\eta^2v^l\Delta v\geq&\int\eta^2v^{l+2}\left(\frac{1}{n}-\frac{2}{l+1} \right)-\int\left| a\right| \left| h\right| \eta^2v^{l+1}\left(\frac{2}{n}-\frac{1}{l+1} \right)-\frac{1}{l+1}\int v^{l+1}\left|\nabla\eta \right|^2\\
&+\frac{2}{n}\int\eta^2v^{l+1}\left( b-N\right)-\int\frac{\left|h \right|\left|\nabla a \right|^2}{2}\eta^2v^{l+1}-\int R_c\eta^2v^{l+1}\\
&-\int\left( \left| a\right| +\frac{1}{2}+\frac{\left|\nabla b \right|^2}{2}+\left| a\right|N\right)\eta^2v^{l+1} .
\end{split}
\end{equation}
Let $l\geq 2n-1$, then $\frac{1}{n}-\frac{2}{l+1}\geq0$ and $0<\frac{2}{n}-\frac{1}{l+1}\leq1$. As a consequence, we deduce from (\ref{2.12}) that
\begin{equation}\label{2.13}
\begin{split}
\int\frac{1}{2}\eta^2v^l\Delta v\geq&-\sup\limits_{B_1}\left| a\right|\int \left| h\right| \eta^2v^{l+1}-\frac{1}{l+1}\int v^{l+1}\left|\nabla\eta \right|^2-\sup\limits_{B_1}\left(N-b\right)\frac{2}{n}\int\eta^2v^{l+1}\\
&-\sup\limits_{B_1}\frac{\left|\nabla a \right|^2}{2}\int\left|h \right|\eta^2v^{l+1}-\int R_c\eta^2v^{l+1}\\
&-\sup\limits_{B_1}\left( \left| a\right| +\frac{1}{2}+\frac{\left|\nabla b \right|^2}{2}+\left| a\right|N\right)\int\eta^2v^{l+1}.
\end{split}
\end{equation}
By (5.11) of \cite{DWZ}, making use of integration by parts, we have the following general integral inequality
\begin{equation}\label{2.14}
\int\left| \nabla\left( \eta v^{\frac{l+1}{2}}\right) \right|^2\leq-\frac{\left( l+1\right)^2 }{2l}\int\eta^2v^l\Delta v+\frac{\left( l+1\right)^2+l }{l^2} \int v^{l+1}\left| \nabla\eta\right|^2-\frac{l+1}{l}\int \eta v^{l+1}\Delta \eta.
\end{equation}
Then by (\ref{2.14}), we infer from (\ref{2.13}) that there exists a constant $C_1=C_1(D_1,D_2,D_3,D_4)$ such that
\begin{equation}\label{2.15}
\begin{split}
\int\left| \nabla\left( \eta v^{\frac{l+1}{2}}\right) \right|^2\leq&\frac{l+1}{l}\int v^{l+1}\left| \nabla\eta\right|^2+\frac{(l+1)^2C_1}{l}\int\left| h\right| \eta^2v^{l+1}+\int R_c\eta^2v^{l+1}\\
&+\left\lbrace \frac{(l+1)^2}{l}+\frac{2(l+1)^2}{nl}\right\rbrace C_1\int \eta v^{l+1}\\
&+\frac{\left( l+1\right)^2+l }{l^2} \int v^{l+1}\left| \nabla\eta\right|^2-\frac{l+1}{l}\int \eta v^{l+1}\Delta \eta.
\end{split}
\end{equation}

Now, we need to control $\Delta\eta$. To this end, for $0<r<1$, let $\phi\in C^\infty_0\left( [ 0, +\infty)\right) $ be a cut-off function such that
$$0\leq\phi\leq1, \quad\hspace*{0.3em}\phi(t)\equiv1\hspace*{0.3em} \mbox{for}\hspace*{0.3em}0\leq t\leq r,\quad\hspace*{0.3em}\phi(t)\equiv0\hspace*{0.3em}\mbox{for}\hspace*{0.3em}t\geq1\hspace*{0.3em}\quad\mbox{and}\quad\hspace*{0.3em}\phi'\leq0.$$
Let $\eta(y)=\phi(d(x,y))$, then $\left| \nabla\eta\right|=\left| \phi'\right|$ and for $\psi(y)=\left(\Delta d-\frac{n-1}{d} \right)_+ $ there holds
\begin{equation}\label{2.16}
\begin{split}
\Delta\eta&=\phi''+\phi'\Delta d=\phi''+\phi'\left( \Delta d-\frac{n-1}{d}+\frac{n-1}{d}\right)\\
&\geq\phi''+\phi'\left( \psi+\frac{n-1}{d}\right)\geq-\left|\phi'' \right|-\frac{(n-1)\left|\phi' \right| }{r}-\left| \phi'\right|\psi.
\end{split}
\end{equation}
By the well-known Calabi's trick, without loss of generality, we can assume $\eta\in C^\infty_0(B_1)$. Combining (\ref{2.15}) and (\ref{2.16}), we know there exists a constant $C_2=C_2(C_1,n)$ such that
\begin{equation}\label{2.17}
\begin{split}
&\int\left| \nabla\left( \eta v^{\frac{l+1}{2}}\right) \right|^2\\
&\leq C_2l\int\left[ \left( \left| \phi''\right| +\frac{\left|\phi' \right| }{r}+\left|  \phi'\right|\psi \right) \eta v^{l+1}+\left| \phi'\right|^2v^{l+1} +R_c\eta^2v^{l+1}+\left| h\right|\eta^2v^{l+1}+\eta^2v^{l+1}\right].
\end{split}
\end{equation}Theorem \ref{th1.1} tells us that there exists a small enough positive number $k(n, p)$ such that (\ref{1.4}) holds true if $\kappa<k(n, p)$. For $n\geq3$, we denote $\frac{n}{n-2}>1$ by $\mu$ temporarily.
Then by (\ref{1.4}), i.e. letting $f=\eta v^{\frac{l+1}{2}}$ and $r=1$, we have
\begin{equation}\label{2.18}
\begin{split}
&\left( \fint\left( \eta^2v^{l+1}\right)^\mu \right)^{\frac{1}{\mu}}\\
&\leq C_S^2C_2l\fint\left[ \left( \left| \phi''\right| +\frac{\left|\phi' \right| }{r}+\left|  \phi'\right|\psi \right) \eta v^{l+1}+\left| \phi'\right|^2v^{l+1}+R_c\eta^2v^{l+1}+\left| h\right|\eta^2v^{l+1}+\eta^2v^{l+1}\right].
\end{split}
\end{equation}
For $n=2$, applying (\ref{1.3}) to $f^3$ yields
\begin{equation*}
	\begin{split}
		\left( \fint f^6\right)^{\frac{1}{2}}\leq C_S\fint \left|\nabla f^3 \right|=3C_S\fint \left|f^2\nabla f \right|&\leq3C_S\left( \fint \left|f^4 \right|\right)^{\frac{1}{2}}\left( \fint \left| \nabla f \right|^2\right)^{\frac{1}{2}}\\
		&\leq3C_S\left( \fint \left|f^6 \right|\right)^{\frac{1}{3}}\left( \fint \left| \nabla f \right|^2\right)^{\frac{1}{2}},
	\end{split}
\end{equation*}
for the last inequality, we use the monotone inequality $\left| \left| f\right| \right|^*_4\leq\left| \left| f\right| \right|^*_6$.
So by (\ref{2.17}) we also have
\begin{equation}\label{2.19}
	\begin{split}
		&\left( \fint \left( \eta^2v^{l+1}\right) ^3\right) ^{\frac{1}{3}}\\
		&\leq C_S^2C_2l\fint\left[ \left( \left| \phi''\right| +\frac{\left|\phi' \right| }{r}+\left|  \phi'\right|\psi \right) \eta v^{l+1}+\left| \phi'\right|^2v^{l+1}+R_c\eta^2v^{l+1}+\left| h\right|\eta^2v^{l+1}+\eta^2v^{l+1}\right],
	\end{split}
\end{equation}
i.e., when $n=2$, we can choose $\mu=3$ for (\ref{2.18}). But we need to point out that $C_2$ in (\ref{2.19}) is 9 times larger than $C_2$ in (\ref{2.18}).

To make our expressions more concise, from now on, we define
\begin{equation*}
n'=
\left\{
\begin{array}{lr}
n ,&n\geq3,\\
3 ,&n=2;
\end{array}\quad and \quad
\right.	
\mu=
\left\{
\begin{array}{lr}
\frac{n}{n-2} ,&n\geq3,\\
3 ,&n=2.
\end{array}
\right.
\end{equation*}	
Next, by H\"{o}lder's inequality and Young's inequality, it holds that
\begin{equation}\label{2.20}
\begin{split}
\fint R_c\eta^2v^{l+1}&\leq\left| \left| R_c\right| \right|^*_p\left(\fint\left( \eta^2v^{l+1}\right)^{\frac{p}{p-1}}  \right)^{\frac{p-1}{p}}\\
&\leq\left| \left| R_c\right| \right|^*_p\left(\fint  \eta^2v^{l+1}\right)^{\frac{(p-1)a}{p}}\left( \fint \left(\eta^2v^{l+1} \right)^\mu \right)^{\frac{(p-1)(1-a)}{p}}\\
&\leq\left| \left| R_c\right| \right|^*_p\left[ \varepsilon\left(\fint\left( \eta^2v^{l+1}\right)^\mu  \right)^{\frac{1}{\mu}} +\varepsilon^{-\frac{(1-a)\mu}{a}}\left(\fint \eta^2v^{l+1} \right) \right].
\end{split}
\end{equation}
Here we need to choose some $a'$ such that
$$0<a'<1\quad \mbox{and}\quad a'+(1-a')\mu=\frac{p}{p-1}.$$
Simultaneously, the above inequalities lead to
$$p>\frac{n}{2}\quad\mbox{for}\hspace*{0.3em}n\geq3\quad\mbox{and}\quad p>\frac{3}{2}\quad\mbox{for}\hspace*{0.3em}n=2.$$
Let $\varepsilon=\left( 5C_S^2C_2l\left| \left| R_c\right| \right|^*_p\right)^{-1}$, we can see easily that there exists a positive constant $C_3=C_3(C_2,n',p)$ such that
\begin{equation}\label{2.21}
C_S^2C_2l\fint R_c\eta^2v^{l+1}\leq\frac{1}{5}\left(\fint\left( \eta^2v^{l+1}\right)^\mu\right)^{\frac{1}{\mu}}+C_3\left(C_S^2l \left| \left| R_c\right| \right|^*_p\right)^{\frac{2p}{2p-n'}}\fint\eta^2v^{l+1}.
\end{equation}
Similarly, we have
\begin{equation}\label{2.22}
C_S^2C_2l\fint \left| h\right| \eta^2v^{l+1}\leq\frac{1}{5}\left(\fint\left( \eta^2v^{l+1}\right)^\mu  \right)^{\frac{1}{\mu}}+C_3\left(C_S^2l \left| \left| \left|h \right| \right| \right|^*_p\right)^{\frac{2p}{2p-n'}}\fint\eta^2v^{l+1},
\end{equation}
and
\begin{equation}\label{2.23}
C_S^2C_2l\fint \eta^2v^{l+1}\leq\frac{1}{5}\left(\fint\left( \eta^2v^{l+1}\right)^\mu  \right)^{\frac{1}{\mu}}+C_3\left(C_S^2l \right)^{\frac{2p}{2p-n'}}\fint\eta^2v^{l+1}.
\end{equation}

For the term with $\psi$, denoting $\frac{(n-1)(2p-1)}{2p-n}$ by $C(n, p)$ and using the H\"{o}lder inequality and the Laplacian comparison estimate (\ref{2.1}), we obtain
\begin{equation}\label{2.24}
\begin{split}
\fint \psi\eta\left| \phi'\right|v^{l+1}&\leq\left|\left|  \psi \right| \right|^*_{2p}\left| \left| \eta\phi'v^{l+1}\right| \right|^*_{\frac{2p}{2p-1}} \\
&\leq C(n, p)^{\frac{1}{2}}\left( \left| \left| R_c\right| \right|^*_p\right)^{\frac{1}{2}} \left| \left| \eta\phi'v^{l+1}\right| \right|^*_{\frac{2p}{2p-1}}.
\end{split}
\end{equation}
Since $p>\frac{n'}{2}$, then $0<b'=\frac{p}{\mu(2p-1)}<1$, for some $\tilde{\varepsilon}>0$ we can derive
\begin{align}
\left| \left| \eta\phi'v^{l+1}\right| \right|^*_{\frac{2p}{2p-1}}&=\left[ \fint \left( \eta^2v^{l+1}\right)^{b'\mu} \left( \left| \phi'\right|^2v^{l+1} \right)^{\frac{p}{2p-1}} \right]^{\frac{2p-1}{2p}}\nonumber\\
&\leq\left[ \left( \fint\left( \eta^2v^{l+1}\right)^\mu \right)^{b'}\left( \fint \left( \left| \phi'\right|^2v^{l+1}\right)^\frac{n'p}{n'p+2p-n'} \right)^{\frac{n'p+2p-n'}{n'(2p-1)}} \right]^\frac{2p-1}{2p}\nonumber\\\
&\leq\left[ \left( \fint\left( \eta^2v^{l+1}\right)^\mu \right)^{b'}\left( \fint  \left| \phi'\right|^2v^{l+1}\right)^{\frac{p}{2p-1}} \right]^\frac{2p-1}{2p}\nonumber\\
&\leq\tilde{\varepsilon}\left( \fint\left( \eta^2v^{l+1}\right)^\mu \right)^\frac{1}{\mu}+\frac{1}{4\tilde{\varepsilon}}\fint\left| \phi'\right|^2v^{l+1}\label{3.21*}
\end{align}
Here, we have used the monotonicity in the second inequality
$$\left| \left|\left| \phi'\right|^2v^{l+1} \right| \right|^*_{\frac{n'p}{n'p+2p-n'}}\leq\left| \left| \left| \phi'\right|^2v^{l+1}\right| \right|^*_1$$
since
$$0<\frac{n'p}{n'p+2p-n'}<1,$$
and we have also used Young's inequality and the fact $\frac{b'(2p-1)}{p}=\frac{1}{2\mu}$ in the above third inequality. Now we set
$$\tilde{\varepsilon}=\left( 5 C_2C_S^2lC(n, p)^{\frac{1}{2}}\left( \left| \left| R\right| \right|^*_p\right)^{\frac{1}{2}}\right)^{-1},$$
then
\begin{equation}\label{2.25}
C_S^2C_2l\fint \psi\eta\left| \phi'\right|v^{l+1}\\\leq\frac{1}{5}\left(\fint\left( \eta^2v^{l+1}\right)^\mu \right)^{\frac{1}{\mu}}+\frac{5}{4}\left( C_S^2C_2l\right)^2C(n, p)\left| \left| R_c\right| \right|^*_p\fint \left| \phi'\right|^2v^{l+1}.
\end{equation}
Substituting (\ref{2.21}), (\ref{2.22}), (\ref{2.23}) and (\ref{2.25}) into (\ref{2.18}), then by Remark \ref{r1.2} we can see that there holds
\begin{align}\label{2.26}
&\left(\fint\left( \eta^2v^{l+1}\right)^\mu  \right)^{\frac{1}{\mu}}\nonumber\\
&\leq5C_S^2C_2l\left[ \fint\left( \left| \phi''\right| +\frac{\left|\phi' \right| }{r} \right) \eta v^{l+1}+\left( 1+\frac{5}{4}C_S^2C_2lC(n, p)\left| \left| R_c\right| \right|^*_p\right)\fint \left| \phi'\right|^2v^{l+1}\right]\nonumber\\
&\hspace*{1em}+5C_3\left( C_S^2l\right)^{\frac{2p}{2p-n'}}\left( 1+\left(\left| \left| R_c\right| \right|^*_p  \right)^\frac{2p}{2p-n'} +\left(\left| \left| h\right| \right|^*_p  \right)^\frac{2p}{2p-n'} \right)\fint\eta^2v^{l+1}\nonumber\\
&\leq5C_S^2C_2l\left[ \fint\left( \left| \phi''\right| +\frac{\left|\phi' \right| }{r} \right) \eta v^{l+1}+\left( 1+\frac{5}{4}C_S^2C_2lC(n, p)2^{\frac{1}{p}}\kappa(p,1)\right)\fint \left| \phi'\right|^2v^{l+1}\right]\nonumber\\
&\hspace*{1em}+5C_3\left( C_S^2l\right)^{\frac{2p}{2p-n'}}\left( 1+\left(2^{\frac{1}{p}}\kappa(p,1)\right)^\frac{2p}{2p-n'} +\left(\left| \left| h\right| \right|^*_p  \right)^\frac{2p}{2p-n'} \right)\fint\eta^2v^{l+1}
\end{align}

For integers $j\geq0$, we define $$r_j= \theta+\delta-\delta\left( \sum^j\limits_{i=0}2^{-i-1}\right) $$
where $\theta,\delta$ are two constants such that $0<\delta\leq\frac{1}{2}$ and $\frac{1}{2}\leq\theta\leq\frac{4}{5}-\delta$, then $\theta<r_j<\theta+\delta$. Moreover, we can choose $\eta_j(y)=\phi_j(d(y))\in C^\infty_0(B_{r_{j-1}})$ such that
$$\phi_j\equiv1\quad\mbox{on}\hspace*{0.3em}B_{r_j},\quad \hspace*{0.3em}\phi_j\equiv0\quad\mbox{on}\hspace*{0.3em}B_1\setminus B_{r_{j-1}},\,\,B_{r_{-1}}=B_{\theta+\delta}$$
and
$$\left|\phi_j'\right|\leq\frac{2^{j+3}}{\delta},\quad\quad \left| \phi_j''\right|\leq\frac{2^{2j+6}}{\delta^2}.$$
In this situation, we have $$\frac{\left|\phi_j'\right|}{r_j}\leq\frac{2^{2j+6}}{\delta^2},\quad\quad  \left|\phi_j'\right|^2\leq\frac{2^{2j+6}}{\delta^2}\quad and
\quad \eta_{j+1}\leq\eta^\mu_j\leq\eta_j$$ and especially
\begin{equation}\label{2.27}
\eta_j\equiv1
\end{equation}
on the  supports of $\eta_{j+1}$ and $\phi_{j+1}$.

It is easy to know that there exists a minimal integer $m$ such that $\mu^m\geq2n$. Set
$$q=\max{\left\lbrace 2,\frac{2p}{2p-n'}\right\rbrace }.$$
For simplicity, we denote
$$A\equiv \delta^{-2}\left[ 5C_S^2C_2\left(3+\frac{5}{4}C_S^2C_2C(n, p)2^{\frac{1}{p}}\kappa(p,1)\right)+5C_3\left( C_S^2\right)^{\frac{2p}{2p-n'}}\left( 1+\left(2^{\frac{1}{p}}\kappa(p,1)\right)^\frac{2p}{2p-n'} +\left(\left| \left| h\right| \right|^*_p  \right)^\frac{2p}{2p-n'} \right)\right]$$ and
$$B\equiv5C_S^2C_2\left(3+\frac{5}{4}C_S^2C_2C(n, p)2^{\frac{1}{p}}\kappa(p,1)\right)+5C_3\left( C_S^2\right)^{\frac{2p}{2p-n'}}\left( 1+\left(2^{\frac{1}{p}}\kappa(p,1)\right)^\frac{2p}{2p-n'} +\left(\left| \left| h\right| \right|^*_p  \right)^\frac{2p}{2p-n'} \right).$$
Then, according to (\ref{2.26}), it holds that
\begin{equation}\label{2.28}
\left(\fint\left( \eta^2_{j+1}v^{l+1}\right)^\mu\right)^{\frac{1}{\mu}}\leq Al^q2^{2j+6}\fint \eta^{2\mu}_jv^{l+1}
\leq A(l+1)^q2^{2j+6}\fint \eta^{2\mu}_jv^{l+1}.
\end{equation}
Now, (\ref{2.26}) is valid for all $l+1\geq\mu^m$, i.e. for $l+1=\mu^m,\,\mu^{m+1},\,\mu^{m+2},\cdots\cdots$. By (\ref{2.27}) and (\ref{2.28}) we can infer
\begin{equation}\label{2.29}
\left( \fint \eta_1^{2\mu}v^{\mu^{m+1}}\right\rbrace ^{\frac{1}{\mu}}\leq A\mu^{mq}2^{6}\fint\eta^2_0v^{\mu^m},
\end{equation}
and
\begin{equation*}
\begin{split}
\left(\fint\eta_2^{2\mu}v^{\mu^{m+2}}\right)^{\frac{1}{\mu}}\leq A\mu^{(m+1)q}2^{8}\fint \eta_1^{2\mu}v^{\mu^{m+1}}
&\leq A\mu^{(m+1)q}2^{8}\left( A\mu^{mq}2^{6}\fint\eta^2_0v^{\mu^m}\right)^\mu\\
&=A^{1+\mu}\mu^{\mu mq+(m+1)q}2^{6\mu+8}\left( \fint\eta^2_0v^{\mu^m}\right)^\mu,
\end{split}
\end{equation*}
i.e.
\begin{equation}\label{2.30}
\left(\fint\eta_2^{2\mu}v^{\mu^{m+2}}\right)^{\frac{1}{\mu^2}}\leq A^{\frac{1+\mu}{\mu}}\mu^{\frac{\mu mq+(m+1)q}{\mu}}2^{\frac{6\mu+8}{\mu}}\fint\eta^2_0v^{\mu^m}.
\end{equation}
Furthermore, by repeating the above iteration step by step we can infer that for any positive integer $j$ there holds true
\begin{equation}\label{2.31}
\left(\fint\eta_{j+1}^{2\mu}v^{\mu^{m+j+1}}\right)^{\frac{1}{\mu^{j+1}}}\leq A^{\frac{R_j}{\mu^j}}\mu^{\frac{S_j}{\mu^j}}2^{\frac{T_j}{\mu^j}}\fint\eta^2_0v^{\mu^m}
\end{equation}
where
$$R_j=\sum^j\limits_{i=0}\mu^i,$$
$$S_0=mq,\quad S_j=\mu S_{j-1}+(m+j)q,$$
$$T_0=6,\quad T_j=\mu T_{j-1}+6+2j.$$
Letting $j\rightarrow+\infty$, then it's easy to see that $\frac{R_j}{\mu^j}\rightarrow\frac{n'}{2}$. As for $S_j$, there holds
$$0<\frac{S_{j+1}}{\mu^{j+1}}-\frac{S_j}{\mu^j}=\frac{(m+j)q}{\mu^j}\leq\frac{2jq}{\mu^j}\quad\quad \mbox{for}\,\,j\geq m,$$
so we conclude that $\frac{S_j}{\mu^j}$ converges to a number which is only related to $\mu$, $m$ and $q$. Similarly, $\frac{T_j}{\mu^j}$ also converges to a real number which is only related to $\mu$ and $q$. In other words, we derive that there exists a constant $C_4=C_4(\mu, n', q)$ such that
\begin{equation*}
\left| \left| v\right| \right|_{\infty,B_{\theta}}=\left| \left| v\right| \right|^*_{\infty,B_{\theta}}\leq C_4^{\frac{1}{\mu^m}}A^\frac{n'}{2\mu^m}\left| \left| v\right| \right|^*_{\mu^m,B_{\theta+\delta}}.
\end{equation*}
Then, by Lemma \ref{l2.3} we let $K_1=C_4B^{\frac{n'}{2}},\,K_2=n',\,t=\mu^m,\, s=1,\, R=1,\,\tau=\frac{3}{10}$ to obtain further that there exists a constant $C_5=C_5\left( C_4B^\frac{n'}{2},\mu,n',m\right) $ such that
\begin{equation}\label{2.32}
\left| \left| v\right| \right|_{\infty,B_{\frac{1}{2}}}\leq C_5\left| \left| v\right| \right|^*_{1,B_\frac{4}{5}}.
\end{equation}

Next, we choose some $\eta\in C^\infty_0(B_1)$ such that
$$\eta\equiv1\quad\mbox{on}\hspace*{0.3em}B_\frac{4}{5}\quad\mbox{and}\quad \left| \nabla\eta\right|\leq10.$$
For $\eta^2v$, there holds
\begin{equation}\label{2.33}
\begin{split}
\int\eta^2v=-\int\eta^2\left( \Delta h+ah+b-N\right).
\end{split}
\end{equation}
By Green's formula, we have
$$-\int\eta^2\Delta h=2\int\left\langle \nabla\eta, \eta\nabla h\right\rangle\leq\frac{1}{2}\int\eta^2v+2\int\left| \nabla\eta\right|^2\leq\frac{1}{2}\int\eta^2v+200\left| B_1\right| .$$
There also hold
$$-\int\eta^2ah\leq\sup\limits_{B_1}\left| a\right|\int\eta^2\left| h\right|\leq\sup\limits_{B_1}\left| a\right|\left| \left| h\right| \right|^*_1\left|B_1 \right|\leq\sup\limits_{B_1}\left| a\right|\left| \left| h\right| \right|^*_p\left|B_1 \right|,$$
and
$$\int\eta^2(N-b)\leq\left( 2\sup\limits_{B_1} \left| b\right|+1\right)\left|B_1 \right| $$
since $N=\sup\limits_{B_1}\left| b\right|+1$.
Then, in view of the above estimates we deduce from (\ref{2.33}) that there holds true
\begin{equation}\label{2.34}
\left| \left| v\right| \right|^*_{1,B_\frac{4}{5}}\leq\frac{\int\eta^2v}{\left|B_{\frac{4}{5}}\right|}\leq\frac{\left( 400+2\sup\limits_{B_1}\left| a\right|\left| \left| h\right| \right|^*_p+4\sup\limits_{B_1}\left|  b\right|+2\right)\left|B_1 \right|}{\left|B_{\frac{4}{5}} \right|}.
\end{equation}
Moreover, by Lemma \ref{l2.2} there exists some $k(n, p)$ small enough such that (\ref{2.2}) holds true if $\kappa<k(n, p)$. Hence, we have
$$\left|B_1 \right|\leq\frac{2\cdot5^n}{4^n}\left|B_{\frac{4}{5}}\right|.$$
Therefore, by substituting (\ref{2.34}) into (\ref{2.32}) we accomplish the entire proof of Theorem \ref{th1.2} provided $k$ is small enough such that Theorem \ref{th1.1} and Lemma \ref{l2.2} are valid.

Furthermore, if $a\equiv0$, then from (\ref{2.13}) and (\ref{2.33}), we know the terms which contain $h$ vanish, so $C$ is not relevant to $D_1,\,D_3$ and $D_5$. When $a\geq0$ and $b\geq0$ on $B_r$, we take $N=b+1$, then it follows that $aN\geq0$. Therefore, from (\ref{2.6}), (\ref{2.8}), (\ref{2.13}) and (\ref{2.33}) we know the terms which contain $b,\,\left| \nabla b\right| $ or $N$ vanish, so $C$ is not relevant to $D_2$ and $D_4$.
\end{proof}

Since the proof of Corollary \ref{c1.2'} is almost the same as Theorem \ref{th1.2}, we only need to give the outline.
\begin{proof}[\textbf{Proof of Corollary \ref{c1.2'}}]
	Let $v=\left| \nabla h\right|^2+ah$, $B'=\left\lbrace x\in B_1:v>1\right\rbrace$ and $\chi_{B'}$ be the characteristic function of $B'$. Since $B'$ is relative open in $B_1$, we may assume $v\chi_{B'}$ is smooth on $B_1$. Let $v'$ denote $v\chi_{B'}$.
	In this situation, we have
	\begin{equation}\label{2.35}
		\left( \Delta h+v+b\right)\chi_{B'}=0,
	\end{equation}
	then
	\begin{equation}\label{2.36}
		\frac{1}{2}\Delta v'=\frac{\chi_{B'}}{2}\Delta\left( \left| \nabla h\right|^2+ah\right)=\frac{\chi_{B'}}{2}\left(\Delta \left| \nabla h\right|^2+a\Delta h+h\Delta a+2\left\langle\nabla a, \nabla h \right\rangle \right).
	\end{equation}
By Bochner Formula, we have
\begin{align}\label{2.37}
\frac{1}{2}\Delta \left| \nabla h\right|^2&=\left| D^2h\right|^2+\left\langle \nabla h, \nabla \Delta h\right\rangle+Ric\left( \nabla h, \nabla h\right)\nonumber\\
&\geq\frac{1}{n}\left( \Delta h\right)^2+\left\langle \nabla h, \nabla (-v-b)\right\rangle-R_c\left( v-ah\right)\nonumber\\	&=\frac{(v+b)^2}{n}-\left\langle \nabla h, \nabla v\right\rangle-\left\langle \nabla h, \nabla b\right\rangle-R_cv+ahR_c.
\end{align}
	By Cauchy-Schwartz inequality, there hold
	\begin{equation}\label{2.38}
		\left\langle \nabla a, \nabla h\right\rangle\leq\frac{\left|\nabla a \right|^2+\left|\nabla h \right|^2}{2}=\frac{\left|\nabla a \right|^2+v-ah}{2}
	\end{equation}
	and
	\begin{equation}\label{2.39}
		\left\langle \nabla b, \nabla h\right\rangle\leq\frac{\left|\nabla b \right|^2+\left|\nabla h \right|^2}{2}=\frac{\left|\nabla b \right|^2+v-ah}{2}.
	\end{equation}
	Plugging (\ref{2.37}), (\ref{2.38}) and (\ref{2.39}) into (\ref{2.36}), we have
\begin{align}\label{2.40}
\frac{1}{2}\Delta v'\geq&\chi_{B'}\left( \frac{(v+b)^2}{n}-\left\langle \nabla h, \nabla v\right\rangle-\frac{\left|\nabla b \right|^2+v-ah}{2}-R_cv+ ahR_c\right)\nonumber\\
&+\chi_{B'}\left( \frac{a}{2}(-v-b)+\frac{h\Delta a}{2}-\frac{\left|\nabla a \right|^2+v-ah}{2}\right)\nonumber\\
\geq&\chi_{B'}\left( \frac{(v+b)^2}{n}-\left\langle \nabla h, \nabla v\right\rangle-v+ah-R_cv+ ahR_c\right)\nonumber\\
&+\chi_{B'}\left( \frac{a}{2}(-v-b)+\frac{h\Delta a}{2}-\frac{\left|\nabla a \right|^2+\left|\nabla b \right|^2}{2}\right)\nonumber\\
\geq&\chi_{B'}\left( \frac{(v+b)^2}{n}-\left\langle \nabla h, \nabla v\right\rangle-v-\left| ah\right| -R_cv+ \left| ah\right| R_c\right)\nonumber\\
&+\chi_{B'}\left( -\frac{\left| a\right| }{2}(v+b)+\frac{\left|h \Delta a\right| }{2}-\frac{\left|\nabla a \right|^2+\left|\nabla b \right|^2}{2}\right).
\end{align}
Then we can estimate (\ref{2.40}) just like to estimate (\ref{2.9}) and the only difference is that we need to deal with the term $\left| ahR\right|$ in (\ref{2.40}). To this end, we need only to note that by H\"{o}lder's inequality there holds true
	\begin{equation}\label{2.41}
		\left| \left| aR_ch\right| \right|_{p,B_1}^*\leq\sup\limits_{B_1}\left|a \right|  \left| \left| R_c\right| \right|_{2p,B_1}^*\left| \left| h\right| \right|_{2p,B_1}^*.
	\end{equation}
	i.e., we can obtain a similar integral inequality for $R_ch\eta^2v^{l+1}\chi_{B'}$ with (\ref{2.20}). Next, for $\eta\in C^\infty_0(B_1)$ with $\eta\equiv1$ on $B_{\frac{4}{5}}$ and $\left| \nabla\eta\right|\leq10$  on $B_1$, there holds
	\begin{equation*}
	\begin{split}
	\int\eta^2v'&=-\int\eta^2\left( \Delta h+b\right)\chi_{B'}\\
	&=\left( \int2\eta\left\langle \nabla\eta, \nabla h\right\rangle-\int \eta^2b\right) \chi_{B'}\\
	&\leq\left( \int2\left| \nabla\eta\right|^2+\int\frac{1}{2}\eta^2\left| \nabla h\right|^2+\int\left| b\right|\right) \chi_{B'} \\
	&\leq\left( \int2\left| \nabla\eta\right|^2+\int\frac{1}{2}\eta^2\left( \left| \nabla h\right|^2+ah\right)-\int\frac{1}{2}\eta^2ah+\int\left| b\right|\right) \chi_{B'}\\
	&\leq\left( \int2\left| \nabla\eta\right|^2+\int\frac{1}{2}\eta^2v+\frac{\left|a \right| }{2}\int \left| h\right| +\int\left| b\right|\right) \chi_{B'},
	\end{split}
	\end{equation*}
	hence $$\left\| v'\right\|^*_{B_{\frac{4}{5}}}\leq\frac{\int4\left| \nabla\eta\right|^2+\left|a \right|\int \left| h\right|+2\int\left| b\right|}{B_{\frac{4}{5}} }\leq\left(400+2\sup\limits_{B_1}\left| a\right|\left| \left| h\right| \right|^*_{2p,B_1}+2\sup\limits_{B_1}\left| b\right|  \right)\frac{B_1}{B_{\frac{4}{5}}}.$$
	In a word, after obtaining (\ref{2.40}), we can estimate $v'$ as in the proof of Theorem \ref{th1.2}, then we accomplish the proof.
\end{proof}
As for Corollary \ref{c1.1}, its proof is also almost contained in proof of Theorem \ref{th1.2}, so we only need to point out the differences.
\begin{proof}[\textbf{Proof of Corollary \ref{c1.1}}]
Let $$h=\log\frac{u}{D},\quad N=\sup\limits_{B_1}\left| a\log D+b\right|+1\quad\mbox{and}\quad v=\left| \nabla h\right|^2+ah+N,$$ then it follows that there hold true $ah\geq0$ and $v\geq1$. In this situation, (\ref{2.6}) becomes
\begin{equation}\label{3.38'}
\Delta h=N-v-b-a\log D.
\end{equation}
Since $ahR_c\geq0,\,R_cN\geq0$ and $(N-a\log D+b)^2\geq0$, then from (\ref{2.9}) we have
\begin{equation}\label{3.39'}
\begin{split}
\frac{1}{2}\Delta v\geq&\frac{v^2+2(a\log D+b-N)v}{n}-\left\langle \nabla h, \nabla \Delta h\right\rangle\\
&-\frac{a}{2}v-\frac{a}{2}(N-b-a\log D)-R_cv.
\end{split}
\end{equation}
From the inequality $$\left\langle \nabla b, \nabla h\right\rangle\leq\frac{\left|\nabla b \right|^2+\left|\nabla h \right|^2}{2}=\frac{\left|\nabla b \right|^2+v-ah}{2}\leq\frac{\left|\nabla b \right|^2+v}{2},$$
the inequality (\ref{3.39'}) can be rewritten as
\begin{equation}\label{3.39''}
\begin{split}
\frac{1}{2}\Delta v\geq&\frac{v^2+2(a\log D+b-N)v}{n}-\left\langle \nabla h, \nabla v\right\rangle-\frac{\left| \nabla b\right|^2 }{2}\\
&-\left( \frac{a}{2}+\frac{1}{2}\right) v-\frac{a}{2}(N-b-a\log D)-R_cv.
\end{split}
\end{equation}
Since $ah\geq0$, $\left|\nabla h \right|^2=v-ah-N$ and $2N-b-a\log D\geq0$,
then, from (\ref{2.11}) we can obtain
\begin{align}\label{3.40'}
\int\eta^2v^l\left\langle \nabla v, \nabla h\right\rangle =&-\frac{1}{l+1}\int\eta^2v^{l+1}\Delta h-\frac{2}{l+1}\int v^{l+1}\left\langle \nabla\eta, \eta\nabla h\right\rangle\nonumber\\
\leq & -\frac{1}{l+1}\int\eta^2v^{l+1}\left( N-v-b-a\log D\right)\nonumber\\
&+\frac{1}{l+1}\int v^{l+1}\left(\left|\nabla\eta \right|^2 +\eta^2\left|\nabla h \right|^2\right)\nonumber\\
\leq &\frac{2}{l+1}\int\eta^2v^{l+2}+\frac{1}{l+1}\int v^{l+1}\left|\nabla\eta \right|^2.
\end{align}
As for (\ref{2.33}), since $v=\Delta h+a\log D+b-N$ and $ah+N\geq0$, then for $\eta\in C^\infty_0(B_1)$ with $\eta\equiv1$ on $B_{\frac{4}{5}}$ and $\left| \nabla\eta\right|\leq10$  on $B_1$, we have
\begin{equation*}
\begin{split}
\int\eta^2v&=-\int\eta^2\left( \Delta h+a\log D+b-N\right)\\
&=\int2\eta\left\langle \nabla\eta, \nabla h\right\rangle+\int \eta^2\left( N-b-a\log D\right)\\
&\leq\int2\left| \nabla\eta\right|^2+\frac{1}{2}\eta^2\left| \nabla h\right|^2+\int \eta^2\left( N-b-a\log D\right)\\
&\leq\int2\left| \nabla\eta\right|^2+\int\frac{1}{2}\eta^2\left( \left| \nabla h\right|^2+ah+N\right) +\int \eta^2\left( N-b-a\log D\right)\\
&=\int2\left| \nabla\eta\right|^2+\int\frac{1}{2}\eta^2v +\int \eta^2\left( N-b-a\log D\right),
\end{split}
\end{equation*}
hence $$\left\| v\right\|^*_{B_{\frac{4}{5}}}\leq\frac{\int4\left| \nabla\eta\right|^2+\int 2\eta^2\left( N-b-a\log D\right)}{B_{\frac{4}{5}}}\leq\left( 400+4N\right)\frac{B_1}{B_{\frac{4}{5}}}.$$
Now, all the terms which contain $h$ vanish, hence we can iterate as in the proof of Theorem \ref{th1.2} without assuming $D_5<+\infty$ to get $$\sup\limits_{B\left( x,\frac{1}{2}\right) }\left|\nabla h \right|^2\leq C.$$
Moreover, if $a\log D+b\geq0$ on $B_r$, then we can see that picking $N=a\log D+b+1$ is a proper choice. As a consequence, the term $N-a\log D-b$ vanishes in the above procedures, so the constant $C$ does not depend on $D_2$, $D_4$ or $D$.
\end{proof}

\section{\textbf{Proof of Theorem \ref{th1.2'}}}
Our main method adopted here is still the powerful Moser's iteration, but the procedures are a little different from those in the proof of Theorem \ref{th1.2}.
\begin{proof}
\noindent(1). For any $q\geq1$, we have$$\Delta u^q=qu^{q-1}\Delta u+q(q-1)u^{q-2}\left| \nabla u\right|^2\geq qu^{q-1}\Delta u,$$
then from equation (\ref{equ}), it's obvious that
\begin{equation}\label{4.1}
-\Delta u^q\leq aqu^q\log u+q(b)_+u^q.
\end{equation}
For a cut-off function $\eta\in C^\infty_0(B_1)$, we set $w=u^q$, multiply by $\eta^2w$ on both sides of (\ref{4.1}) and integrate on $B_1$ to get
\begin{equation*}
\int\left\langle \nabla(\eta^2w), \nabla w\right\rangle\leq aq\int(\eta w)^2\log u+q\int(b)_+(\eta w)^2,
\end{equation*}
and this leads to
\begin{equation}\label{4.2}
\int\left| \nabla\left( \eta w\right) \right|^2\leq\int\left| \nabla\eta\right|^2w^2+  \frac{aq}{2}\int(\eta w)^2\log u^2+q\int(b)_+(\eta w)^2.
\end{equation}
Without loss of generality, analogous to the proof of Corollary \ref{c1.2'} we can assume $u>1$ since otherwise it's proper to consider $u\chi_{B'}$ instead of $u$, where $B'=\left\lbrace x\in B_1:u(x)>1 \right\rbrace$.

Now it's easy check that for any positive number $\tilde{e}>0$, the quantity $\log u^2-u^{2\tilde{e}}$ has a upper bound $C=C(\tilde{e})$. Hence the H\"{o}lder's inequality tells us that for fixed $p>\frac{n}{2}$ there holds
\begin{align}\label{4.3}
\frac{aq}{2}\int(\eta w)^2\log u^2&\leq\frac{aq}{2}\int(\eta w)^2u^{2\tilde{e}}+\frac{C(\tilde{e})aq}{2}\int(\eta w)^2\nonumber\\
&\leq \frac{aq}{2}\left( \int u^{2\tilde{e}p}\right)^{\frac{1}{p}} \left( \int(\eta w)^{\frac{2p}{p-1}}\right)^{\frac{p-1}{p}}+\frac{C(\tilde{e})aq}{2}\int(\eta w)^2.
\end{align}
Without loss of generality, we may assume $\left\| u\right\|_{2,B_1}=1$. Let $\tilde{e}p=1$, then we have
\begin{equation}\label{4.4}
\frac{aq}{2}\int(\eta w)^2\log u^2\leq \frac{qa}{2}\left(\int (\eta w)^{\frac{2p}{p-1}}\right)^{\frac{p-1}{p}}+\frac{C(\tilde{e})aq}{2}\int(\eta w)^2.
\end{equation}
By interpolation inequality similar to (\ref{3.21*}), for any $B>0$, there exists a constant $C=C(n, p, B)$ such that for $n\geq3$
\begin{equation}\label{4.5}
\left(\fint (\eta w)^{\frac{2p}{p-1}}\right)^{\frac{p-1}{p}}\leq B\left(\fint (\eta w)^{\frac{2n}{n-2}}\right)^{\frac{n-2}{n}}+C(n, p,B)\fint(\eta w)^2,
\end{equation}
therefore
\begin{equation}\label{4.6}
\frac{aq}{2}\int(\eta w)^2\log u^2\leq \frac{qaB}{2}\left(\int (\eta w)^{\frac{2n}{n-2}}\right)^{\frac{n-2}{n}}+\frac{qaC(n, p,B)}{2}\int(\eta w)^2+\frac{C(\tilde{e})aq}{2}\int(\eta w)^2.
\end{equation}
Applying (\ref{1.4}) to $f=\eta w$ for $r=1$, then from (\ref{4.2}) and (\ref{4.6}) we have
\begin{equation}\label{4.7}
\begin{split}
\frac{1}{C_S^2}\left( \fint(\eta w)^\frac{2n}{n-2}\right)^{\frac{n-2}{n}}\leq&\frac{qaB}{2}\left(\fint (\eta w)^{\frac{2n}{n-2}}\right)^{\frac{n-2}{n}}+\frac{qaC(n, p,B)}{2}\fint(\eta w)^2+\frac{C(\tilde{e})aq}{2}\fint(\eta w)^2\\
&+\fint\left| \nabla\eta\right|^2w^2+q\fint(b)_+(\eta w)^2.
\end{split}
\end{equation}
Let $qaB=\frac{1}{C_S^2}$, then (\ref{4.7}) changes into
\begin{equation}\label{4.8}
\begin{split}
\left( \fint(\eta w)^\frac{2n}{n-2}\right)^{\frac{n-2}{n}}\leq&C_S^2\left( qaC(n, p,B)\fint(\eta w)^2+C(\tilde{e})aq\fint(\eta w)^2\right) \\
&+2C_S^2\left( \fint\left| \nabla\eta\right|^2w^2+q\fint(b)_+(\eta w)^2\right).
\end{split}
\end{equation}

For $n=2$, during the proof of Theorem \ref{th1.2}, we have pointed out that it can be viewed as the same as the case of $n=3$ by letting $p>\frac{3}{2}$, i.e. if we define
\begin{equation*}
n'=
\left\{
\begin{array}{lr}
n ,&n\geq3,\\
3 ,&n=2,
\end{array}\quad and \quad
\right.	
\mu=
\left\{
\begin{array}{lr}
\frac{n}{n-2} ,&n\geq3,\\
3 ,&n=2,
\end{array}
\right.
\end{equation*}
then from (\ref{4.8}), for $p>\frac{n'}{2}$ we have
\begin{equation}\label{4.9}
 \begin{split}
 \left( \fint(\eta w)^{2\mu}\right)^\mu\leq&C_S^2\left( qaC(n, p,B)\fint(\eta w)^2+C(\tilde{e})aq\fint(\eta w)^2\right) \\
 &+2C_S^2\left( \fint\left| \nabla\eta\right|^2w^2+q\fint(b)_+(\eta w)^2\right).
 \end{split}
\end{equation}
By a similar argument with (\ref{2.20}), there exists a constant $C_1=C_1\left( n',p,C_S^2,q^{\frac{2p}{2p-n'}},\left| \left|(b)_+ \right| \right|^*_{p, B_1}\right)$ such that
\begin{equation}\label{4.10}
2C_S^2q\fint(b)_+(\eta w)^2\leq\frac{1}{2}\left( \fint(\eta w)^{2\mu}\right)^\mu+C_1\fint(\eta w)^2.
\end{equation}
Plugging (\ref{4.10}) into (\ref{4.9}), there holds
\begin{equation}\label{4.11}
 \left( \fint(\eta w)^{2\mu}\right)^\mu\leq2C_S^2\left( qaC(n, p,B)\fint(\eta w)^2+\left( C(\tilde{e})aq+C_1\right) \fint(\eta w)^2\right)+4C_S^2 \fint\left| \nabla\eta\right|^2w^2.
\end{equation}
Then, for some constant $C_2=C_2\left( n,p,a,C_S,\left| \left|(b)_+\right| \right|^*_{p, B_1}\right) >0$ we can obtain
$$\left| \left| u\right| \right|_{\infty,B_{\frac{1}{2}}}\leq C_2\left| \left| u\right| \right|^*_{1,B_\frac{4}{5}}$$
by iterating just like the procedures after (\ref{2.26}). Hence we obtain (\ref{1.8**}) by the fact that \begin{equation}\label{4.12**}
\left\| u\right\|^*_{1,B_{\frac{4}{5}}}\leq\left\| u\right\|^*_{2,B_{\frac{4}{5}}}=\left( \frac{\int_{B_\frac{4}{5}}u^2}{B_{\frac{4}{5}}}\right)^\frac{1}{2}\leq\frac{\left\| u\right\|_{2,B_1}}{V^{\frac{1}{2}}}.
\end{equation}
Thus, we finish the proof of the first claim of Theorem \ref{th1.2}.
\medskip

\noindent(2). Since by (\ref{1.8**}) we have $u\leq D$ on  $B\left( y,\frac{1}{2}\right) \subset B_1$ for any $y\in B_1$, so we have $u\leq D$ on the whole $B_1$. Let $R_c=\left| Ric^-\right|$ and  $v=\left| \nabla u\right|^2$. As before, we may assume $v>1$. Since $a>0$, by Bochner formula we have
\begin{equation*}
\begin{split}
	\frac{1}{2}\Delta\left| \nabla u\right|^2&=\left| D^2u\right|^2+\left\langle \nabla u, \nabla\Delta u\right\rangle+Ric\left( \nabla u, \nabla u\right)\\
	&\geq\left\langle \nabla u, \nabla\Delta u\right\rangle+Ric\left( \nabla u, \nabla u\right)\\
	&\geq-\left\langle \nabla u, \nabla(au\log u+bu)\right\rangle-R_cv\\
	&=-av\log u-av-bv-u\left\langle \nabla u, \nabla b\right\rangle-R_cv\\
	&\geq -av\log D-av-bv-\frac{u}{2}\left( v+\left| \nabla b\right|^2\right)-R_cv\\
	&\geq -av\log D-av-bv-\frac{D}{2}\left( v+\left| \nabla b\right|^2\right)-R_cv,
\end{split}
\end{equation*}
and since $v>1$, there holds
\begin{equation}\label{4.12}
\frac{1}{2}\Delta v\geq-a\left( 1+log D+\frac{D}{2}\right) v-\frac{D\left| \nabla b\right|^2v}{2}-(b)_+v-R_cv.
\end{equation}
Next, for any $l\geq0$ and $\eta\in C^\infty_0(B_1)$, we multiply by $\eta^2v^l$ on both sides of (\ref{4.12}) to get
\begin{equation}\label{4.13}
\int\frac{1}{2}\eta^2v^l\Delta v\geq\int\left( -a(1+log D+\frac{D}{2})\eta^2v^{l+1}-\frac{D\left| \nabla b\right|^2\eta^2v^{l+1}}{2}-(b)_+v^{l+1}-R_c\eta^2v^{l+1}\right) .
\end{equation}
Now, (\ref{4.13}) is similar to (\ref{2.13}) but simpler, the only difference is that we need to handle the following two terms
$$\int\left| \nabla b\right|^2\eta^2v^{l+1}\quad \mbox{and} \quad \int (b)_+\eta^2v^{l+1}$$
as in (\ref{2.20}). Then, following the same iteration as in the proof of Theorem \ref{th1.2}, we obtain that there exists a constant $C=C\left( n,p,a,D,\kappa(p,1),C_S,\left| \left|(b)_+\right| \right|^*_{p, B_1},\left| \left|\left| \nabla b\right|^2  \right| \right|^*_{p, B_{1}}\right)$ such that
\begin{equation}\label{4.14}
\sup\limits_{B_\frac{1}{2}}v\leq C\left\| v\right\|^*_{1,B_{\frac{4}{5}}}.
\end{equation}
Then by using the same inequality to $v$ as in (\ref{4.12**}), we obtain the required (\ref{1.9**}) since $u\in W^{1,2}(B_1)$.
\end{proof}

\section{\textbf{Proof of Theorem \ref{th1.3'}}}\label{sec5}
In order to overcome the difficulty that $\log u$ maybe unbounded, we consider $u^{\frac{1}{q}}$ with $q>1$ instead of $\log u$.
\begin{proof}
Set $w=u^{\frac{1}{q}}$, from (\ref{equ}) we know $w$ satisfies
\begin{equation}\label{5.1}
\Delta w+(q-1)\frac{\left| \nabla w\right|^2}{w}+aw\log w+\frac{bw}{q}=0.
\end{equation}
Let $v=(q-1)\frac{\left| \nabla w\right|^2}{w}$ and $b'=\frac{b}{q}$. As before, we can also assume $v\geq1$ since otherwise it's suitable for us to consider $v+1$ instead of $v$.
By Bochner formula we have
\begin{align}
\Delta v=&(q-1)\Delta\left( \frac{\left| \nabla w\right|^2}{w}\right)\nonumber\\
=&(q-1)\left[ \frac{\Delta\left| \nabla w\right|^2}{w}+\left| \nabla w\right|^2\Delta\left( \frac{1}{w}\right)+2\left\langle \nabla(\left| \nabla w\right|^2), \nabla\left(\frac{1}{w} \right) \right\rangle  \right] \nonumber\\
\geq&(q-1)\left[ \frac{2\left| D^2 w\right|^2+2\left\langle\nabla w, \nabla\Delta w \right\rangle - 2Ric^{-}\left| \nabla w\right|^2 }{w}+\left| \nabla w\right|^2\left( -\frac{\Delta w}{w^2}+\frac{2\left| \nabla w\right|^2}{w^3}\right)\right]\label{5.2}\\
& +2(q-1)\left\langle \nabla\left| \nabla w\right|^2, -\frac{\nabla w}{w^2} \right\rangle.\nonumber
\end{align}
Substituting $\Delta w=-v-aw\log w-b'w$ and $\left| \nabla w\right|^2=\frac{vw}{q-1}$ into (\ref{5.2}) gives
\begin{equation}\label{5.3}
\begin{split}
\Delta v=&(q-1)\left[ \frac{2\left| D^2 w\right|^2}{w}+\frac{v^2}{(q-1)w}-\left( \frac{2}{w}+\frac{2}{(q-1)w}\right)\left\langle\nabla w,\nabla v \right\rangle -2\left\langle \nabla w, \nabla b'\right\rangle \right]\\
&+\left(-2Ric^{-}-a\log w-b'-2a\right) v.
\end{split}
\end{equation}
Since $-2\left\langle \nabla w, \nabla b'\right\rangle\geq-\frac{\left| \nabla w\right|^2}{w}-w\left| \nabla b'\right|^2$, (\ref{5.3}) leads to
\begin{equation}\label{5.4}
\begin{split}
\Delta v\geq&(q-1)\left[ \frac{v^2}{(q-1)w}-\left( \frac{2}{w}+\frac{2}{(q-1)w}\right)\left\langle\nabla w,\nabla v \right\rangle\right]\\
&+\left(-2Ric^{-}-a\log D^{\frac{1}{q}}-b'-2a-1\right)v-(q-1)D^{\frac{1}{q}}\left| \nabla b'\right|^2.
\end{split}
\end{equation}
Next, for any $l\geq0$ and $\eta\in C^\infty_0\left( B_\lambda\right) $, since $v\geq1$, we multiply by $ \frac{1}{2}\eta^2v^l $ on both sides of (\ref{5.4}) and integrate on $B(x, \lambda)$ to get
\begin{equation}\label{5.5}
\begin{split}
\frac{1}{2}\int\eta^2v^l\Delta v\geq&\int\left( \frac{\eta^2v^{l+2}}{2w}-q\eta^2v^l\left\langle\nabla \log w, \nabla v \right\rangle-R_c\eta^2v^{l+1}-C_1\eta^2v^{l+1}\right)\\
&-\int\left( C_2\left| \nabla b'\right|^2\eta^2v^{l+1}
+\frac{b'}{2}\eta^2v^{l+1}\right)
\end{split}
\end{equation}
where $$C_1=\frac{a\left( \log D^{\frac{1}{q}}\right)_++2a+1}{2},\quad\quad C_2=\frac{(q-1)D^{\frac{1}{q}}}{2}$$ and $R_c=\left|Ric^- \right|$. From now on, if there has no special emphasis on integral domain, we always calculate on $B_\lambda$.

By Green formula and Cauchy-Schwartz inequality, there holds
\begin{align}
\int\eta^2v^l\left\langle \nabla v, \nabla \log w\right\rangle&=-\frac{1}{l+1}\int\eta^2v^{l+1}\Delta \log w-\frac{2}{l+1}\int v^{l+1}\left\langle \nabla\eta, \eta\nabla \log w\right\rangle\nonumber\\
&\leq -\frac{1}{l+1}\int\eta^2v^{l+1}\left( \frac{\Delta w}{w}-\frac{\left| \nabla w\right|^2}{w^2}\right)+\frac{1}{l+1}\int v^{l+1}\left(\left|\nabla\eta \right|^2 +\eta^2\frac{\left| \nabla w\right|^2}{w^2}\right)\label{5.6}.
\end{align}
Substituting $\Delta w=-v-aw\log w-b'w$ and $\left| \nabla w\right|^2=\frac{vw}{q-1}$ into (\ref{5.6}) gives
\begin{equation}\label{5.7}
\begin{split}
&\int\eta^2v^l\left\langle \nabla v, \nabla \log w\right\rangle\\
&\leq\left( \frac{1}{l+1}+\frac{2}{(l+1)(q-1)}\right)\int\frac{\eta^2v^{l+2}}{w}+\frac{1}{l+1}\left( \int C_3\eta^2v^{l+1}+\int\left| \nabla\eta \right|^2v^{l+1}\right)
\end{split}
\end{equation}
where $$C_3=\left( a\log D^{\frac{1}{q}}\right)_++(b')_+.$$
Combining (\ref{5.5}) with (\ref{5.7}), we have
\begin{equation}\label{5.8}
\begin{split}
&\int\frac{1}{2}\eta^2v^l\Delta v\\
&\geq\int\left[ \frac{1}{2}-\left(\frac{q}{l+1}+\frac{2q}{(l+1)(q-1)} \right)\right]\frac{\eta^2v^{l+2}}{w} -\frac{q}{l+1}\left( \int C_3\eta^2v^{l+1}+\int\left| \nabla\eta \right|^2v^{l+1}\right) \\
&\quad-\int R_c\eta^2v^{l+1}-\int C_1\eta^2v^{l+1}-\int\left( C_2\left| \nabla b'\right|^2\eta^2v^{l+1}
+\frac{b'}{2}\eta^2v^{l+1}\right).
\end{split}
\end{equation}
Let $l+1\geq2q+\frac{4q}{q-1}$, then $\frac{1}{2}-\left(\frac{q}{l+1}+\frac{2q}{(l+1)(q-1)}\right)\geq0$, so (\ref{5.8}) becomes
\begin{equation}\label{5.9}
\begin{split}
\int\frac{1}{2}\eta^2v^l\Delta v
\geq&-\frac{q}{l+1}\left( \int C_3\eta^2v^{l+1}+\int\left| \nabla\eta \right|^2v^{l+1}\right)-\int R_c\eta^2v^{l+1}-\int C_1\eta^2v^{l+1}\\
&-\int\left( C_2\left| \nabla b'\right|^2\eta^2v^{l+1}
+\frac{b'}{2}\eta^2v^{l+1}\right).
\end{split}
\end{equation}
By (\ref{2.14}) and (\ref{5.9}), there holds
\begin{equation}\label{5.10}
\begin{split}
&\int\left| \nabla\left( \eta v^{\frac{l+1}{2}}\right) \right|^2\\
&\leq\frac{(l+1)^2}{l}\left[\frac{q}{l+1}\left( \int C_3\eta^2v^{l+1}+\int\left| \nabla\eta \right|^2v^{l+1}\right)+\int R_c\eta^2v^{l+1}+\int C_1\eta^2v^{l+1} \right]\\
&\quad+\frac{(l+1)^2}{l}\int\left( C_2\left| \nabla b'\right|^2\eta^2v^{l+1}
+\frac{b'}{2}\eta^2v^{l+1}\right)\\
&\quad+\frac{\left( l+1\right)^2+l }{l^2} \int v^{l+1}\left| \nabla\eta\right|^2-\frac{l+1}{l}\int \eta v^{l+1}\Delta \eta.
\end{split}
\end{equation}
For $0<r<1$, let $\phi\in C^\infty_0\left( [ 0, +\infty)\right) $ be a cut-off function such that
$$0\leq\phi\leq1, \quad\hspace*{0.3em}\phi(t)\equiv1\hspace*{0.3em} \mbox{for}\hspace*{0.3em}0\leq t\leq r\lambda,\quad\hspace*{0.3em}\phi(t)\equiv0\hspace*{0.3em}\mbox{for}\hspace*{0.3em}t\geq\lambda\hspace*{0.3em}\mbox{and}\hspace*{0.3em}\phi'\leq0.$$
Let $\eta(y)=\phi(d(x,y))$, then $\left| \nabla\eta\right|=\left| \phi'\right|$ and for $\psi(y)=\left(\Delta d-\frac{n-1}{d} \right)_+ $ there holds
\begin{equation}\label{5.11}
\begin{split}
\Delta\eta&=\phi''+\phi'\Delta d=\phi''+\phi'\left( \Delta d-\frac{n-1}{d}+\frac{n-1}{d}\right)\\
&\geq\phi''+\phi'\left( \psi+\frac{n-1}{d}\right)\geq-\left|\phi'' \right|-\frac{(n-1)\left|\phi' \right| }{r\lambda}-\left| \phi'\right|\psi.
\end{split}
\end{equation}
 As a consequence, we deduce from (\ref{5.10}) and (\ref{5.11}) that there exists a constant $C_4=C_4(q,a,D)$ such that
 \begin{equation}\label{5.12}
 \begin{split}
&\int\left| \nabla\left( \eta v^{\frac{l+1}{2}}\right) \right|^2\\
&\leq C_4l\int\left[ \left( \left| \phi''\right| +\frac{\left|\phi' \right| }{r\lambda}+\left|  \phi'\right|\psi \right) \eta v^{l+1}+\left| \phi'\right|^2v^{l+1}+R_c\eta^2v^{l+1}+\eta^2v^{l+1}\right]\\
&\quad+C_4l\int\left( \left| \nabla b'\right|^2\eta^2v^{l+1}
+b'\eta^2v^{l+1}\right).
 \end{split}
 \end{equation}
Using (\ref{1.3}) and (\ref{5.12}), we obtain
\begin{equation}\label{5.13}
\begin{split}
&\left( \fint\left( \eta^2v^{l+1}\right)^\mu \right)^{\frac{1}{\mu}}\\
&\leq C^2(n)\lambda^2C_4l\fint\left[ \left( \left| \phi''\right| +\frac{\left|\phi' \right| }{r\lambda}+\left|  \phi'\right|\psi \right) \eta v^{l+1}+\left| \phi'\right|^2v^{l+1}+R_c\eta^2v^{l+1}+\eta^2v^{l+1}\right]\\
&\quad+C^2(n)\lambda^2C_4l\fint\left( \left| \nabla b'\right|^2\eta^2v^{l+1}
+b'\eta^2v^{l+1}\right)
\end{split}
\end{equation}
where $C_S(n,\lambda)=C(n)\lambda$.
Following the same program as we got (\ref{2.21}) and (\ref{2.25}) in the previous, we can see that there holds
\begin{equation}\label{5.14}
\begin{split}
&\left(\fint\left( \eta^2v^{l+1}\right)^\mu  \right)^{\frac{1}{\mu}}\\
&\leq6C^2(n)\lambda^2C_4l\left[ \fint\left( \left| \phi''\right| +\frac{\left|\phi' \right| }{r\lambda} \right) \eta v^{l+1}+\left( 1+\frac{3}{2}C^2(n)\lambda^2C_4lC(n, p)\left| \left| R_c\right| \right|^*_p\right)\fint \left| \phi'\right|^2v^{l+1}\right]\\
&\hspace*{1em}+6C_5\left( C^2(n)\lambda^2l\right)^{\frac{2p}{2p-n'}}\left( 1+\left(\left| \left| R_c\right| \right|^*_p  \right)^\frac{2p}{2p-n'}+\left(\left| \left| b'\right| \right|^*_p  \right)^\frac{2p}{2p-n'}+\left(\left| \left| \left| \nabla b'\right|^2\right|  \right|^*_p  \right)^\frac{2p}{2p-n'}\right)\fint\eta^2v^{l+1}
\end{split}
\end{equation}
for some constant $C_5=C_5(C_4,n,p)$. Here we still use the symbols
\begin{equation*}
n'=
\left\{
\begin{array}{lr}
n ,&n\geq3\\
3 ,&n=2
\end{array}\quad and \quad
\right.	
\mu=
\left\{
\begin{array}{lr}
\frac{n}{n-2} ,&n\geq3\\
3 ,&n=2
\end{array}
\right.
\end{equation*}
and estimate the quantities $\fint\left| \nabla b'\right|^2\eta^2v^{l+1}$ and $\fint b'\eta^2v^{l+1}$ by the same route as we dealt with the term $\fint R_c\eta^2v^{l+1}$.

For integers $j\geq0$, we define $$r_j=\lambda\left( \theta+\delta-\delta\left( \sum^j\limits_{i=0}2^{-i-1}\right) \right)$$
where $\theta,\delta$ are two constants such that $0<\delta\leq\frac{1}{2}$ and $\frac{1}{2}\leq\theta\leq\frac{4}{5}-\delta$, then $\theta\lambda<r_j<(\theta+\delta)\lambda$. Moreover, we can choose $\eta_j(y)=\phi_j(d(y))\in C^\infty_0(B_{r_{j-1}})$ such that
$$\phi_j\equiv1\quad\mbox{on}\hspace*{0.3em}B_{r_j},\quad \hspace*{0.3em}\phi_j\equiv0\quad\mbox{on}\hspace*{0.3em}B_1\setminus B_{r_{j-1}},\quad B_{r_{-1}}=B_{(\theta+\delta)\lambda}$$
and
$$\left|\phi_j'\right|\leq\frac{2^{j+3}}{\delta\lambda},\quad\quad \left| \phi_j''\right|\leq\frac{2^{2j+6}}{\delta^2\lambda^2}.$$
In this situation, we have $$\frac{\left|\phi_j'\right|}{r_j\lambda}\leq\frac{2^{2j+6}}{\delta^2\lambda^2},\quad\quad \left|\phi_j'\right|^2\leq\frac{2^{2j+6}}{\delta^2\lambda^2},\quad\quad\eta_{j+1}\leq\eta^\mu_j\leq\eta_j$$ and $\eta^\mu_j\equiv1$ on the  support of $\eta_{j+1}$.

Next, it is easy that there always exists a minimal integer $m$ such that $$\mu^m\geq2q+\frac{4q}{q-1}$$
and let $q'=\max{\left\lbrace 2,\frac{2p}{2p-n'}\right\rbrace }$. For simplicity, we denote \begin{equation*}
\begin{split}
B\equiv&\delta^{-2}\left[ 6C_5\left( C^2(n)\right)^{\frac{2p}{2p-n'}}\left( 1+\left(2^{\frac{1}{p}}\kappa(p,1)  \right)^\frac{2p}{2p-n'}+\left(\left| \left| b'\right| \right|^*_p  \right)^\frac{2p}{2p-n'}+\left(\left| \left| \left| \nabla b'\right|^2\right|  \right|^*_p  \right)^\frac{2p}{2p-n'}\right)\right] \\
&+\delta^{-2}\left[ 6C^2(n)C_4\left(3+\frac{3}{2}C^2(n)C_4C(n, p)2^{\frac{1}{p}}\kappa(p,1)\right)\right]
\end{split}
\end{equation*}
and
\begin{equation*}
	\begin{split}
		A\equiv&6C_5\left( C^2(n)\right)^{\frac{2p}{2p-n'}}\left( 1+\left(2^{\frac{1}{p}}\kappa(p,1)  \right)^\frac{2p}{2p-n'}+\left(\left| \left| b'\right| \right|^*_p  \right)^\frac{2p}{2p-n'}+\left(\left| \left| \left| \nabla b'\right|^2\right|  \right|^*_p  \right)^\frac{2p}{2p-n'}\right)\\
		&+ 6C^2(n)C_4\left(3+\frac{3}{2}C^2(n)C_4C(n, p)2^{\frac{1}{p}}\kappa(p,1)\right).
	\end{split}
\end{equation*}
Then, according to (\ref{5.14}), Remark \ref{r1.2} and noting $\lambda^\alpha\leq\lambda^2\leq1$ for $\alpha\geq2$, there holds true that
\begin{equation}\label{5.15}
\begin{split}
\left(\fint\left( \eta^2_{j+1}v^{l+1}\right)^\mu  \right)^{\frac{1}{\mu}}\leq Bl^{q'}2^{2j+6}\fint \eta^{2\mu}_jv^{l+1}\leq B(l+1)^{q'}2^{2j+6}\fint \eta^{2\mu}_jv^{l+1}.
\end{split}
\end{equation}
Then following the same iteration steps as to get (\ref{2.32}), letting $t=\mu^m$, $s=1$, $R=\lambda$ and $\tau=\frac{3}{10}$, by Lemma \ref{l2.3} one can see that there exists a constant $C_6=C_6(\mu,n,q,q',A)$ which does not depend on $\mu^m$ such that
\begin{equation}\label{5.16}
\left| \left| v\right| \right|_{\infty,B_{\frac{\lambda}{2}}}\leq C_6\left| \left| v\right| \right|^*_{1,B_\frac{4\lambda}{5}}.
\end{equation}

Next, we choose some $\eta\in C^\infty_0(B_\lambda)$ such that
$$\eta\equiv1\quad\mbox{on}\hspace*{0.3em}B_\frac{4\lambda}{5}\quad\mbox{and}\quad \left| \nabla\eta\right|\leq\frac{10}{\lambda}.$$
For $\eta^2v$, there holds
\begin{equation}\label{5.17}
\begin{split}
\int\eta^2v=-\int\eta^2\left( \Delta w+aw\log w+b'w\right).
\end{split}
\end{equation}
By Green's formula, we have
$$-\int\eta^2\Delta w=2\int\left\langle \nabla\eta, \eta\nabla w\right\rangle\leq\frac{1}{2}\int\eta^2v+2\int\frac{w\left| \nabla\eta\right|^2}{q-1}\leq\frac{1}{2}\int\eta^2v+\frac{200D^{\frac{1}{q}}}{(q-1)\lambda^2}\left| B_\lambda\right|.$$
On the other hand, we also have
$$-\int\eta^2(aw\log w+b'w)\leq\left( \frac{a}{e}+\left| \left|b'\right| \right|^*_{p, B_\lambda}D^{\frac{1}{q}}\right)\left|B_\lambda \right|.$$
Here, we have used the following facts that for $w\in(0, +\infty)$ there hold
$$-w\log w\leq\frac{1}{e}$$ and the monotonicity inequality
$$\left| \left|b'\right| \right|^*_{1, B_\lambda}\leq\left| \left| b'\right| \right|^*_{p, B_\lambda}.$$

Now, from (\ref{5.17}) we can deduce that there holds
\begin{equation}\label{5.18}
\begin{split}
\left| \left| v\right| \right|^*_{1,B_\frac{4\lambda}{5}}\leq\frac{\int\eta^2v}{\left|B_{\frac{4\lambda}{5}}\right|}&\leq\frac{\left( \frac{400D^{\frac{1}{q}}}{(q-1)\lambda^2}+\frac{2a}{e}+2\left| \left|b'\right| \right|^*_{p, B_\lambda}D^{\frac{1}{q}}\right)\left|B_\lambda \right|}{\left|B_{\frac{4\lambda}{5}} \right|}\\
&\leq\frac{\left( \frac{400D^{\frac{1}{q}}}{(q-1)}+\frac{2a}{e}+2\left| \left|b'\right| \right|^*_{p, B_\lambda}D^{\frac{1}{q}}\right)\left|B_\lambda \right|}{\lambda^2\left|B_{\frac{4\lambda}{5}} \right|}.
\end{split}
\end{equation}
By (\ref{2.2}), we have
\begin{equation}\label{5.19}
\left|B_\lambda \right|\leq\frac{2\cdot5^n}{4^n}\left|B_{\frac{4\lambda}{5}}\right|.
\end{equation}
Substituting (\ref{5.18}) into (\ref{5.16}) and noting $$\frac{\left|\nabla w \right|^2}{w}=\frac{\left|\nabla u \right|^2u^{\frac{1}{q}-2}}{q^2},$$ we get the required estimate
\begin{equation}\label{5.20}
\sup\limits_{B\left( x,\frac{\lambda}{2}\right) }\frac{\left|\nabla u\right|}{u^{1-\frac{1}{2q}}} \leq \frac{C}{\lambda}.
\end{equation}
Thus, we accomplish the proof of Theorem \ref{th1.3'} provided $k$ is small enough such that Theorem \ref{th1.1} and Lemma \ref{l2.2} are valid.
\end{proof}

\section{\textbf{Global Estimates}}
It's well-known that for complete non-compact Riemannian manifold $(M, g)$ with Ricci curvature $Ric_M\geq0$ (i.e. $R_c\equiv0$), by Theorem 14.3 in \cite{Li}, the following Sobolev inequality holds true for any $B(x,R)\in M$:
\begin{equation}\label{6.1}
	\left| \left| f\right| \right|^2_{\frac{2n}{n-2},B_R}\leq C^2_SR^2\left| B_R\right|^{-\frac{2}{n}} \left| \left| \nabla f\right| \right|^2_{2,B_R}
\end{equation}
for any $f\in C^{\infty}_0(B_R)$, and consequently, for $f\in W_0^{1,2}(B_R)$. Especially, the Sobolev constant $C_S=C_S(n)$ does not depend on the radius $R$. However, in order to achieve the expected goal, we need to transform the above Sobolev inequality to the type similar to (\ref{1.4}).

As usual, we define $\mu=\frac{n}{n-2}$. Dividing by $\left| B_R\right| ^{\frac{1}{\mu}}$ on both sides of (\ref{6.1}), we get
\begin{equation}\label{6.2}
	\left| \left| f\right| \right|^*_{\frac{2n}{n-2},B_R}\leq C_SR \left| \left| \nabla f\right| \right|^*_{2,B_R}.
\end{equation}
Now, since (\ref{6.2}) is almost the same as (\ref{1.4}), so the local estimate in Theorem \ref{th1.3'} can be extended to the whole $M$ easily. In the following, we use the same symbols as in section \ref{sec5} if there is no special emphasis.
\begin{proof}[Proof of Theorem \ref{th1.4'}]
Letting $w=u^{\frac{1}{q}}$ for $q>1$ and calculating on $B_R$ as in section \ref{sec5}. Since $\left| \nabla b\right|\equiv0$, we need not to assume $v\geq1$ to obtain (\ref{5.5}) from (\ref{5.4}), hence there holds
\begin{equation}\label{6.8}
\begin{split}
&\int\left| \nabla\left( \eta v^{\frac{l+1}{2}}\right) \right|^2\\
&\leq\frac{(l+1)^2}{l}\left[\frac{q}{l+1}\left( \int C_3\eta^2v^{l+1}+\int\left| \nabla\eta \right|^2v^{l+1}\right)+\int C_1\eta^2v^{l+1}+\int\frac{b'}{2}\eta^2v^{l+1} \right]\\
&\quad+\frac{\left( l+1\right)^2+l }{l^2} \int v^{l+1}\left| \nabla\eta\right|^2-\frac{l+1}{l}\int \eta v^{l+1}\Delta \eta
\end{split}
\end{equation}
if $l+1\geq2q+\frac{4q}{q-1}$ at least. Here $C_1=\frac{a\log D^{\frac{1}{q}}+2a}{2}$, $C_2=0$ and $C_3=a\log D^{\frac{1}{q}}+b'$ since $b$ is constant, and to make our estimate more precise, we do not take their positive parts.
	
	Next, we need to use the cut-off function introduced by Li-Yau in \cite{LY}. Concretely, let $\phi(r)$ be a nonnegative $C^2$-smooth function on $R^+=\left[0,+\infty \right)$ such that $\phi(r)=1$ for $r\leq \frac{1}{2}$ and $\phi(r)=0$ for $ r\geq1$. Moreover, there exist two positive constants $C_4$ and $C_5$ such that the derivatives of $\phi(r)$ satisfy the conditions as follows:
	\begin{equation}\label{6.9}
		-C_4\phi^{\frac{1}{2}}(r)\leq\phi^{'}(r)\leq 0\quad\quad \mbox{and}\quad\quad -C_5\leq\phi^{''}(r).
	\end{equation}

Now, let $\eta(y)=\phi\left(\frac{d(y,x)}{R}\right)$ where $d(y,x)$ denotes the distance from $y$ to $x$ and it is obvious that $\eta(y)$ is supported in $B_R$:
	\begin{align*}
		&\eta|_{B_\frac{R}{2}}=1,\\
		&\eta|_{M\backslash B_R}=0.
	\end{align*}
	Consequently, it follows from (\ref{6.9}) and the Laplacian comparison theorem that
	\begin{align}
		&\frac{\left| \nabla  \eta\right|^2 }{\eta}\leq\dfrac{C_4^2}{R^2},\label{6.10}\\
		&\Delta\eta\geq-\frac{(n-1)C_4^2+C_5}{R^2}.\label{6.11}
	\end{align}
	
Then, by (\ref{6.8}), (\ref{6.10}) and (\ref{6.11}) we know that there holds
	\begin{align}\label{6.12}
		&\int\left| \nabla\left( \eta v^{\frac{l+1}{2}}\right) \right|^2\nonumber\\
		&\leq\frac{(l+1)q}{l}\int\left| \nabla\eta \right|^2v^{l+1}+\frac{\left( l+1\right)^2+l }{l^2} \int v^{l+1}\left| \nabla\eta\right|^2-\frac{l+1}{l}\int \eta v^{l+1}\Delta \eta \nonumber\\
		&\quad+\frac{(l+1)^2}{l}\left[\left( \frac{qC_3}{l+1}+C_1+\frac{b'}{2}\right) \int \eta^2v^{l+1}\right]\nonumber\\
		&\leq\left( \frac{(l+1)q}{l}+\frac{\left( l+1\right)^2+l }{l^2}\right)\int \frac{C_4^2\eta v^{l+1}}{R^2}+\frac{l+1}{l}\int \eta v^{l+1}\left( \frac{(n-1)C_4^2+C_5}{R^2}\right)\nonumber\\
		&\quad+\frac{(l+1)^2}{l}\left[\left( \frac{qC_3}{l+1}+C_1+\frac{b'}{2}\right) \int \eta^2v^{l+1}\right].	
	\end{align}
As a consequence of Sobolev inequality (\ref{6.2}), letting $f=\eta v^{\frac{l+1}{2}}$ in (\ref{6.12}) reveals that
\begin{align}\label{6.13}
&\left(\fint\left( \eta^2v^{l+1}\right)^\mu  \right)^{\frac{1}{\mu}}\nonumber\\
	&\leq C_S^2R^2\left[ \left( \frac{(l+1)q}{l}+\frac{\left( l+1\right)^2+l }{l^2}\right)\int \frac{C_4^2\eta v^{l+1}}{R^2}+\frac{l+1}{l}\int \eta v^{l+1}\frac{(n-1)C_4^2+C_5}{R^2}\right] \nonumber\\
	&\quad+\frac{(l+1)^2C_S^2R^2}{l}\left[\left( \frac{qC_3}{l+1}+C_1+\frac{b'}{2}\right) \int \eta^2v^{l+1}\right]\nonumber\\
	&\leq C_S^2R^2\left[ \left( \frac{(l+1)q}{l}+\frac{\left( l+1\right)^2+l }{l^2}\right)\int \frac{C_4^2\eta v^{l+1}}{R^2}+\frac{l+1}{l}\int \eta v^{l+1}\frac{(n-1)C_4^2+C_5}{R^2}\right]\nonumber\\
	&\quad+\frac{(l+1)^2C_S^2R^2}{l}\left(  \frac{qC_3}{l+1} \int \eta^2v^{l+1}\right) 	
\end{align}
since $a\log D+2aq+b\leq0$ ensures $C_1+\frac{b'}{2}\leq0$.
	
Now, it's easy to see from (\ref{6.13}) that there exists a constant $C_6$ such that
\begin{equation}\label{6.14}
\left(\fint\left( \eta^2v^{l+1}\right)^\mu  \right)^{\frac{1}{\mu}}\leq C_6C_S^2(l+1)\left[\eta v^{l+1}+\frac{R^2\eta^2v^{l+1}}{l+1}\right],
\end{equation}	
where $C_6=C_6(n,q,a,b,D,C_4,C_5,C_S)$. Let $(l+1)\geq max\left\lbrace 2q+\frac{4q}{q-1}, R^2\right\rbrace$, then (\ref{6.14}) gives
\begin{equation}
\left(\fint\left( \eta^2v^{l+1}\right)^\mu  \right)^{\frac{1}{\mu}}\leq C_6C_S^2(l+1)\left[\eta v^{l+1}+\eta^2v^{l+1}\right].
\end{equation}
By Bishop-Gromov's volume comparison theorem, we also have the volume doubling property $$\left|B_R \right|\leq\frac{5^n}{4^n}\left|B_{\frac{4R}{5}}\right|.$$	
Then just following the same iteration procedures as to obtain (\ref{5.20}), we also get that there exists a constant $C_7=C_7(n,q,a,b,D,C_4,C_5,C_S)$ which does not depend on the radius $R$ such that
\begin{equation}\label{6.16}
\sup\limits_{B\left( x,\frac{R}{2}\right) }\frac{\left|\nabla u\right|}{u^{1-\frac{1}{2q}}}\leq\frac{C_7}{R}.
\end{equation}
	
Letting $R\rightarrow+\infty$, we derive that $\left|\nabla u\right|\equiv0$ on the whole $M$ since $u$ is bounded, in other words, $u$ must be constant.	
\end{proof}

\section{\textbf{Proof of Theorem \ref{th1.3}}}
In this section, we denote $B(x,r)$ by $B_r$, $\lvert Ric^-\rvert$ by $R_c$. To give our proof, we need a lemma from the proof of Theorem 1.1 in \cite{ZZ}.
\begin{lemma}\label{l3.1}
	For positive smooth functions $J(y, t)$ on $M\times[0,\infty)$, there exists a constant $k=k(n,p)$ such that if $\kappa(p, 1)\leq k$, then for any $0<r\leq1$, the equation
	\begin{equation*}
		\left\{
		\begin{array}{lr}
			\Delta J-2R_cJ-5\delta^{-1}\frac{\lvert \nabla J\rvert^2 }{J}-\partial_tJ=0\quad\hspace*{0.3em}\mbox{on}\hspace*{0.3em}B_r\times(0, \infty),\\
			J(\cdot, 0)=1\quad\hspace*{0.3em}\mbox{on}\hspace*{0.3em}B_r,\\
			J(\cdot, t)=1\quad\hspace*{0.3em}\mbox{on}\hspace*{0.3em}\partial B_r.
		\end{array}
		\right.
	\end{equation*}
	has a unique solution for $t\in[0, \infty)$ which satisfies
	\begin{equation}\label{3.1}
		0<\underline{J_r}(t)\leq J(y,t)\leq1
	\end{equation}
	where
	\begin{equation*}
		\underline{J_r}=\underline{J_r}(t)=2^{-\frac{1}{C_1-1}}\exp\left\lbrace -2C_2k\left( 1+\left[ 2C_2(C_1-1)k\right]^{\frac{n}{2p-n}} \right)t \right\rbrace.
	\end{equation*}
	Here $C_1$ and $C_2$ are the same as in Theorem \ref{th1.3}.
\end{lemma}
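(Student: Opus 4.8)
The plan is to \emph{linearize} the equation by a power substitution and then treat existence, the lower bound, and the upper bound separately. Since $C_1=5\delta^{-1}>1$, I would set $w=J^{\,1-C_1}$; then $\Delta w=(1-C_1)J^{-C_1}\Delta J-C_1(1-C_1)J^{-C_1-1}|\nabla J|^2$, and substituting $\Delta J=2R_cJ+C_1|\nabla J|^2/J+\partial_tJ$ from the equation, the two $|\nabla J|^2$ terms cancel — which is precisely why the exponent $1-C_1$ is matched to $C_1$ — leaving the genuinely \emph{linear} parabolic equation
\begin{equation*}
\partial_t w=\Delta w+2(C_1-1)R_c\,w\qquad\text{on }B_r\times(0,\infty),
\end{equation*}
with $w(\cdot,0)\equiv1$ on $B_r$ and $w(\cdot,t)\equiv1$ on $\partial B_r$; conversely, any positive solution of this linear problem yields $J=w^{1/(1-C_1)}$ solving the original one. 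Under this correspondence the asserted bound $0<\underline{J_r}(t)\le J\le1$ is exactly equivalent to
\begin{equation*}
1\le w(y,t)\le W_r(t):=\underline{J_r}(t)^{-(C_1-1)}=2\exp\Big\{2(C_1-1)C_2k\big(1+[2C_2(C_1-1)k]^{\frac{n}{2p-n}}\big)t\Big\}.
\end{equation*}

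For existence and uniqueness I would first truncate, replacing $R_c$ by $R_c\wedge N$: the resulting linear equation has bounded coefficients, hence a unique smooth solution $w_N$ on $B_r\times[0,\infty)$ by classical parabolic theory, and since the bounds established below are uniform in $N$, letting $N\to\infty$ produces the solution $w$ (hence $J$); uniqueness for the original problem then follows from the linear structure once these bounds are in hand. The lower bound $w\ge1$ (equivalently $J\le1$) is a maximum principle argument: $\tilde w:=w-1$ vanishes on the parabolic boundary of $B_r\times(0,\infty)$ and satisfies $\partial_t\tilde w-\Delta\tilde w-2(C_1-1)R_c\,\tilde w=2(C_1-1)R_c\ge0$, so $\tilde w\ge0$.

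The upper bound $w\le W_r(t)$ is the core of the argument, and I would derive it by a parabolic De Giorgi--Nash--Moser iteration, following the proof of Theorem~1.1 in \cite{ZZ}. Because $\tilde w=w-1\ge0$ vanishes on $\partial B_r$, one may test the equation for $\tilde w$ against $\tilde w^{q-1}$ and integrate over $B_r$ with no spatial cut-off, obtaining for $q\ge1$
\begin{equation*}
\frac{1}{q}\frac{d}{dt}\int_{B_r}\tilde w^{q}+\frac{4(q-1)}{q^{2}}\int_{B_r}\big|\nabla\tilde w^{q/2}\big|^{2}\le 2(C_1-1)\int_{B_r}R_c\big(\tilde w^{q}+\tilde w^{q-1}\big).
\end{equation*}
The right-hand side is handled by H\"older's inequality, the local Sobolev inequality (\ref{1.4}) of Theorem~\ref{th1.1}, and Young's inequality, which absorbs the top-order term at the cost of factors of the form $\big(C_S^{2}q\,\|R_c\|^*_{p,B_r}\big)^{\frac{2p}{2p-n}}$ — the same mechanism as in (\ref{2.20})--(\ref{2.23}). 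Iterating over the exponents $q=2^{j}$ and over a nested family of time slices, and using volume doubling (Lemma~\ref{l2.2}) together with $\kappa(p,1)\le k$ for $k=k(n,p)$ small (so that, via Remark~\ref{r1.2}, $\|R_c\|^*_{p,B_r}$ is controlled by $k$ up to scale-invariant constants), one reaches a bound of exactly the shape $\sup_{B_r}w(\cdot,t)\le 2\exp\{2(C_1-1)C_2k(1+[2C_2(C_1-1)k]^{\frac{n}{2p-n}})t\}=W_r(t)$, with $C_2=C_2(n,p)$ absorbing the Sobolev and iteration constants; transferring back through $J=w^{1/(1-C_1)}$ gives $J\ge\underline{J_r}(t)$. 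The main obstacle I foresee lies in this last step: pushing the integral curvature term through the iteration while keeping \emph{every} constant uniform in the radius $r\le1$ (using that $C_S=C(n)r$ is proportional to $r$) and in the truncation parameter $N$, and extracting the precise exponential-in-$t$ profile $W_r(t)$ rather than a cruder estimate.
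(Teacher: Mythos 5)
Your proposal is correct and reproduces essentially the argument the paper relies on: the paper does not prove Lemma~\ref{l3.1} itself but invokes it from the proof of Theorem~1.1 in \cite{ZZ}, and the substitution $w=J^{1-C_1}$ (with $\alpha=C_1-1$ chosen precisely so that the $|\nabla J|^2/J$ term cancels, yielding $\partial_t w=\Delta w+2(C_1-1)R_c\,w$), followed by the maximum principle for $w\ge 1$ and a parabolic Moser iteration using the local Sobolev inequality of Theorem~\ref{th1.1} and volume doubling for the upper bound $w\le W_r(t)$, is exactly that argument. One small technical remark: testing against $\tilde w^{q-1}$ introduces the non-homogeneous term $\int_{B_r} R_c\,\tilde w^{q-1}$ which makes the iteration slightly messier; it is cleaner to test against $w^{q-1}$ directly and use the already-established $w\ge 1$ to drop the boundary flux $\int_{\partial B_r}\partial_\nu w\le 0$, which gives the homogeneous estimate
\begin{equation*}
\frac{1}{q}\frac{d}{dt}\int_{B_r} w^{q}+\frac{4(q-1)}{q^{2}}\int_{B_r}\big|\nabla w^{q/2}\big|^{2}\le 2(C_1-1)\int_{B_r} R_c\, w^{q},
\end{equation*}
to which the mechanism of (\ref{2.20})--(\ref{2.23}) applies without modification.
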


The basic idea in our proof is the maximum principle, and to this end, we need to establish the following lemma.
\begin{lemma}\label{l3.2}
	Let $0<J\leq1$ be some function on $M\times[0, \infty)$ and define $$F=J\lvert \nabla \rvert^2+(A+a)f+2(N+b)-2f_t$$ for some constants $A$ and $N$ to be determined later. Then, under the assumptions in Theorem \ref{th1.3}, there holds that
	\begin{equation}\label{3.2}
		\begin{split}
			\Delta F-F_t \geq &\frac{(2-\delta)JF^2}{4n}+\frac{(2-\delta)J(J-2)F\lvert \nabla \rvert^2 }{2n}-\left( \frac{(2-\delta)J(N-a\ln D)}{n}+a\right)F\\
			&+\left\lbrace A+a-Ja+J-3-J\lvert \ln D\rvert+\frac{(2-\delta)J(J-2)(N-a\ln D)}{n}\right\rbrace\lvert \nabla f\rvert^2\\
			&-2\left\langle \nabla f, \nabla F\right\rangle + \frac{(2-\delta)J(2-J)^2\lvert \nabla f\rvert^4 }{4n}-\delta J\lvert \nabla f\rvert^4+F_1+F_2,
		\end{split}	
	\end{equation}
	where
	
	\begin{equation*}
		\begin{split}
			F_1=&
			f\left\lbrace (2-2J)\lvert \nabla f\rvert\lvert \nabla a\rvert+\frac{(2-\delta)J(J-2)(A-a)\lvert \nabla f\rvert^2}{2n}-\frac{(2-\delta)J(A-a)F}{2n}\right\rbrace\\
			&+f\left\lbrace \Delta a+a_t+\frac{(2-\delta)(A-a)(N-a\ln D)J}{n}\right\rbrace
		\end{split}
	\end{equation*}
	and
	\begin{equation*}
		\begin{split}
			F_2=&
			2\Delta b+2a(N+b)+2a_t\ln D-(A+a)(b+a\ln D)-\left( 1+J\lvert \ln D\rvert\right) \lvert\nabla a \rvert^2\\
			&-(2-J)\lvert\nabla b \rvert^2+(N-a\ln D)^2.
		\end{split}
	\end{equation*}
\end{lemma}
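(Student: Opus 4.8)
The plan is to establish (\ref{3.2}) by a direct Bochner-type computation, deferring the substitution of $f_t$ and $\Delta f$ to the very end and absorbing every gradient-of-$J$ term using the equation that $J$ satisfies in Lemma \ref{l3.1}. Throughout, $F=J\left|\nabla f\right|^2+(A+a)f+2(N+b)-2f_t$ as in Theorem \ref{th1.3}. The two preliminary facts I will use are: putting $u=De^{f}$ in (\ref{equ*}) gives
\begin{equation*}
\Delta f-f_t=-\left|\nabla f\right|^2-af-a\log D-b ,
\end{equation*}
and, solving the definition of $F$ for $f_t$,
\begin{equation*}
\Delta f=\tfrac12\left[(J-2)\left|\nabla f\right|^2+(A-a)f+2(N-a\log D)-F\right] ;
\end{equation*}
moreover $f\le0$ because $u\le D$.

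\emph{Bochner step.} Expand $\Delta(J\left|\nabla f\right|^2)-\partial_t(J\left|\nabla f\right|^2)=(\Delta J-J_t)\left|\nabla f\right|^2+2\langle\nabla J,\nabla\left|\nabla f\right|^2\rangle+J\bigl(\Delta\left|\nabla f\right|^2-\partial_t\left|\nabla f\right|^2\bigr)$. Apply Bochner together with $Ric\ge -R_c$ and the split $2\left|D^2f\right|^2=\delta\left|D^2f\right|^2+(2-\delta)\left|D^2f\right|^2\ge\delta\left|D^2f\right|^2+\tfrac{2-\delta}{n}(\Delta f)^2$; this yields the ``good'' term $\tfrac{2-\delta}{n}J(\Delta f)^2$, a reserved term $\delta J\left|D^2f\right|^2$, a term $-2R_cJ\left|\nabla f\right|^2$, and $2J\langle\nabla f,\nabla(\Delta f-f_t)\rangle$. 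The equation for $J$ rewrites $\Delta J-J_t=2R_cJ+5\delta^{-1}\tfrac{\left|\nabla J\right|^2}{J}$, so the $R_c$ terms cancel and a reservoir $5\delta^{-1}\tfrac{\left|\nabla J\right|^2}{J}\left|\nabla f\right|^2$ appears; Cauchy--Schwarz and Young give $2\langle\nabla J,\nabla\left|\nabla f\right|^2\rangle\ge-4\left|\nabla J\right|\left|D^2f\right|\left|\nabla f\right|\ge-\delta J\left|D^2f\right|^2-4\delta^{-1}\tfrac{\left|\nabla J\right|^2}{J}\left|\nabla f\right|^2$, the first piece cancelling the reserved Hessian term and the second consuming $4/5$ of the reservoir. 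The result is
\begin{equation*}
\Delta(J\left|\nabla f\right|^2)-\partial_t(J\left|\nabla f\right|^2)\ \ge\ \tfrac{2-\delta}{n}J(\Delta f)^2+2J\langle\nabla f,\nabla(\Delta f-f_t)\rangle+\delta^{-1}\tfrac{\left|\nabla J\right|^2}{J}\left|\nabla f\right|^2 .
\end{equation*}

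\emph{Remaining blocks and bookkeeping.} Differentiate the other three parts of $F$: $\Delta((A+a)f)-\partial_t((A+a)f)=(A+a)(\Delta f-f_t)+(\Delta a)f+2\langle\nabla a,\nabla f\rangle-a_tf$, then $\Delta(2(N+b))-\partial_t(2(N+b))=2\Delta b-2b_t$, and $\Delta(-2f_t)-\partial_t(-2f_t)=-2\partial_t(\Delta f-f_t)$ expanded through the $f$-equation. Using $\nabla F=\left|\nabla f\right|^2\nabla J+J\nabla\left|\nabla f\right|^2+(A+a)\nabla f+f\nabla a+2\nabla b-2\nabla f_t$, rewrite the $-2J\langle\nabla f,\nabla\left|\nabla f\right|^2\rangle$ hidden inside $2J\langle\nabla f,\nabla(\Delta f-f_t)\rangle$ as $-2\langle\nabla f,\nabla F\rangle$ plus lower order; then all $\langle\nabla f,\nabla f_t\rangle$ and $b_t$ contributions cancel across the blocks, the residual $2\left|\nabla f\right|^2\langle\nabla f,\nabla J\rangle$ is absorbed by the leftover reservoir at the cost of $-\delta J\left|\nabla f\right|^4$, and the $f$-linear terms $a(A+a)f$ from $2af_t$ and from $(A+a)(\Delta f-f_t)$ cancel. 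Now substitute $\Delta f=\tfrac12[(J-2)\left|\nabla f\right|^2+(A-a)f+2(N-a\log D)-F]$ into $\tfrac{2-\delta}{n}J(\Delta f)^2$ and expand; this produces $\tfrac{(2-\delta)JF^2}{4n}$, $\tfrac{(2-\delta)J(2-J)^2\left|\nabla f\right|^4}{4n}$, the $F\left|\nabla f\right|^2$ and $F$ terms, the contribution $\tfrac{(2-\delta)J(J-2)(N-a\log D)}{n}$ to the $\left|\nabla f\right|^2$-coefficient, three of the terms in the bracket of $F_1$, a non-negative term $\tfrac{(2-\delta)J(A-a)^2f^2}{4n}$ which is dropped, and the constant term in $F_2$. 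Finally, Young's inequality on $\langle\nabla a,\nabla f\rangle$, $\log D\,\langle\nabla a,\nabla f\rangle$ and $\langle\nabla b,\nabla f\rangle$ supplies the remaining pieces of the $\left|\nabla f\right|^2$-coefficient and of $F_2$; and --- here $f\le0$ is used --- the $f$-weighted product $(2-2J)f\langle\nabla f,\nabla a\rangle$ is estimated by Cauchy--Schwarz against $f\le0$ as $\ge f(2-2J)\left|\nabla f\right|\left|\nabla a\right|$, the leading term of $F_1$. Collecting all terms carrying a factor $f$ into $F_1$ and the rest into $F_2$ yields (\ref{3.2}).

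\emph{Main obstacle.} The difficulty will lie entirely in the length and the sign bookkeeping rather than in any single idea. The two delicate points will be the choice of the $\delta$-split of the Hessian term, tuned so that both the $\nabla J$ cross term and the quartic $\left|\nabla f\right|^4$ it later generates are exactly covered by the reservoir $5\delta^{-1}\left|\nabla J\right|^2/J$ coming from Lemma \ref{l3.1}, and the careful use of $f\le0$ so that the $f$-weighted gradient products assemble into the stated form of $F_1$ instead of contributing to the curvature-dependent part of the bound.
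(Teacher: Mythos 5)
Your plan matches the paper's own proof step for step: expand $\Delta F-F_t$, apply Bochner with the $\delta/(2-\delta)$ split of the Hessian term, invoke the auxiliary $J$ solving $\Delta J-J_t=2R_cJ+5\delta^{-1}|\nabla J|^2/J$ from Lemma \ref{l3.1} so the $R_c$ cancels and the $|\nabla J|^2/J$ reservoir absorbs both Cauchy--Schwarz penalties (the $2\langle\nabla J,\nabla|\nabla f|^2\rangle$ cross term costing $4/5$ and the $2|\nabla f|^2\langle\nabla f,\nabla J\rangle$ term costing $1/5$ plus $-\delta J|\nabla f|^4$), substitute $\Delta f$ from the definition of $F$ and the PDE, drop the nonnegative $\frac{(2-\delta)J(A-a)^2f^2}{4n}$, and use $f\le0$ for the $(2-2J)f\langle\nabla a,\nabla f\rangle$ term — exactly (3.3)--(3.17) of the paper. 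One remark: your identity $\Delta f=\tfrac12[(J-2)|\nabla f|^2+(A-a)f+2(N-a\log D)-F]$ carries $-F$, which is the correct sign, whereas the paper's (3.16) writes $+F/2$; carrying your version through makes the $(J-2)$ in the $F|\nabla f|^2$ term of (3.2) come out as $(2-J)$, which is what is implicitly used in the Cauchy--Schwarz step (3.24)--(3.25) later on.
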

\begin{proof}[\textbf{Proof}]
	First of all, the equation (\ref{equ*})	is equivalent to
	\begin{equation}\label{3.3}
		\Delta f=f_t-\lvert\nabla f \rvert^2-af-a\ln D-b,
	\end{equation}
	then
	\begin{equation}\label{3.4}
		f_{tt}=\Delta f_t+af_t+a_tf+2\left\langle \nabla f, \nabla f_t\right\rangle+b_t+a_t\ln D.
	\end{equation}
	On the other hand, we have
	\begin{equation}\label{3.5}
		\begin{split}
			\Delta F=&\Delta\left( J\lvert \nabla f\rvert^2+(A+a)f+2(N+b)-2f_t\right)\\
			=&\lvert \nabla J\rvert^2\Delta J+J\Delta\lvert \nabla f\rvert^2+2\left\langle \nabla J,\nabla\lvert \nabla f\rvert^2\right\rangle+(A+a)\Delta f\\
			&+f\Delta a+2\left\langle\nabla a, \nabla f \right\rangle+2\Delta b-2\Delta f_t.
		\end{split}
	\end{equation}
	By Bochner formula, we know that there holds
	\begin{equation}\label{3.6}
		J\Delta\lvert \nabla f\rvert^2\geq2J\left\langle \nabla f, \nabla\Delta f\right\rangle+2J\lvert D^2f\rvert^2-2R_cJ\lvert \nabla f\rvert^2.
	\end{equation}
	We also have
	\begin{equation}\label{3.7}
		\begin{split}
			\left\langle \nabla f, \nabla F\right\rangle=&\left\langle\nabla f, \nabla J \right\rangle\lvert \nabla f\rvert^2+J\left\langle\nabla f, \nabla \lvert \nabla f\rvert^2\right\rangle+f\left\langle \nabla a, \nabla f\right\rangle+(A+a)\lvert \nabla f\rvert^2\\
			&+2\left\langle \nabla b, \nabla f\right\rangle-2\left\langle \nabla f_t, \nabla f\right\rangle
		\end{split}
	\end{equation}
	and
	\begin{equation}\label{3.8}
		aF=aJ\lvert \nabla f\rvert^2+(A+a)af+2(N+b)a-2af_t.
	\end{equation}
	For $F_t$, we have
	\begin{equation}\label{3.9}
		F_t=2J\left\langle \nabla f, \nabla f_t\right\rangle+J_t\lvert \nabla f\rvert^2+(A+a)f+fa_t+2b_t-2f_{tt}.
	\end{equation}
	Combining (\ref{3.4}), (\ref{3.6}), (\ref{3.7}), (\ref{3.8}) and (\ref{3.9}), then we have
	\begin{equation}\label{3.10}
		\begin{split}
			\Delta F-F_t\geq &\left[  2(A+a)-Ja)\right] \lvert \nabla f\rvert^2+\left[(2-2J)f+2-2J\ln D\right]\left\langle \nabla f, \nabla a\right\rangle \\
			& +(4-2J)\left\langle \nabla f, \nabla b\right\rangle +\lvert \nabla J\rvert^2\Delta J+2\left\langle\nabla f, \nabla J \right\rangle\lvert \nabla f\rvert^2+2J\lvert D^2f\rvert^2\\
			&-2R_cJ\lvert \nabla f\rvert^2+2\left\langle \nabla J,\nabla\lvert \nabla f\rvert^2\right\rangle-2\left\langle \nabla f, \nabla F\right\rangle-aF+f\Delta a\\
			&+a_tf+2a_t\ln D+2\Delta b+2(N+b)a-J_t\lvert \nabla f\rvert^2\\
			&-(A+a)\lvert \nabla f\rvert^2-(A+a)b-(A+a)a\ln D.
		\end{split}
	\end{equation}
	By Cauchy-Schwartz inequality, we can see that there hold
	\begin{equation}\label{3.11}
		(2-2J\ln D)\left\langle \nabla f, \nabla a\right\rangle\geq \left(-1-J\lvert \ln D\rvert\right)\left( \lvert \nabla f\rvert^2+\lvert \nabla a\rvert^2\right),
	\end{equation}
	\begin{equation}\label{3.12}
		(4-2J)\left\langle \nabla f, \nabla a\right\rangle\geq -\left(2-J\right)\left( \lvert \nabla f\rvert^2+\lvert \nabla b\rvert^2\right),
	\end{equation}
	\begin{equation}\label{3.13}
		2\left\langle \nabla J,\nabla\lvert \nabla f\rvert^2\right\rangle\geq-\delta J\lvert D^2f\rvert^2-\frac{4\lvert \nabla J\rvert^2\lvert \nabla f\rvert^2 }{\delta J},
	\end{equation}
	\begin{equation}\label{3.14}
		2\left\langle\nabla f, \nabla J \right\rangle\lvert\nabla f\rvert^2\geq-\delta J\lvert\nabla f\rvert^4-\frac{\lvert\nabla J\rvert^2\lvert \nabla f\rvert^2 }{\delta J},
	\end{equation}
	and
	\begin{equation}\label{3.15}
		\lvert D^2f\rvert^2\geq\frac{(\Delta f)^2}{n}.
	\end{equation}
	Combining the definition of $F$ and (\ref{3.3}), we have
	\begin{equation}\label{3.16}
		\Delta f=\frac{F}{2}+\frac{J-2}{2}\lvert \nabla f\rvert^2+\frac{(A-a)f}{2}+N-a\ln D.
	\end{equation}
	Substituting (\ref{3.11}), (\ref{3.12}), (\ref{3.13}), (\ref{3.14}) into (\ref{3.10}), we have
	\begin{equation*}
		\begin{split}
			\Delta F-F_t\geq &\left( A+a+J-Ja-3-J\lvert \ln D\rvert \right)\lvert \nabla f\rvert^2\\
			&+(2-2J)f\lvert\nabla f\rvert\lvert\nabla a\rvert+(2-\delta)J\lvert D^2f\rvert^2-aF\\
			&-2\left\langle \nabla f, \nabla F\right\rangle+f\Delta a+2\Delta b+a_tf+2a_t\ln D \\
			&+2a(N+b)-(b+a\ln D)(A+a)-\left( 1+J\lvert \nabla a\rvert^2\right)\\
			&-(2-J)\lvert \nabla b\rvert^2+\left(\Delta J-2R_cJ-5\delta^{-1}\frac{\lvert \nabla J\rvert^2 }{J}-J_t\right)\lvert \nabla f\rvert^2.
		\end{split}
	\end{equation*}
	Then by Lemma \ref{l3.1}, for $r=1$, we can choose $0<\underline{J}=\underline{J_1}\leq J\leq1$ such that $J$ solves the equation in Lemma \ref{l3.1}. In this situation, we have
	\begin{equation}\label{3.17}
		\begin{split}
			\Delta F-F_t\geq &\left( A+a+J-Ja-3-J\lvert\ln D\rvert \right)\lvert\nabla f\rvert^2\\
			&+(2-2J)f\lvert \nabla f\rvert\lvert\nabla a\rvert+(2-\delta)J\lvert D^2f\rvert^2-aF\\
			&-2\left\langle \nabla f, \nabla F\right\rangle+f\Delta a+2\Delta b+a_tf+2a_t\ln D\\
			&+2a(N+b)-(b+a\ln D)(A+a)\\
			&-\left( 1+J\lvert \nabla a\rvert^2\right)-(2-J)\lvert \nabla b\rvert^2.
		\end{split}
	\end{equation}
	Substituting (\ref{3.15}) and (\ref{3.16}) into (\ref{3.17}) and noting $\left( \frac{(A-a)f}{2}\right) ^2\geq0$, we obtain (\ref{3.2}).
\end{proof}

Now we can give:
\begin{proof}[\textbf{Proof of Theorem \ref{th1.3}}]
	Since on $B(x,1)\times(0,\infty)$, $a,b,\lvert\nabla a\rvert,\lvert\nabla b\rvert$ and $\lvert a_t \rvert$ are bounded,  $\Delta b$ is bounded from below and $0<J\leq1$, then there exists a constant $N\leq0$ such that $F_2\geq0$ and $N<a\ln D$ on $B(x,1)\times(0,\infty)$. Moreover, we choose a constant $A$ such that $A>(a)^+$ on $B(x,1)\times(0,\infty)$.
	For this situation, from (\ref{3.2}) we have
	\begin{equation*}
		\begin{split}
			\Delta F-F_t\geq &\frac{(2-\delta)JF^2}{4n}+\frac{(2-\delta)J(J-2)F\lvert \nabla f\rvert^2 }{2n}-\left( \frac{(2-\delta)J(N-a\ln D)}{n}+a\right)F\\
			&+\left\lbrace A+a-Ja+J-3-J\lvert \ln D\rvert+\frac{(2-\delta)J(J-2)(N-a\ln D)}{n}\right\rbrace \lvert \nabla f\rvert^2\\
			&-2\left\langle \nabla f, \nabla F\right\rangle +\frac{(2-\delta)J(2-J)^2\lvert \nabla f\rvert^4 }{4n}-\delta J\lvert\nabla f\rvert^4+F_1.
		\end{split}	
	\end{equation*}
	For $N$, furthermore we can require that
	$$ A+a-Ja+J-3-J\lvert \ln D\rvert+\frac{(2-\delta)J(J-2)(N-a\ln D)}{n}\geq0$$
	since we have $J-2<0<J$, $N-a\ln D<0$ and $A+a-3\geq0$. In this situation, there holds
	\begin{equation*}
		\begin{split}
			\Delta F-F_t\geq &\frac{(2-\delta)JF^2}{4n}+\frac{(2-\delta)J(J-2)F\lvert \nabla f\rvert^2 }{2n}-\left( \frac{(2-\delta)J(N-a\ln D)}{n}+a\right)F\\
			&-2\left\langle \nabla f, \nabla F\right\rangle +\left( \frac{(2-\delta)J(2-J)^2 }{4n}-\delta J\right) \lvert \nabla f\rvert^4+F_1.
		\end{split}	
	\end{equation*}
	Moreover, since $0<\delta\leq\frac{2}{1+4n}$, then
	$$\frac{(2-\delta)J(2-J)^2 }{4n}-\delta J\geq0.$$
	So, finally we have
	\begin{equation}\label{3.19}
		\begin{split}
			\Delta F-F_t\geq&\frac{(2-\delta)JF^2}{4n}+\frac{(2-\delta)J(J-2)F\lvert \nabla f\rvert^2 }{2n}\\
			&-\left( \frac{(2-\delta)J(N-a\ln D)}{n}+a\right)F -2\left\langle \nabla f, \nabla F\right\rangle+F_1.
		\end{split}	
	\end{equation}
	Next we can choose a cut-off function $\phi$ on $B_1$ as in Lemma \ref{l1.1}. Then for any $T>0$, we assume $(x_0, t_0)$ is the maximum point of $t\phi^2F$ on $B_1\times(0,T]$. It follows from maximum principle that there holds that at $(x_0, t_0)$
	\begin{equation}\label{3.20}
		\nabla\left( \phi^2F\right)=0,\quad\quad \Delta\left(t\phi^2F \right)\leq0,\quad\quad\left( t\phi^2F\right)_t\geq0,
	\end{equation}
	then we also obtain $\nabla F=-\frac{2F\nabla\phi}{\phi}$ from the first identity.
	
	From now on, all the calculations are considered at the point $(x_0, t_0)$ and for simplicity, we omit it. By maximum principle and direct calculations, we have
	\begin{equation}\label{3.21}
		\begin{split}
			&t^2\phi^4\left( \Delta-\frac{\partial}{\partial t}\right)F+2t^2\phi^3F\Delta\phi+2t^2\phi^2F\lvert\nabla\phi \rvert^2-8t^2\phi^2F\lvert\nabla\phi \rvert^2-t\phi^4F\\
			=&t\phi^2\left( \Delta-\frac{\partial}{\partial t}\right)\left(t\phi^2F \right)\leq0.
		\end{split}
	\end{equation}
	Substituting (\ref{3.19}) into (\ref{3.21}), we deduce
	\begin{equation}\label{3.22}
		\begin{split}
			&\frac{(2-\delta)J(t\phi^2F)^2}{4n}+\frac{(2-\delta)J(J-2)t^2\phi^4F\lvert \nabla f\rvert^2 }{2n}-\left( \frac{(2-\delta)J(N-a\ln D)}{n}+a\right)t^2\phi^4F\\
			&+4t^2\phi^3F\left\langle \nabla f, \nabla\phi\right\rangle+2t^2\phi^3F\Delta\phi -6t^2\phi^2F\lvert\nabla\phi \rvert^2-t\phi^4F+t^2\phi^4F_1\leq0.
		\end{split}
	\end{equation}
	\noindent\textbf{Case I.} \, $t^2\phi^4F_1\leq0$, i.e. $t\phi^2F_1\leq0$.
	
	Since $f\leq0$, then we have
	\begin{equation*}
		\begin{split}
			&t\phi^2\left[ (2-2J)\lvert \nabla f\rvert\lvert \nabla a\rvert+\frac{(2-\delta)J(J-2)(A-a)\lvert \nabla f\rvert^2}{2n}-\frac{(2-\delta)J(A-a)F}{2n}\right] \\
			&+t\phi^2\left[ \Delta a+a_t+\frac{(2-\delta)(A-a)(N-a\ln D)J}{n}\right] \geq0.
		\end{split}	
	\end{equation*}
	Noting $N-a\ln D<0$, then we have
	\begin{equation*}
		(2-2J)\lvert \nabla f\rvert\lvert \nabla a\rvert+\frac{(2-\delta)J(J-2)(A-a)\lvert\nabla f\rvert^2}{2n}-\frac{(2-\delta)J(A-a)F}{2n}
		+\Delta a+a_t\geq0.
	\end{equation*}
	By using Cauchy-Schwartz inequality we can infer that there holds
	\begin{equation*}
		(2-2J)\lvert\nabla f\rvert\lvert \nabla a\rvert\leq\frac{(2-\delta)J(2-J)(A-a)\lvert\nabla f\rvert^2}{2n}+\frac{2n(1-J)^2\lvert \nabla a\rvert^2}{(2-\delta)J(2-J)(A-a)},
	\end{equation*}
	and this leads to that
	\begin{equation*}
		\frac{(2-\delta)J(A-a)F}{2n}\leq\frac{2n(1-J)^2\lvert \nabla a\rvert^2}{(2-\delta)J(A-a)}+\Delta a+a_t.
	\end{equation*}
	Since $0<\underline{J}\leq J\leq1$ and $$\underline{J}\lvert \nabla f\rvert^2+(A+a)f+2(N+b)-2f_t\leq F,$$
	at the maximum point $(x_0, t_0)$ we have
	\begin{equation*}
		t\phi^2F\leq\frac{2nt\phi^2}{(2-\delta)\underline{J}\left( A-(a)^+\right) }\left\lbrace\frac{2n\left( 1-\underline{J}\right) ^2\left( \lvert \nabla a\rvert^2\right) ^+}{(2-\delta)\underline{J}\left(A-(a)^+\right)}+\left( \Delta a+a_t\right)^+  \right\rbrace.
	\end{equation*}
	Then we obtain on $B\left( x,\frac{1}{2}\right) \times(0,\infty)$
	\begin{equation}\label{3.18}
		\begin{split}
			&\underline{J}\lvert \nabla f\rvert^2+(A+a)f+2(N+b)-2f_t\\
			&\leq\frac{2n}{(2-\delta)\underline{J}\left( A-(a)^+\right) }\left\lbrace\frac{2n\left( 1-\underline{J}\right) ^2\left( \lvert \nabla a\rvert^2\right) ^+}{(2-\delta)\underline{J}\left(A-(a)^+\right)}+\left( \Delta a+a_t\right)^+ \right\rbrace.
		\end{split}
	\end{equation}
	
	\noindent\textbf{Case II.} \, $t^2\phi^4F_1\geq0$.

	Since $\lvert \Delta\phi\rvert \leq C$, $\lvert\nabla\phi \rvert^2\leq C$, $(N-a\ln D)^+<0$ and $\phi^2\geq\phi^3$, then, by (\ref{3.22}) we can obtain
	\begin{equation*}
		\begin{split}
			&\frac{(2-\delta)J(t\phi^2F)^2}{4n}+\frac{(2-\delta)J(J-2)t^2\phi^4F\lvert \nabla f\rvert^2 }{2n}-(a)^+t^2\phi^4F\\
			&-4t^2\phi^3FC^{\frac{1}{2}}\lvert \nabla f\rvert -2Ct^2\phi^2F-6Ct^2\phi^2F-t\phi^4F\leq0,
		\end{split}
	\end{equation*}
	i.e.
	\begin{equation}\label{3.23}
		\begin{split}
			&\frac{(2-\delta)J(t\phi^2F)^2}{4n}+\frac{(2-\delta)J(J-2)t^2\phi^4F\lvert\nabla f\rvert^2 }{2n}-(a)^+t^2\phi^4F\\
			&-4t^2\phi^3FC^{\frac{1}{2}}\lvert \nabla f\rvert -8Ct^2\phi^2F-t\phi^4F\leq0.
		\end{split}
	\end{equation}
	By Cauchy-Schwartz inequality, we have
	\begin{equation}\label{3.24}
		\frac{(2-\delta)J(2-J)\phi^4\lvert \nabla f\rvert^2 t^2F}{2n}+\frac{8nC\phi^2t^2F}{(2-\delta)J(2-J)}\geq4C^\frac{1}{2}\phi^3\lvert\nabla f\rvert t^2F.
	\end{equation}
	Substituting (\ref{3.24}) into (\ref{3.23}) gives
	\begin{equation}\label{3.25}
		\frac{(2-\delta)J(t\phi^2F)^2}{4n}-(a)^+t^2\phi^4F-\frac{8nCt^2\phi^2F}{(2-\delta)J(2-J)}-8Ct^2\phi^2F-t\phi^4F\leq0.
	\end{equation}
	Without loss of generality, we may assume $F>0$, since otherwise the result is trivial. Then dividing by $t\phi^2F$ on both sides of (\ref{3.25}) yields
	\begin{equation}\label{3.26}
		\frac{(2-\delta)Jt\phi^2F}{4n}-(a)^+t\phi^2-\frac{8nCt}{(2-\delta)J(2-J)}-8Ct-\phi^2\leq0,
	\end{equation}
	i.e.
	\begin{equation}\label{3.27}
		\begin{split}
			t\phi^2F&\leq\frac{4n}{(2-\delta)J}\left\lbrace (a)^+t\phi^2+\frac{8nCt}{(2-\delta)J(2-J)}+8Ct+\phi^2\right\rbrace\\
			&\leq\frac{4n}{(2-\delta)\underline{J}}\left\lbrace (a)^+T\phi^2+\frac{8nCT}{(2-\delta)\underline{J}}+8CT+\phi^2\right\rbrace.
		\end{split}
	\end{equation}
	Since for any $0<t\leq T$, we have
	$$tF\lvert_{B_{\frac{1}{2}}\times(0, T]}=t\phi^2F\lvert _{B_{\frac{1}{2}}\times(0, T]}\leq\sup\limits_{B_1\times(0, T]}t\phi^2F=t\phi^2F\lvert _{B_1\times(0, T]}(x_0, t_0),$$
	i.e., on $B_{\frac{1}{2}}\times(0, T]$, by (\ref{3.27}) there holds true that
	\begin{equation}\label{3.28}
		TF\leq\frac{4n}{(2-\delta)\underline{J}}\left\lbrace (a)^+T+\frac{8nCT}{(2-\delta)\underline{J}}+8CT+1\right\rbrace.
	\end{equation}
	Finally we have
	\begin{equation}\label{3.29}
		\underline{J}\lvert\nabla f\rvert^2+(A+a)f+2(N+b)-2f_t\leq F\leq\frac{4n}{(2-\delta)\underline{J}}\left\lbrace (a)^++\frac{8nC}{(2-\delta)\underline{J}}+8C+\frac{1}{T}\right\rbrace.
	\end{equation}
	Now combining (\ref{3.18}) and (\ref{3.29}) gives (\ref{1.7}), but we need to choose a appropriate $k(n, p)$ such that Lemma \ref{l1.1} and Lemma \ref{3.1} are valid. Hence we accomplish the proof of Theorem \ref{th1.3} since $T$ is arbitrary.
\end{proof}

\begin{remark}\label{r3.1}
If $a(x, t)$ is a constant, e.g., $a\equiv A$, then we can see that $F_2\equiv0$, hence in this situation it's unnecessary to assume $u$ has a upper bound by letting $D\equiv1$.
\end{remark}

\noindent {\it\textbf{Acknowledgements}}: The authors are supported partially by NSFC grant (No.11731001), partially by NSFC grant (No.11971400) and Guangdong Basic and Applied Basic Research Foundation Grant (No. 2020A1515011019). The authors are grateful to Professor Bing Wang for many constructive suggestions.

\bibliographystyle{amsalpha}
	
\end{document}